\DeclareMathOperator\Span{span}
\DeclareMathOperator\spect{Spect}
\theoremstyle{definition}
\begin{document}

\title{Fast computation and characterization of forced response surfaces via spectral submanifolds and parameter continuation}
% Nonlinear Characterization of Mechanical Systems via Forced Response Surface and Damped Backbone Curves: Spectral Submanifolds and Parameter Continuation
\titlerunning{Fast computation and characterization of forced response surface}

\author{Mingwu Li         \and
        Shobhit Jain      \and
        George Haller
}

%\authorrunning{Short form of author list} % if too long for running head

\institute{M. Li (\Letter) \at
              Department of Mechanics and Aerospace Engineering\\ Southern University of Science and Technology \\
              Shenzhen, 518055, China\\
              \email{limw@sustech.edu.cn} 
           \and
           S. Jain \at
              Delft Institute of Applied Mathematics, TU Delft\\
              Mekelweg 4, 2628CD, Delft, The Netherlands
           \and
           G. Haller \at
              Institute for Mechanical Systems, ETH Z\"{u}rich \\
              Leonhardstrasse 21, 8092 Zurich, Switzerland          
}

\date{Received: date / Accepted: date}

\maketitle

\begin{sloppypar}
\begin{abstract}
     For mechanical systems subject to periodic excitation, forced response curves (FRCs) depict the relationship between the amplitude of the periodic response and the forcing frequency. For nonlinear systems, this functional relationship is different for different forcing amplitudes. Forced response surfaces (FRSs), which relate the response amplitude to both forcing frequency and forcing amplitude, are then required in such settings. Yet, FRSs have been rarely computed in the literature due to the higher numerical effort they require. 
     Here, we use spectral submanifolds (SSMs) to construct reduced-order models (ROMs) for high-dimensional mechanical systems and then use multidimensional manifold continuation of fixed points of the SSM-based ROMs to efficiently extract the FRSs.
     Ridges and trenches in an FRS characterize the main features of the forced response. We show how to extract these ridges and trenches directly without computing the FRS via reduced optimization problems on the ROMs. We demonstrate the effectiveness and efficiency of the proposed approach by calculating the FRSs and their ridges and trenches for a plate with a 1:1 internal resonance and for a shallow shell with a 1:2 internal resonance.
\end{abstract}
\end{sloppypar}

\keywords{Invariant manifolds \and Reduced-order models \and  Spectral submanifolds \and Backbone curves \and Forced response curves}

\section{Introduction}

\begin{sloppypar} 
Forced response curves (FRCs) are important constructs for understanding forced nonlinear response in mechanical systems. For a mechanical system subject to periodic excitation at a given forcing amplitude, an FRC depicts the functional relationship between the amplitude of the periodic response and the forcing frequency. FRCs offer various practical insights for nonlinear systems, especially for those with internal resonances~\cite{nayfeh1988undesirable,balachandran1991observations,antonio2012frequency}.

For nonlinear systems, FRCs constructed at different forcing amplitudes can differ even qualitatively, let alone quantitatively. Indeed, the FRCs of a damped nonlinear system subject to periodic forcing at low amplitudes resemble the linearized periodic response, where we obtain a one-to-one relationship between the response amplitude and the forcing frequency. However, for moderate or high forcing levels, the FRC can deviate qualitatively from its linearized counterpart, featuring even multiple periodic solutions for a given forcing frequency ~\cite{Nayfeh1995}. To account for such nonlinear dependence of the FRCs on forcing amplitudes,  forced response surfaces (FRSs) are of great significance. 

An FRS is a two-dimensional surface that depicts the relationship of the response amplitude to both the forcing frequency and the forcing amplitude. Hence, an FRS provides a complete dynamic characterization of nonlinear systems subject to periodic forcing.  In this work, we exploit spectral submanifolds (SSMs) and parameter continuation for the efficient computation of the FRSs in forced, damped, nonlinear mechanical systems that may also be internally resonant. 

\subsection{Forced response surface}

An FRS can be interpreted as a one-parameter family of FRCs with varying forcing amplitudes. However, constructing an FRS from a collection of FRCs is challenging for the following two reasons. First, the prediction of any potential isolas in an FRC is difficult. An isola is an isolated branch of periodic orbits that is detached from the main branch~\cite{ponsioen2019analytic,2012On}. An FRC is typically obtained via one-dimensional parameter continuation that faces difficulties in locating isolas. This is because continuation along an isola requires initial conditions close to the detached solution, whose location is a priori unknown~\cite{ponsioen2019analytic}. The second challenge arises in sampling the forcing amplitudes so that all nontrivial features in the FRS can be reconstructed. Such an amplitude sampling is problem-dependent and may require additional tuning.

The above two challenges can be overcome by directly computing the FRS via two-dimensional parameter continuation. As any isola merges with the main branch for sufficiently large forcing amplitudes~\cite{ponsioen2019analytic}, the birth and disappearance of isolas can be automatically detected by computing the FRS as a single two-dimensional object~\cite{2012On}. Furthermore, the sampling of forcing amplitudes is also not required, as we detail below. 

An FRS is a two-dimensional surface that covers the periodic response amplitude under combined variations in the forcing frequency and amplitude. Standard multi-dimensional continuation algorithms, such as the Henderson algorithm~\cite{henderson2002multiple}, are useful for FRS computation. In the Henderson algorithm~\cite{henderson2002multiple}, any surface is approximated by a piecewise polyhedral tessellation. In our setting, this avoids any manual sampling of the forcing amplitude. This algorithm has been implemented in the software packages \textsc{multifario}~\cite{Multifario} and \textsc{coco}~\cite{COCO}, and has recently been extended to adaptive boundary-value problems~\cite{dankowicz2020multidimensional}.  Indeed, Henderson's algorithm has also been used to extract the FRS of coupled oscillators~\cite{2021Topology}.

% high-dimensionality - SSM reduction
For an efficient extraction of the FRS, fast computation of periodic orbits is necessary. Periodic orbits of low-dimensional nonlinear systems can be obtained via various numerical methods such as numerical integration, shooting methods~\cite{keller2018numerical,coco-shoot}, collocation schemes~\cite{dankowicz2013recipes} and harmonic balance techniques~\cite{krack2019harmonic}. However, the computational costs of these methods are prohibitive for high-dimensional systems such as finite element (FE) models~\cite{jain2022compute,part-i}. To reduce this computational cost, reduced-order models are paramount.

\subsection{Nonlinear model reduction via SSMs}
The recent theory of spectral submanifolds~\cite{haller2016nonlinear} (SSM) has enabled rigorous model reduction of nonlinear mechanical systems. SSMs are invariant manifolds that serve as unique non-linear continuations of modal subspaces for damped nonlinear systems. Furthermore, SSMs are guaranteed to exist when appropriate non-resonance conditions are satisfied on the eigenvalues of the linearization~\cite{haller2016nonlinear}. With SSM reduction, periodic orbits of high-dimensional systems appear as fixed points of low-dimensional SSM-based reduced-order models (ROMs)~\cite{breunung2018explicit,jain2022compute,part-i}. In particular, analytic prediction of FRCs via two-dimensional SSM-based ROMs is possible if the full system admits no internal resonances~\cite{jain2022compute}. In this work, we further demonstrate that the FRS associated with two-dimensional SSMs can also be obtained analytically. For systems with internal resonances, higher-dimensional SSMs are relevant for model reduction~\cite{part-i,part-ii} and the reduced dynamics on such SSMs can again be used to compute the FRS via the aforementioned multi-dimensional continuation algorithms.

% further speedup - ridges and trenches,
% link to damped backbone curves - relations and extensions - extended damped backbone curves 
As we will discuss, the ridges and trenches of an FRS characterize the main features of the forced response. In fact, a ridge generalizes the notion of a damped backbone curve, which is obtained by connecting the points of maximal response amplitude in the FRCs at various forcing amplitudes~\cite{breunung2018explicit}. For weakly damped systems, this damped backbone curve can be approximated via the force appropriation method~\cite{PEETERS2011,Peeters2011a} or the resonance decay method~\cite{szalai2017nonlinear,breunung2018explicit}. However, these procedures lose their validity for systems with moderate damping or internal resonances. Computing the FRS, on the other hand, is more generally valid for the nonlinear characterization of mechanical systems. 

As the ridges and trenches of an FRS delineate the main features of interest in the forced response, it is natural to ask if we can extract them directly without computing the entire FRS. Indeed, this is possible by formulating appropriate optimization problems for periodic orbits and then solving them via successive continuation techniques~\cite{kernevez1987optimization,li2018staged,li2020optimization}, as we will demonstrate. Computing the ridges and trenches would provide a quick characterization of the FRS as continuation along these curves will be faster than that along the two-dimensional FRS. More importantly, we will show that SSM-based model reduction will further reduce the optimization problem for periodic orbits to an optimization problem for fixed points, which is essential for fast extraction of ridges and trenches of the FRSs of high-dimensional systems.

% organization of the rest of the paper
The remainder of this paper is organized as follows. In the next section, we discuss the computation of the FRS and formulate the optimization problems that define the ridges and trenches of the FRS. We then review the SSM theory and show how SSM-based ROMs can be used for FRS computation. Next, we use SSM-based ROMs to simplify the optimization problems for computing ridges and trenches in the FRS. We further present a solution method for these simplified optimization problems via parameter continuation. Finally, we demonstrate the effectiveness of our procedure on various numerical examples before drawing conclusions.

\end{sloppypar}

%Another useful approach for the nonlinear characterization of forced response \cite{Detroux2015}

\section{Problem Formulation}

\subsection{A motivating example}
Consider a harmonically excited linear oscillator given as 
\begin{equation}
\label{eq:lin-osci}
    \ddot{x}+2\zeta\dot{x}+x=\epsilon\cos\Omega t,
\end{equation}
where $\zeta\in(0,1/\sqrt{2}]$ is a fixed damping coefficient, while the forcing amplitude $\epsilon$ and the forcing frequency $\Omega$ are free to change. This linear system admits a periodic solution in the form $x(t)=C\cos(\Omega t-\theta)$, where $C>0$ is the amplitude of the periodic response given as
\begin{equation}
\label{eq:C}
    C(\Omega,\epsilon) = \frac{\epsilon}{\sqrt{(1-\Omega^2)^2+4\zeta^2\Omega^2}},
\end{equation}
and $\theta$ is the phase lag with respect to the forcing. The expression~\eqref{eq:C}  for $C(\Omega,\epsilon)$ characterizes the forced response surface (FRS) of the linear oscillator under variations in $(\Omega,\epsilon)$. A visualization of this surface with $\zeta=0.1$ is shown in Fig.~\ref{fig:FRS-lin}, where we see that there is a ridge on the surface.

\begin{figure}[!ht]
\centering
\includegraphics[width=.45\textwidth]{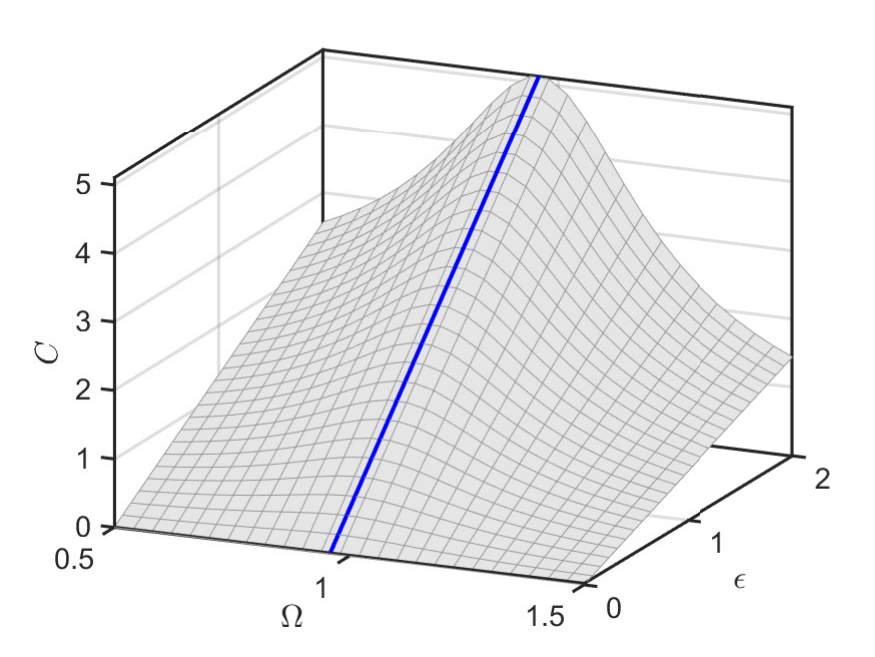}
\includegraphics[width=.45\textwidth]{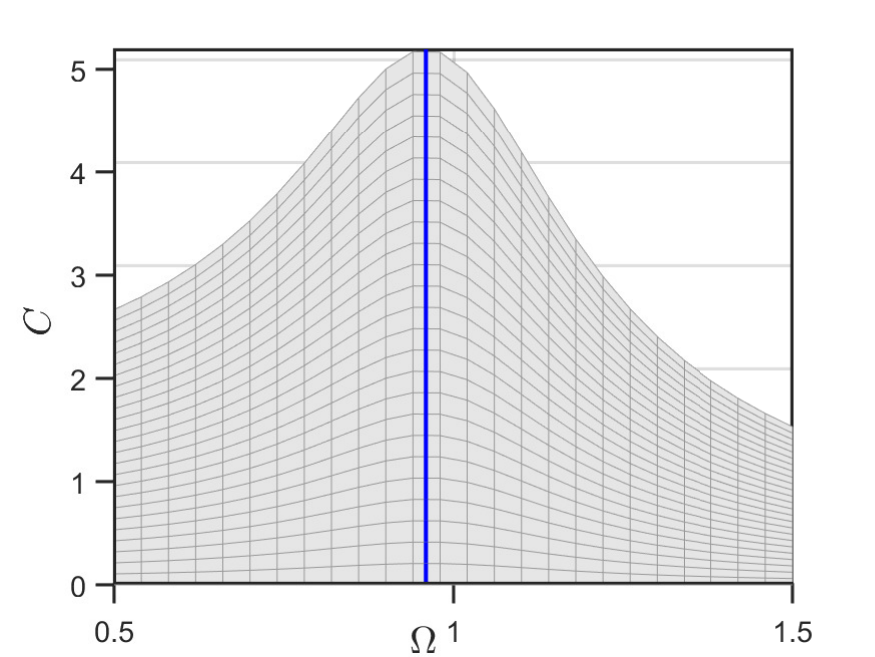}
\caption{\small Forced response surface of the harmonically forced linear oscillator~\eqref{eq:lin-osci} (upper panel) and its projection onto the plane $(\Omega,C)$ (lower panel). Here the surface plot is based on the explicit expression for $C$ in~\eqref{eq:C} with $\zeta=0.1$, the blue line is the ridge of the surface (based on~\eqref{eq:Cridge}), and the gray lines on the surface represent response curves with fixed $\Omega$ or $\epsilon$.}
\label{fig:FRS-lin}
\end{figure}

The ridge in Fig.~\ref{fig:FRS-lin} is a curve connecting the local maxima of the forced response curves (FRCs) under the variation in the forcing amplitude $\epsilon$. In particular, taking a section in the graph of $C$ along a given amplitude $\epsilon=\epsilon_0$, we obtain the FRC associated to the forcing amplitude $\epsilon_o$. In Fig~\ref{fig:FRS-lin}, the gray curves represent the FRCs at various values of $\epsilon$. These FRCs intersect with the blue curve (ridge)  at the maxima of the response amplitude with respect to the forcing frequency $\Omega$. Hence, these local maxima on the FRCs satisfy the relationship
\begin{equation}
\label{eq:Cridge}
    \frac{\partial C(\Omega,\epsilon_o)}{\partial\Omega}=0\implies \Omega^\ast=\sqrt{1-2\zeta^2}.
\end{equation}
The ridge plotted as the blue line in Fig.~\ref{fig:FRS-lin} connects these local maxima for various forcing amplitudes and is given by the expression $C(\Omega^\ast,\epsilon)$.

The projection of the ridge onto the plane  $(\Omega,C)$ provides the damped backbone curve. As seen in the lower panel of Fig.~\ref{fig:FRS-lin}, the damped backbone curve is a straight line perpendicular to the $\Omega$ axis because the system is linear. We also observe that the damped backbone curve is shifted from the conservative backbone curve given by $\Omega=1$. This shift is controlled by the damping coefficient $\zeta$ according to the relation~\eqref{eq:Cridge}. When nonlinearity is added to the system, the backbone curves are expected to change shape. In fact, these nonlinear behaviors can be very sensitive to the damping coefficient, as observed in ref.~\cite{renson2016numerical}, where the hardening-type backbone curve associated with a conservative nonlinear system becomes a softening-type curve when sufficiently large damping is added to the system.  Thus, it is important to compute the FRS and its ridges in damped nonlinear systems for an appropriate characterization of their forced response.

\subsection{High-dimensional nonlinear systems}

Now we consider a more general setup for a nonlinear mechanical system as
\begin{equation} 
\label{eq:eom-2nd}
\boldsymbol{M}\ddot{\boldsymbol{x}}+\boldsymbol{C}\dot{\boldsymbol{x}}+\boldsymbol{K}\boldsymbol{x}+\boldsymbol{f}(\boldsymbol{x},\dot{\boldsymbol{x}})=\epsilon \boldsymbol{f}^{\mathrm{ext}}(\Omega t),\quad 0\leq\epsilon\ll1
\end{equation}
where $\boldsymbol{x}\in\mathbb{R}^n$ is the generalized displacement vector; $\boldsymbol{M},\boldsymbol{C},\boldsymbol{K}\in\mathbb{R}^{n\times n}$ are the mass, damping and stiffness matrices; $\boldsymbol{f}(\boldsymbol{x},\dot{\boldsymbol{x}})$ is a $C^r$ smooth nonlinear function that satisfies
$\boldsymbol{f}(\boldsymbol{x},\dot{\boldsymbol{x}})\sim \mathcal{O}(|\boldsymbol{x}|^2,|\boldsymbol{x}||\dot{\boldsymbol{x}}|,|\dot{\boldsymbol{x}}|^2)$; and $\epsilon \boldsymbol{f}^{\mathrm{ext}}(\Omega t)$ denotes external harmonic excitation. %Here we have $n\gg1$, namely, the mechanical system~\eqref{eq:eom-2nd} is of high dimension.

Let $\boldsymbol{z}=(\boldsymbol{x},\dot{\boldsymbol{x}})$ be the state vector of the system, the equations of motion~\eqref{eq:eom-2nd} can be transformed into a first-order system as below
\begin{equation}
\label{eq:full-first}
\boldsymbol{B}\dot{\boldsymbol{z}}	=\boldsymbol{A}\boldsymbol{z}+\boldsymbol{F}(\boldsymbol{z})+\epsilon\boldsymbol{F}^{\mathrm{ext}}({\Omega t}),
\end{equation}
where the choice for the coefficient matrices $\boldsymbol{B}$ and $\boldsymbol{A}$, and the vector-valued functions $\boldsymbol{F}$ and $\boldsymbol{F}^\mathrm{ext}$ is not unique; specific expressions can be found in~\cite{jain2022compute,part-i}. 

Similarly to the motivating example, we are interested in the periodic response of system~\eqref{eq:full-first}. In particular, we seek $\boldsymbol{z}(t)$ for $t\in[0,T]$ that satisfies the periodic boundary conditions (PBCs) 
\begin{equation}
\label{eq:pbc}
    \boldsymbol{z}(0)-\boldsymbol{z}(T)=\boldsymbol{0},
\end{equation}
where $T=2k\pi/\Omega$ is the time period of the periodic response with $k\in\mathbb{N}$ and $k>1$ defines a subharmonic response.

%\begin{sloppypar}    
In the motivating example, we have a single-degree-of-freedom oscillator and it is natural to use its amplitude to represent the response of the system. However, for the high-dimensional system~\eqref{eq:full-first}, we need an appropriate functional to quantify a response amplitude associated with the entire system. Here, we consider two such functionals that are based on two commonly used norms. Let $\mathcal{I}\subset\{1,\cdots,2n\}$ be a set of indices such that the amplitude of components $\boldsymbol{z}_\mathcal{I}$ needs to be optimized. Then, a response amplitude $\mathcal{A}_{\mathcal{L}^2}$ can be defined as
\begin{equation}
\label{eq:al2}
    \mathcal{A}_{\mathcal{L}^2}(\boldsymbol{z}(t))=\sqrt{\frac{1}{T}\int_0^T \boldsymbol{z}_\mathcal{I}^\ast(t)\boldsymbol{Q}\boldsymbol{z}_\mathcal{I}(t)\mathrm{d}t},
\end{equation}
where $\boldsymbol{Q}$ is an appropriately defined weight matrix. For instance, $\mathcal{A}_{\mathcal{L}^2}(\boldsymbol{z}(t))$ could represent a time-averaged kinetic energy of the system with appropriate choices for $\mathcal{I}$ and $\boldsymbol{Q}$. In the special case that $\mathcal{I}$ has only one element, namely $\mathcal{I}=\mathrm{opt}\in\{1,\cdots,2n\}$, we also consider the amplitude of the periodic signal for the component `opt' of the state $\boldsymbol{z}$ as
\begin{equation}
\label{eq:ainf}
    \mathcal{A}_{\mathcal{L}^\infty}(\boldsymbol{z}(t))=\max_{0\leq t\leq T} |z_\mathrm{opt}(t)|.
\end{equation}

%\textcolor{red}{I suggest simplifying the notation as $\mathcal{A}_{\mathcal{L}^2} \to \mathcal{A}_{\mathcal{I}}$ and $ \mathcal{A}_{\mathcal{L}^\infty}\to \mathcal{A}_{\mathrm{opt}}$}
%\begin{remark}
%The $\mathcal{L}^2$ norm-based amplitude is singular when it is zero, which corresponds to a zero-forcing amplitude ($\epsilon=0$). In practical computations, we set the lower bound of $\epsilon$ to be some small positive numbers. The limitation above is then not an issue as the system's response is linear in the limit $\epsilon\to0$ and we are more concerned with the response when $\epsilon$ is away from zero.
%\end{remark}

%\begin{remark}\label{rmk2}
%The $\mathcal{L}^\infty$ norm-based amplitude is non-smooth. So, the gradient of this functional is not well-defined. To resolve this issue, we consider $z_\mathrm{opt}(t)$ as an optimization objective directly and take $t\in[0, T]$ as a design variable that is to be determined, as detailed later.
%\end{remark}

An FRS is a two-dimensional surface in the space $(\mathcal{A}_{\mathcal{L}^2},\Omega,\epsilon)$ or $(\mathcal{A}_{\mathcal{L}^\infty},\Omega,\epsilon)$. Each point on the manifold is a periodic orbit of~\eqref{eq:full-first}. Since we generally do not have analytical solutions such as~\eqref{eq:C}, we need to use numerical continuation to compute the FRS via atlas algorithms. In particular, we discretize a periodic orbit using a collocation mesh and this mesh is allowed to adapt under the variations in $\epsilon$ and $\Omega$~\cite{dankowicz2013recipes}. Multidimensional manifold continuation methods for such an adaptive boundary-value problem have become available very recently~\cite{dankowicz2020multidimensional}. In principle, one can directly apply the algorithm in~\cite{dankowicz2020multidimensional} for FRS computation. However, this is computationally expensive for high-dimensional problems. 

% \textcolor{red}{what do we do instead to alleviate this computational burden?}%Interested readers can refer to~\cite{dankowicz2013recipes,dankowicz2020multidimensional} for more details on the multidimensional manifold continuation.
%As we have seen in the motivating example, the ridges (and trenches) on an FRS characterize the main features of the FRS. Therefore, one may simply locate the ridges and trenches directly without computing the FRS. 

A characterization of the FRS via its ridges and trenches is computationally efficient relative to computation of the entire FRS. We recall that the ridges and trenches on the FRS are curves of extrema of a one-parameter family of FRCs under the variation in $\epsilon$. We now define optimization problems to locate the ridges and trenches on an FRS directly. 

Based on the two types of functional characterizing the response amplitude, we consider the following two \emph{dynamic} optimization problems to locate ridges and trenches on the FRS: 
\begin{problem}\label{P1}
For any $\epsilon\in[\epsilon_\mathrm{lb},\epsilon_\mathrm{ub}]$, find $\Omega\in[\Omega_\mathrm{lb},\Omega_\mathrm{ub}]$ that renders $A_{\mathcal{L}^2}(\boldsymbol{z}(t))$ stationary under the constraints that ODEs~\eqref{eq:full-first} and PBCs~\eqref{eq:pbc} are satisfied. Here `lb' and `ub' denote lower and upper bounds that specify the domain of the response surface.
\end{problem}

\begin{problem}\label{P2}
For any $\epsilon\in[\epsilon_\mathrm{lb},\epsilon_\mathrm{ub}]$, find $t\in[0,T]$ and $\Omega\in[\Omega_\mathrm{lb},\Omega_\mathrm{ub}]$ that render $z_\mathrm{opt}(t)$ stationary under the constraints that ODEs~\eqref{eq:full-first} and PBCs~\eqref{eq:pbc} are satisfied.
\end{problem}

Here we seek \emph{stationary} solutions such that both ridges and trenches are obtained via a unified optimization problem. 

\begin{remark}\label{rmk2}
The functional $\mathcal{A}_{\mathcal{L}^\infty}$ 
 is based on the $\mathcal{L}^\infty$ norm and is not smooth. This makes gradient-based optimization difficult with the objective $\mathcal{A}_{\mathcal{L}^\infty}$. Hence, in ~\eqref{P2}, we consider $z_\mathrm{opt}(t)$ as an optimization objective and take $t\in[0, T]$ as a design variable that must be determined.
\end{remark}

%We include $t$ as a design variable in Problem~\ref{P2} to extract the amplitude defined in~\eqref{eq:ainf} (cf. Remark~\ref{rmk2}).

% We solve Problems~\ref{P1}-\ref{P2} using indirect methods

% can be solved using a parameter continuation technique.

We have now defined two \emph{dynamic} optimization problems in terms of periodic orbits of the high-dimensional system~\eqref{eq:full-first}. The computational cost of obtaining these periodic orbits is significant. Optimization of these periodic orbits further adds to the computational expense. Next, we perform model reduction of the high-dimensional system~\eqref{eq:full-first} using spectral submanifolds (SSMs),  which has two significant computational benefits in addition to the reduction in the system dimension. First, using SSM-based ROMs, the computation of periodic orbits is transformed into the computation of fixed points, which significantly speeds up the computation of the reduced periodic response. Second, it allows us to convert Problems~\ref{P1} and \ref{P2} into \emph{algebraic} optimization problems defined for the SSM-based ROM. As we will see, these reduced {algebraic} optimization problems can be solved very efficiently.

\section{SSM-based model reduction}
\subsection{Setup}
Let $\{\lambda_i\}_{i=1}^{2n}$ be a set of eigenvalues of the matrix pair $(\boldsymbol{A},\boldsymbol{B})$ in~\eqref{eq:full-first} such that $\boldsymbol{A}\boldsymbol{v}_i=\lambda_i\boldsymbol{B}\boldsymbol{v}_i$ for some nontrivial vector $\boldsymbol{v}_i$. We assume that the trivial equilibrium $\boldsymbol{z}=\boldsymbol{0}$ is asymptotically stable. Then we can arrange these eigenvalues according to $\mathrm{Re}(\lambda_{2n})\leq\mathrm{Re}(\lambda_{2n-1})\leq\cdots\leq\mathrm{Re}(\lambda_{1})<0$. We consider a $2m$-dimensional \emph{master} underdamped modal subspace 
\begin{equation}
\mathcal{E}=\Span\{\boldsymbol{v}^\mathcal{E}_1,\bar{\boldsymbol{v}}^\mathcal{E}_1,\cdots,\boldsymbol{v}^\mathcal{E}_m,\bar{\boldsymbol{v}}^\mathcal{E}_m\},
\end{equation}
where the complex eigenvalues in the spectrum of $\mathcal{E}$ are allowed satisfy a near inner (internal) resonance relationship of the form
\begin{equation}
\label{eq:res-inner}
\lambda_i^\mathcal{E}\approx\boldsymbol{l}\cdot\boldsymbol{\lambda}^\mathcal{E}+\boldsymbol{j}\cdot\bar{\boldsymbol{\lambda}}^\mathcal{E},\quad \bar{\lambda}_i^\mathcal{E}\approx\boldsymbol{j}\cdot\boldsymbol{\lambda}^\mathcal{E}+\boldsymbol{l}\cdot\bar{\boldsymbol{\lambda}}^\mathcal{E}
\end{equation}
for some $i\in\{1,\cdots,m\}$, where $\boldsymbol{l},\boldsymbol{j}\in\mathbb{N}_0^m$ (the subscript 0 here emphasizes that zero is included) satisfying $|\boldsymbol{l}+\boldsymbol{j}|:=\sum_{k=1}^m (l_k+j_k)\geq2$, and $\boldsymbol{\lambda}_\mathcal{E}=(\lambda^\mathcal{E}_1,\cdots,\lambda^\mathcal{E}_m)$. 

As an example of the inner resonance relationship ~\eqref{eq:res-inner}, we consider an internally resonant system such that the master subspace $\mathcal{E}$ has two pairs of modes that exhibit near 1:1 inner resonances, i.e., $\lambda_2^\mathcal{E}\approx\lambda_1^\mathcal{E}$ and $\bar{\lambda}_2^\mathcal{E}\approx\bar{\lambda}_1^\mathcal{E}$. Then we have
\begin{gather}
	\lambda_1^\mathcal{E}\approx l_{11}\lambda_1^\mathcal{E}+l_{12}\lambda_2^\mathcal{E}+j_{11}\bar{\lambda}_1^\mathcal{E}+j_{12}\bar{\lambda}_2^\mathcal{E},\nonumber\\
	\lambda_2^\mathcal{E}\approx l_{21}\lambda_1^\mathcal{E}+l_{22}\lambda_2^\mathcal{E}+j_{21}\bar{\lambda}_1^\mathcal{E}+j_{22}\bar{\lambda}_2^\mathcal{E}
\end{gather}
for all $l_{ik},j_{ik}\in\mathbb{N}_0$ that satisfy $l_{i1}+l_{i2}=j_{i1}+j_{i2}+1$. For systems without internal resonance, we consider single-mode SSMs with $m=1$. 

We further allow for the forcing frequency $\Omega$ to be (nearly) resonant with the master eigenvalues as~\cite{part-i}
\begin{equation}
\label{eq:res-forcing}
    \boldsymbol{\lambda}^{\mathcal{E}}-\mathrm{i}\boldsymbol{r}\Omega\approx0, \,\,\boldsymbol{r}\in\mathbb{Q}^m.
\end{equation}
To illustrate the external resonance ~\eqref{eq:res-forcing}, we again consider the example where the master subspace $\mathcal{E}$ has two pairs of modes that exhibit near 1:1 inner resonances. Provided that we are interested in the primary resonance of the first pair of modes, namely, $\Omega\approx\mathrm{i}\lambda_1^{\mathcal{E}}$, then we have $\boldsymbol{r}=(1,1)$.

\subsection{Time-periodic spectral submanifolds}
Under the addition of the nonlinear internal force $\boldsymbol{F}(\boldsymbol{z})$ and the external forcing $\epsilon\boldsymbol{F}^{\mathrm{ext}}({\Omega t})$, the master subspace $\mathcal{E}$ is perturbed into a periodic invariant manifold with period $2\pi/\Omega$ that is $\mathcal{O}(\epsilon)$ $C^r$-close to $\mathcal{E}$ near $\boldsymbol{z}=0$. There are actually many such manifolds in general, but there is a unique, smoothest one called \emph{spectral submanifold} under appropriate non-resonance conditions~\cite{haller2016nonlinear}. We denote this periodic SSM of system~\eqref{eq:full-first} by $\mathcal{W}(\mathcal{E},\Omega t)$ and review the conditions for its existence and uniqueness in Theorem~\ref{th:SSM-existence-uniqueness} of Appendix~\ref{sec:app-ssm-existence}.

\subsection{Periodic orbits as fixed points of the reduced dynamics}
As detailed in Appendix~\ref{sec:app-ssm-existence}, we have an SSM parameterization $\boldsymbol{z}=\boldsymbol{W}_\epsilon(\boldsymbol{p},\phi)$ that maps the reduced coordinates $(\boldsymbol{p},\phi)\in\mathbb{C}^{2m}\times S^1$ to the state vector of the full system. Furthermore,
the reduced dynamics $\dot{\boldsymbol{p}}=\boldsymbol{R}_\epsilon(\boldsymbol{p},\phi)$ and $\dot{\phi}=\Omega$ (see~\eqref{eq:red-dyn}) on the SSM represent a ROM for the full system~\eqref{eq:full-first}. We can simplify the reduced dynamics~\eqref{eq:red-dyn} via a normal-form style of parameterization \cite{haro2016parameterization,jain2022compute,part-i}. Here, we present the general expression for this simplified vector field. A detailed derivation can be found in~\cite{part-i}.

Let $q_i$ and $\bar{q}_i$ denote the parameterization coordinates corresponding to the modes $\boldsymbol{v}_i^{\mathcal{E}}$ and $\bar{\boldsymbol{v}}_i^{\mathcal{E}}$, respectively, then the reduced coordinates $\boldsymbol{p}$ are given as
\begin{equation}
\label{eq:p-to-qs}
\boldsymbol{p}=(q_1,\bar{q}_1,\cdots,q_m,\bar{q}_m).
\end{equation}
We rewrite the parameterization~\eqref{eq:p-to-qs} in the time-periodic polar form as
\begin{equation}
\label{eq:polar-form}
    q_i=\rho_ie^{\mathrm{i}(\theta_i+r_i\Omega t)},\,\,\bar{q}_i=\rho_ie^{-\mathrm{i}(\theta_i+r_i\Omega t)}, \quad i=1,\cdots,m.
\end{equation}
The reduced dynamics~\eqref{eq:red-dyn} on the $2m$-dimensional SSM can then be transformed into the  polar coordinates $(\rho_i,\theta_i)$ as~\cite{part-i}
\begin{gather}
\begin{pmatrix}\dot{\rho}_i\\\dot{\theta}_i\end{pmatrix}=\boldsymbol{r}^{\mathrm{p}}_i(\boldsymbol{\rho},\boldsymbol{\theta},\Omega,\epsilon)+\mathcal{O}(\epsilon|\boldsymbol{\rho}|)g_i^\mathrm{p}(\phi),\quad i=1,\cdots,m,\nonumber\\
\dot{\phi}=\Omega, \quad (\boldsymbol{\rho},\boldsymbol{\theta})\in\mathbb{R}^m\times\mathbb{T}^m.\label{eq:ode-reduced-slow-polar}
\end{gather}
Here the superscript {p} stands for `polar'; the explicit expression for $\boldsymbol{r}^{\mathrm{p}}_i$ can be found in~\cite{part-i}; $g_i^\mathrm{p}$ is a $2\pi$-periodic function.

The ROM~\eqref{eq:ode-reduced-slow-polar} becomes singular when $\rho_i\to0$~\cite{part-i}. To resolve this singularity, an explicit ROM in Cartesian coordinates has been derived in~\cite{part-i}. Specifically, we rewrite the parameterization~\eqref{eq:p-to-qs} in the form
\begin{gather}
    q_i=q_{i,\mathrm{s}}e^{\mathrm{i}r_i\Omega t}=(q_{i,\mathrm{s}}^\mathrm{R}+\mathrm{i}q_{i,\mathrm{s}}^\mathrm{I})e^{\mathrm{i}r_i\Omega t},\nonumber\\
    \label{eq:cartesian-form}
    \bar{q}_i=\bar{q}_{i,\mathrm{s}}e^{-\mathrm{i}r_i\Omega t}=(q_{i,\mathrm{s}}^\mathrm{R}-\mathrm{i}q_{i,\mathrm{s}}^\mathrm{I})e^{-\mathrm{i}r_i\Omega t},
\end{gather}
for $i=1,\cdots,m$, where $q_{i,\mathrm{s}}^\mathrm{R}=\mathrm{Re}(q_{i,\mathrm{s}})$ and $q_{i,\mathrm{s}}^\mathrm{I}=\mathrm{Im}(q_{i,\mathrm{s}})$, then the reduced dynamics~\eqref{eq:red-dyn} on the SSM in Cartesian coordinates $(\boldsymbol{q}_{\mathrm{s}}^\mathrm{R},\boldsymbol{q}_{\mathrm{s}}^\mathrm{I})\in\mathbb{R}^m\times\mathbb{R}^m$ is obtained as below~\cite{part-i}
\begin{equation}
\label{eq:ode-reduced-slow-cartesian}
\begin{pmatrix}\dot{q}_{i,\mathrm{s}}^\mathrm{R}\\\dot{q}_{i,\mathrm{s}}^\mathrm{I}\end{pmatrix}=\boldsymbol{r}^{\mathrm{c}}_i(\boldsymbol{q}_{\mathrm{s}},\Omega,\epsilon)+\mathcal{O}(\epsilon|\boldsymbol{p}|)g_i^\mathrm{c}(\phi)
\end{equation}
for $i=1,\cdots,m$. Here, the superscript {c} stands for `Cartesian'; the explicit expression for $\boldsymbol{r}^{\mathrm{c}}_i$ can be found in~\cite{part-i}; $g_i^\mathrm{c}$ is a $2\pi$-periodic function.

As shown in~\cite{part-i}, any hyperbolic fixed point of the leading-order truncation of~\eqref{eq:ode-reduced-slow-polar} or~\eqref{eq:ode-reduced-slow-cartesian} , i.e.,
\begin{equation}
\begin{pmatrix}\dot{\rho}_i\\\dot{\theta}_i\end{pmatrix}=\boldsymbol{r}^{\mathrm{p}}_i(\boldsymbol{\rho},\boldsymbol{\theta},\Omega,\epsilon),\quad i=1,\cdots, m,\label{eq:ode-reduced-slow-polar-leading}
\end{equation} or
\begin{equation}
\label{eq:ode-reduced-slow-cartesian-leading}
\begin{pmatrix}\dot{q}_{i,\mathrm{s}}^\mathrm{R}\\\dot{q}_{i,\mathrm{s}}^\mathrm{I}\end{pmatrix}=\boldsymbol{r}^{\mathrm{c}}_i(\boldsymbol{q}_{\mathrm{s}},\Omega,\epsilon),\quad i=1,\cdots,m,
\end{equation}
corresponds to a periodic solution $\boldsymbol{p}(t)$ of the reduced dynamics~\eqref{eq:red-dyn} on the SSM, $\mathcal{W}(\mathcal{E},\Omega t)$. In addition, the stability type of a hyperbolic fixed point of ~\eqref{eq:ode-reduced-slow-polar-leading} or~\eqref{eq:ode-reduced-slow-cartesian-leading} coincides with the stability type of the corresponding periodic solution on $\mathcal{W}(\mathcal{E},\Omega t)$~\cite{part-i}.

Therefore, we can obtain periodic orbits of the full, high-dimensional system~\eqref{eq:full-first} as fixed points of the low-dimensional SSM-based ROM given by~\eqref{eq:ode-reduced-slow-polar-leading} or~\eqref{eq:ode-reduced-slow-cartesian-leading}. This simplification enables us to compute the FRS via analytic prediction or multi-dimensional manifold continuation of fixed points of the ROM, as we detail in the next section. In addition, it enables us to convert the optimization problems~\ref{P1} and~\ref{P2} into algebraic optimization problems, as we will show in Sect.~\ref{sec:opt-ssm}.

% In addition, the two-dimensional manifold of the equilibria of the ROMs is mapped to the frequency response surface of the full system~\eqref{eq:full-first}.

\section{Computation of FRS via SSM-based ROMs}
\label{sec:frs-ssm}

\subsection{Simplification of response amplitude}
We now derive an explicit expression for the response amplitude of the periodic orbit of the full system~\eqref{eq:full-first}. For a fixed point of the ROM~\eqref{eq:ode-reduced-slow-polar-leading} or~\eqref{eq:ode-reduced-slow-cartesian-leading}, we obtain the corresponding periodic orbit of the full system as ~\cite{jain2022compute,part-i}
\begin{equation}
\label{eq:ssm-time-varying}
\boldsymbol{z}(t)\approx    \boldsymbol{W}(\boldsymbol{p}(t))+\epsilon\left(\boldsymbol{x}_{\boldsymbol{0}}e^{\mathrm{i}\Omega t}+\bar{\boldsymbol{x}}_{\boldsymbol{0}}e^{-\mathrm{i}\Omega t}\right),
\end{equation}
where $\boldsymbol{W}$ is a polynomial function of $\boldsymbol{p}$ and $\boldsymbol{x}_{\boldsymbol{0}}$ is the solution to a system of linear equations below~\cite{jain2022compute,part-i}
\begin{equation}
\label{eq:expphi-}
(\boldsymbol{A}-\mathrm{i}\Omega\boldsymbol{B})\boldsymbol{x}_{\boldsymbol{0}}=\boldsymbol{B}\boldsymbol{W}_{\mathbf{I}}\boldsymbol{s}_{\boldsymbol{0}}^+-\boldsymbol{F}^{\mathrm{a}}.
\end{equation}
Here, $\boldsymbol{s}_{\boldsymbol{0}}^+$ and $\boldsymbol{F}^\mathrm{a}$ are independent of $\Omega$ and their explicit expressions are available in~\cite{part-i}. We have used a leading-order truncation of the non-autonomous part of the SSM in~\eqref{eq:ssm-time-varying}, which is consistent with the truncation in the ROMs~\eqref{eq:ode-reduced-slow-polar-leading}  and~\eqref{eq:ode-reduced-slow-cartesian-leading}.

For $\epsilon\ll1$, one may simply approximate the SSM expansion~\eqref{eq:ssm-time-varying} as
\begin{equation}
\label{eq:ssm-time-independent}
\boldsymbol{z}(t)\approx\boldsymbol{W}(\boldsymbol{p}(t)),
\end{equation}
which avoids the need to solve the linear system~\eqref{eq:expphi-}. Indeed, solving system~\eqref{eq:expphi-} can be computationally expensive, especially if the full system is high-dimensional, as we must repeat this computation for every different value of $\Omega$~\cite{part-i}. The approximation~\eqref{eq:ssm-time-independent} has also been adopted in the method of normal forms~\cite{touze2006nonlinear,vizzaccaro2021direct}. In this study, we refer to the SSM solution as time-independent (TI) if~\eqref{eq:ssm-time-independent} is used and as time-varying (TV)  if~\eqref{eq:ssm-time-varying} is used. In practice, some samples of $\Omega$ near the external resonance can be taken and SSM solutions $\boldsymbol{z}(t)$ in~\eqref{eq:ssm-time-independent} and~\eqref{eq:ssm-time-varying}  can be compared to decide whether TI SSM solutions are sufficient or TV SSM solutions are required.

\begin{sloppypar}
Substituting relations~\eqref{eq:ssm-time-varying},~\eqref{eq:p-to-qs}, and~\eqref{eq:polar-form} or~\eqref{eq:cartesian-form} into \eqref{eq:al2}, the  functional $\mathcal{A}_{\mathcal{L}^2}(\boldsymbol{z}(t))$ can be simplified as
\begin{equation}
\label{eq:obj-l2}
    A_{\mathcal{L}^2}(\boldsymbol{y},\Omega,\epsilon)=\sqrt{\sum_{\hat{r}_i\in\hat{\mathcal{R}}}\hat{\boldsymbol{w}}_{\hat{r}_i,\mathcal{I}}^\ast\boldsymbol{Q}\hat{\boldsymbol{w}}_{\hat{r}_i,\mathcal{I}}}~,
\end{equation}
where ${\boldsymbol{y}}=({\rho}_1,{\theta}_1,\cdots,\rho_m,\theta_m)$ or ${\boldsymbol{y}}=({q}_{1,\mathrm{s}}^{\mathrm{R}},{q}_{1,\mathrm{s}}^{\mathrm{I}},\cdots,{q}_{m,\mathrm{s}}^{\mathrm{R}},{q}_{m,\mathrm{s}}^{\mathrm{I}})$, depending on the choice of polar or Cartesian coordinates. The detailed derivation of \eqref{eq:obj-l2} is given in Appendix~\ref{sec:appendix-th3}, where we let $\boldsymbol{W}(\boldsymbol{p})=\sum_{(\boldsymbol{c},\boldsymbol{d})}\boldsymbol{w}_{(\boldsymbol{c},\boldsymbol{d})}\boldsymbol{q}^{\boldsymbol{c}}\bar{\boldsymbol{q}}^{\boldsymbol{d}}$ and $\hat{\mathcal{R}}=\{\hat{r}:\hat{r}=(\boldsymbol{c}-\boldsymbol{d})\cdot\boldsymbol{r}\}$. 

Similarly, $z_\mathrm{opt}$ in~\eqref{eq:ainf} can be simplified as 
\begin{equation}
\label{eq:obj-opt}
    A_\mathrm{opt}(\boldsymbol{y},\Omega,\epsilon,t)=\sum_{\hat{r}_i\in\hat{\mathcal{R}}}\hat{\boldsymbol{w}}_{\hat{r}_i,\mathrm{opt}}e^{\mathrm{i}{\hat{r}_i}\Omega t},
\end{equation}
where
\begin{equation}
\label{eq:what-ri}
    \hat{\boldsymbol{w}}_{\hat{r}_i}= 
\begin{cases}
    {\boldsymbol{w}}_{\hat{r}_i}^\mathrm{cor}+\epsilon\boldsymbol{x}_{\boldsymbol{0}},& \text{if } \hat{r}_i= 1\\
    {\boldsymbol{w}}_{\hat{r}_i}^\mathrm{cor}+\epsilon\bar{\boldsymbol{x}}_{\boldsymbol{0}},& \text{if } \hat{r}_i=-1\\
    {\boldsymbol{w}}_{\hat{r}_i}^\mathrm{cor},              & \text{otherwise.}
\end{cases}
\end{equation}
Here ${\boldsymbol{w}}_{\hat{r}_i}^\mathrm{cor}\in \mathbb{C}^{2n}$ with the superscript $\mathrm{cor}\in\{\mathrm{p},\mathrm{c}\}$; $\hat{\boldsymbol{w}}_{\hat{r}_i,\mathcal{I}}\in\mathbb{C}^{|\mathcal{I}|}$ and $\hat{\boldsymbol{w}}_{\hat{r}_i,\mathrm{opt}}\in\mathbb{C}$ with subscript 'opt' referring to the corresponding entries from the vector $\hat{\boldsymbol{w}}_{\hat{r}_i}$. Depending on the choice of parameterization coordinates ($\mathrm{cor}=\mathrm{p}$ for polar coordinates~\eqref{eq:polar-form} and $\mathrm{cor}=\mathrm{c}$ for Cartesian coordinates~\eqref{eq:cartesian-form}), we have
\begin{gather} \boldsymbol{w}_{\hat{r}_i}^\mathrm{p}=\sum_{(\boldsymbol{c},\boldsymbol{d})\in\mathcal{T}_i}\boldsymbol{w}_{(\boldsymbol{c},\boldsymbol{d})}\boldsymbol{\rho}^{\boldsymbol{c}+\boldsymbol{d}}e^{\mathrm{i}(\boldsymbol{c}-\boldsymbol{d})\cdot\boldsymbol{\theta}},\nonumber\\ \boldsymbol{w}_{r_i}^\mathrm{c}=\sum_{(\boldsymbol{c},\boldsymbol{d})\in\mathcal{T}_i}\boldsymbol{w}_{(\boldsymbol{c},\boldsymbol{d})}\boldsymbol{q}_\mathrm{s}^{\boldsymbol{c}}\bar{\boldsymbol{q}}_\mathrm{s}^{\boldsymbol{d}},
\end{gather}
where $\mathcal{T}_i=\{(\boldsymbol{c},\boldsymbol{d}):\hat{r}_i=(\boldsymbol{c}-\boldsymbol{d})\cdot\boldsymbol{r}\}$.
\end{sloppypar}
The explicit expressions of the response amplitude depend on the coefficients, $\boldsymbol{w}_{(\boldsymbol{c},\boldsymbol{d})}$, of the SSM expansion $\boldsymbol{W}(\boldsymbol{p})$. Moreover, we need the expansion coefficients in the ROMs~\eqref{eq:ode-reduced-slow-polar-leading} and~\eqref{eq:ode-reduced-slow-cartesian-leading}. The automated computational procedure for obtaining these coefficients is documented in~\cite{jain2022compute} and implemented in an open-source package SSMTool~\cite{ssmtool21}. We use SSMTool to obtain these coefficients. Following~\cite{part-i,part-ii}, we determine the truncation order of $\boldsymbol{W}(\boldsymbol{p})$ based on the convergence of forced responses under increasing expansion orders.

\subsection{Analytical FRS via two-dimensional SSM-based model reduction}
\label{sec:FRS-ana}
We construct two-dimensional SSM-based ROMs for systems without internal resonances. In this case, we drop the subscript $i=1$ because $m=1$ and the reduced dynamics~\eqref{eq:ode-reduced-slow-polar-leading} becomes~\cite{jain2022compute}
\begin{gather}
    \dot{\rho}=a(\rho)+\epsilon\left(\mathrm{Re}(f)\cos\theta+\mathrm{Im}(f)\sin\theta\right),\nonumber\\
    \dot{\theta}=b(\rho)-\Omega+\frac{\epsilon}{\rho}\left(\mathrm{Im}(f)\cos\theta-\mathrm{Re}(f)\sin\theta\right)\label{eq:red-nonauto-th}
\end{gather}
where $f$ is a complex constant associated with the shape of the forcing and the master mode, and $a(\rho)$ and $b(\rho)$ are polynomial functions.
Solving for the fixed points of~\eqref{eq:red-nonauto-th} (by letting $\dot{\rho}=\dot{\theta}=0$), we obtain periodic orbits of the full system. These fixed points lie on the zero-level set of the functional~\cite{jain2022compute}
\begin{equation}
\label{eq:frs-rho-om}
    \mathcal{F}(\rho,\Omega,\epsilon): = a^2(\rho)+(b(\rho)-\Omega)^2\rho^2-{\epsilon^2}|f|^2.
\end{equation}
Thus, the response surface in reduced coordinates is a two-dimensional manifold of zeros of $\mathcal{F}(\rho,\Omega,\epsilon)$. Mapping this surface to physical coordinates via~\eqref{eq:obj-l2} or~\eqref{eq:obj-opt}, we obtain the FRS. 

%In practice, we use \texttt{isosurface} routine to extract the zero level set of $\mathcal{F}$ and then locate the FRS in reduced coordinates.
%Since the FRS is represented via a collection of vertices along with a set of faces where each face specifies the vertices that define the face, we can map the vertices in reduced coordinates to the ones in physical coordinates and then construct the FRS in physical coordinates using the same connections provided by the faces.

\subsection{Two-dimensional manifold continuation}
\label{sec:atlas-2d}
For systems with internal resonances, higher-dimensional SSM are required for model reduction. In that case, the analytic prediction of FRS via~\eqref{eq:frs-rho-om}  is not available. Hence, we use a two-dimensional manifold continuation algorithm to numerically cover the FRS. This method works alike for two-dimensional or higher-dimensional SSM-based ROMs.

Specifically, we use the Henderson algorithm~\cite{henderson2002multiple} to cover the FRS. In this algorithm, the solution manifold is characterized via a piecewise-polyhedral approximate tessellation.
Henderson's algorithm has already been implemented in the software package~\textsc{coco}. Here we use this package for the computation of the FRS. This two-dimensional manifold is represented by an atlas of charts. Each of these charts is characterized by four components~\cite{dankowicz2020multidimensional,dankowicz2013recipes}: a base point on the manifold, the tangent space of the manifold at the base point, a polygon in the tangent space that belongs to the approximate tessellation, and a radius of the circular bounding region (see Fig.~13.2 of~\cite{dankowicz2013recipes} for more details). In parameter continuation, new charts are constructed from old charts in the expansion stage and these new charts are merged with old charts in the consolidation stage. More details about the expansion and consolidation stages can be found in Chapter 13 of~\cite{dankowicz2013recipes} (see Figs.~13.2-13.3 of~\cite{dankowicz2013recipes} for schematic plots).

We use March, 2020-release of~\textsc{coco}~\cite{COCO} to perform this parameter continuation. The multidimensional continuation algorithm implemented in this release is well-documented in~\cite{dankowicz2020multidimensional}. In particular, the expansion stage is performed on the full set of problem variables, whereas the consolidation stage is conducted in a space defined by active continuation parameters. This decoupling is crucial for adaptive problems where the number and meaning of unknowns change dynamically~\cite{dankowicz2020multidimensional}. Since we perform continuation of fixed points of the SSM-based ROMs, adaptive problems are not involved. To accommodate the multidimensional continuation algorithm, however, we need to introduce some continuation parameters. In~\textsc{coco}, continuation parameters are used to track the values of the monitor functions defined along the solution manifold. Here, we define the set of active continuation parameters as $(\boldsymbol{y},\Omega,\epsilon,\mathbf{A})$, i.e., the parameterization coordinates $\boldsymbol{y}$, the excitation frequency and amplitude, and some observable $\mathbf{A}$ such as~\eqref{eq:al2} and~\eqref{eq:ainf} (we use their simplifications in~\eqref{eq:obj-l2} and~\eqref{eq:obj-opt}). In the case of Cartesian coordinates for $\boldsymbol{y}$, we also define monitor functions for their magnitudes to obtain the FRS in reduced coordinates. Since $\boldsymbol{y}$ and $\mathbf{A}$ are implicit functions of $\Omega$ and $\epsilon$, the solution manifold is indeed two-dimensional.

% We conclude this section 

\section{Optimization via SSM-based ROMs}
\label{sec:opt-ssm}

In the above section, we have successfully transformed the computation of the FRS of periodic orbits into the computation of a two-dimensional manifold of fixed points of an appropriate SSM-based ROM. However, the computational cost of such a two-dimensional manifold is still high relative to that of a one-dimensional manifold. At the same time, the ridges and trenches of an FRS are one-dimensional curves that characterize the skeleton of the FRS. Next, we provide a fast computation procedure to locate these ridges and trenches without the need to compute the entire FRS. As we will see from numerical examples in Sect.~\ref{sec:examples}, the computational time for locating these ridges and trenches is indeed much smaller than that of computing the FRS.

Recall that the ridges and trenches of the FRC can be located via formulated optimization problems~\ref{P1} and~\ref{P2}. With the simplified objective functions given in~\eqref{eq:obj-l2} and~\eqref{eq:obj-opt}, we are now ready to construct reduced optimization problems for Problems~\ref{P1} and~\ref{P2}.
Recall that ${\boldsymbol{y}}$ is a fixed point of the ROM~\eqref{eq:ode-reduced-slow-polar-leading} or~\eqref{eq:ode-reduced-slow-cartesian-leading}. Therefore, the original optimization problems~\ref{P1} and~\ref{P2} with constraints in the form of high-dimensional differential equations are reduced to optimization problems with a few \emph{algebraic} constraints. In particular, we have the following reduced algebraic optimization problem for the original dynamic optimization problem~\ref{P1}.

\begin{sloppypar}
\begin{problem}\label{P3}
Find $\epsilon\in[\epsilon_\mathrm{lb},\epsilon_\mathrm{ub}]$ and $\Omega\in[\Omega_\mathrm{lb},\Omega_\mathrm{ub}]$ that renders $A_{\mathcal{L}^2}(\boldsymbol{y},\Omega,\epsilon)$ stationary under the constraints that $\boldsymbol{h}(\boldsymbol{y},\epsilon,\Omega)=\boldsymbol{0}$ are satisfied. Here $\boldsymbol{h}:\mathbb{R}^{2m}\times\mathbb{R}\times\mathbb{R}\to\mathbb{R}^{2m}$ collects the vector field for $\boldsymbol{y}$. Specifically, in the case of the polar coordinates, we have ${\boldsymbol{y}}=({\rho}_1,{\theta}_1,\cdots,\rho_m,\theta_m)$ and $\boldsymbol{h}=(\boldsymbol{r}^{\mathrm{p}}_1,\cdots,\boldsymbol{r}^{\mathrm{p}}_m)$. For Cartesian coordinates, we have ${\boldsymbol{y}}=({q}_{1,\mathrm{s}}^{\mathrm{R}},{q}_{1,\mathrm{s}}^{\mathrm{I}},\cdots,{q}_{m,\mathrm{s}}^{\mathrm{R}},{q}_{m,\mathrm{s}}^{\mathrm{I}})$ and $\boldsymbol{h}=(\boldsymbol{r}^{\mathrm{c}}_1,\cdots,\boldsymbol{r}^{\mathrm{c}}_m)$.
\end{problem}
\end{sloppypar}

The constraint manifold defined by $\boldsymbol{h}=\boldsymbol{0}$ is two-dimensional and can be parameterized by the excitation frequency $\Omega$ and the amplitude $\epsilon$. Therefore, we have an FRS in the space $(\epsilon,\Omega,A_{\mathcal{L}^2}$). For a given forcing amplitude $\epsilon=\epsilon_o$, the response surface is reduced to an FRC. The local extrema of the FRC are stationary points of Problem~\ref{P3} restricted to $\epsilon=\epsilon_o$. As $\epsilon$ varies, these stationary points can be connected to form curves that define the ridges and trenches on the FRS. 

We note that we have included $\epsilon$ as a design variable in Problem~\ref{P3}. This augmentation enables us to use a successive parameter continuation technique~\cite{kernevez1987optimization,li2018staged,li2020optimization} to extract the ridges and trenches directly. In particular, we need to compute only one FRC with this technique. We first perform continuation with respect to $\Omega$ keeping $\epsilon$ fixed to obtain this FRC. To locate the ridges and trenches, we then perform continuation of the extrema on this FRC, allowing $\Omega$ and $\epsilon$ to change freely. This continuation is achieved through an augmented continuation problem consisting of both original constraints and adjoint equations~\cite{li2018staged,ahsan2022methods}. We provide a detailed discussion of this successive continuation method in Sect~\ref{sec:parameter-continuation}.

Similarly to Problem\ref{P1}, we obtain a reduced algebraic optimization problem for the dynamic optimization Problem~\ref{P2} as
\begin{problem}\label{P4}
Find $t\in[0,T]$, $\epsilon\in[\epsilon_\mathrm{lb},\epsilon_\mathrm{ub}]$ and $\Omega\in[\Omega_\mathrm{lb},\Omega_\mathrm{ub}]$ that render $A_\mathrm{opt}(\boldsymbol{y},\Omega,\epsilon,t)$ stationary under the constraints that $\boldsymbol{h}(\boldsymbol{y},\epsilon,\Omega)=\boldsymbol{0}$ are satisfied.
\end{problem}
Problem~\ref{P4} above seeks stationary solutions of the objective function $A_\mathrm{opt}$ in $(\Omega,\epsilon,t)$-space. We first locate the time  $t$ at which the periodic signal $A_\mathrm{opt}$ reaches its maximum magnitude (cf.~\eqref{eq:ainf}) for a given $(\Omega,\epsilon)$. Similarly to the solution procedure of Problem~\ref{P3}, we then compute an FRC by fixing $\epsilon$ and allowing $\Omega$ to change. Finally, upon locating the extrema on this FRC, we again set $\epsilon$ as a free parameter to locate the ridges and trenches on the FRS. Further details of this procedure are given in Sect.~\ref{sec:parameter-continuation}.

% We will locate the stationary solutions of the two problems using a indirect method. In other words, we solve for the first-order necessary conditions for the stationary solutions. Therefore, we need the gradients of the objective functions with respect to their arguments. In this subsection, we derive the explicit expressions of such gradients. These gradients can also be used in direct search of optimal solutions.
%\label{theo:grad-l2}

\section{Locating ridges and trenches using parameter continuation}
\label{sec:parameter-continuation}
In this section, we show how to locate the ridges and trenches of an FRS by solving the reduced optimization problems~\ref{P3} and~\ref{P4} via successive continuation~\cite{kernevez1987optimization,li2018staged,li2020optimization}. %We include a brief introduction to the solution procedure has been provided following the definition of Problem~\ref{P3} and Problem~\ref{P4}. Here we present full details about the solution procedure.

\subsection{Solution to Problem~\ref{P3}}
\label{sec:sol-p3}
\begin{sloppypar}
We introduce a Lagrangian
\begin{align}
    L_{\mathcal{L}^2}=&\mu_{A_{\mathcal{L}^2}}+\eta_{A_{\mathcal{L}^2}}(A_{\mathcal{L}^2}-\mu_{A_{\mathcal{L}^2}})+\nonumber\\&\eta_\epsilon(\epsilon-\mu_\epsilon)+\eta_\Omega(\Omega-\mu_\Omega)+\boldsymbol{\lambda}^\top\boldsymbol{h}.
\end{align}
where $(\mu_{A_{\mathcal{L}^2}},\mu_\epsilon,\mu_\Omega)$ are auxiliary parameters and $(\eta_{A_{\mathcal{L}^2}},\eta_\epsilon,\eta_\Omega,\boldsymbol{\lambda})$ are Lagrangian multipliers. Equating the derivatives of $L_{\mathcal{L}^2}$  to zero, we obtain
\begin{gather}
    A_{\mathcal{L}^2}-\mu_{A_{\mathcal{L}^2}}=0,\, \epsilon-\mu_\epsilon=0,\, \Omega-\mu_\Omega=0,\,\boldsymbol{h}=\boldsymbol{0},\label{eq:constraints}\\
    \frac{\partial A_{\mathcal{L}^2}}{\partial\boldsymbol{y}}\eta_{A_{\mathcal{L}^2}}+\left(\frac{\partial\boldsymbol{h}}{\partial\boldsymbol{y}}\right)^\top\boldsymbol{\lambda}=\boldsymbol{0},\label{eq:P3-adj-y}\\
    \frac{\partial A_{\mathcal{L}^2}}{\partial\Omega}\eta_{A_{\mathcal{L}^2}}+\eta_\Omega+\left(\frac{\partial\boldsymbol{h}}{\partial\Omega}\right)^\top\boldsymbol{\lambda}=0,\label{eq:P3-adj-om}\\ \frac{\partial A_{\mathcal{L}^2}}{\partial\epsilon}\eta_{A_{\mathcal{L}^2}}+\eta_\epsilon+\left(\frac{\partial\boldsymbol{h}}{\partial\epsilon}\right)^\top\boldsymbol{\lambda}=0\label{eq:P3-adj-eps},
\end{gather}
$1-\eta_{A_{\mathcal{L}^2}}=0$, and $\eta_\epsilon=\eta_\Omega=0$. Here,~\eqref{eq:constraints} represents constraints and~\eqref{eq:P3-adj-y}-\eqref{eq:P3-adj-eps} provides the adjoint equations. Due to the introduction of auxiliary parameters, the adjoint equations obtained are homogeneous and linear with respect to the Lagrange multipliers~\cite{li2018staged}. This enables us to construct an initial solution with trivial Lagrange multipliers. Explicit gradients of $A_{\mathcal{L}^2}$ with respect to $\boldsymbol{y}$, $\Omega$ and $\epsilon$ are given in Appendix~\ref{sec:appendix-grad}.
\end{sloppypar}

A continuation problem is constructed with the first-order necessary conditions~\eqref{eq:constraints}-\eqref{eq:P3-adj-eps}. The ridges and trenches in the FRS are obtained using \emph{ a successive continuation technique} as follows:
\begin{sloppypar}
\begin{enumerate}
    \item Detect extrema along the FRC for a given $\epsilon$. With $\eta_{A_{\mathcal{L}^2}}=\eta_\epsilon=\eta_\Omega=0$ and $\boldsymbol{\lambda}=\boldsymbol{0}$, the adjoint equations~\eqref{eq:P3-adj-y}-\eqref{eq:P3-adj-eps} are automatically satisfied and an FRC is obtained in the constraint manifold with fixed $\epsilon$ and free $\Omega\in [\Omega_\mathrm{lb},\Omega_\mathrm{ub}]$. Along the FRC, the extrema of $A_{\mathcal{L}^2}$ within the interval $[\Omega_\mathrm{lb},\Omega_\mathrm{ub}]$ are detected as fold points.
    \item Perform continuation along the secondary branches until  $\eta_{A_{\mathcal{L}^2}}=1$. The fold points above are also branch points~\cite{kernevez1987optimization,li2020optimization}. Along the secondary branch that passes through each branch point, the design variables $(\boldsymbol{y},\epsilon,\Omega)$ do not change, while the Lagrange multipliers $(\eta_{A_{\mathcal{L}^2}},\eta_\epsilon,\eta_\Omega,\boldsymbol{\lambda})$ vary linearly~\cite{li2020optimization}. Therefore, at each branch point, we switch the continuation from the primary branch to the secondary branch. We continue along the secondary branch until $\eta_{A_{\mathcal{L}^2}}=1$. We proceed to the next step once $\eta_{A_{\mathcal{L}^2}}=1$ is obtained for each of the continuation runs along the secondary branches.
    \item Release $\epsilon$ to locate ridges and trenches. We fix $\eta_{A_{\mathcal{L}^2}}=1$ and allow $\epsilon$ to vary, which yields another one-dimensional manifold that defines the ridges and trenches. This is because $\eta_{A_{\mathcal{L}^2}}=1$ and $\eta_\Omega=0$ along this curve. Furthermore, if $\eta_\epsilon=0$ at a point along the curve, then that point corresponds to a stationary solution on the FRS. We stop this continuation run once $\epsilon$ reaches the endpoints of the interval $[\epsilon_\mathrm{lb},\epsilon_\mathrm{ub}]$.
\end{enumerate}
\end{sloppypar}

From a geometric perspective, we require (for any fixed $\epsilon$)
\begin{equation}
\label{eq:geo-fold-l2}
    \frac{DA_{\mathcal{L}^2}}{D\Omega}=\frac{\partial A_{\mathcal{L}^2}}{\partial\Omega}+\left(\frac{\partial A_{\mathcal{L}^2}}{\partial\boldsymbol{y}}\right)^\top\frac{\partial\boldsymbol{y}}{\partial\Omega}=0
\end{equation}
to locate a point on the ridges/trenches of the FRS. Next, we show that~\eqref{eq:geo-fold-l2} holds during Step 3 of our procedure above. In Step 3, we have $\eta_{A_{\mathcal{L}^2}}=1$ and $\eta_\Omega=0$. Substituting these values into~\eqref{eq:P3-adj-y} and~\eqref{eq:P3-adj-om}, we obtain
\begin{equation}
\label{eq:part_AL2_adj}
    \frac{\partial A_{\mathcal{L}^2}}{\partial\boldsymbol{y}}=-\left(\frac{\partial\boldsymbol{h}}{\partial\boldsymbol{y}}\right)^\top\boldsymbol{\lambda},\, \frac{\partial A_{\mathcal{L}^2}}{\partial\Omega}=-\left(\frac{\partial\boldsymbol{h}}{\partial\Omega}\right)^\top\boldsymbol{\lambda}.
\end{equation}
Along the constraint manifold (for any fixed $\epsilon$), we have
\begin{equation}
\label{eq:Dy-Dom}
    D\boldsymbol{h}(\boldsymbol{y},\Omega)=\left(\frac{\partial\boldsymbol{h}}{\partial\boldsymbol{y}}\right)\partial\boldsymbol{y}+\left(\frac{\partial\boldsymbol{h}}{\partial\Omega}\right)\partial\Omega=\boldsymbol{0}
\end{equation}
and hence,
\begin{equation}
\label{eq:dydom}
    \frac{\partial\boldsymbol{y}}{\partial\Omega}=-\left(\frac{\partial\boldsymbol{h}}{\partial\boldsymbol{y}}\right)^{-1}\left(\frac{\partial\boldsymbol{h}}{\partial\Omega}\right).
\end{equation}
Substituting~\eqref{eq:part_AL2_adj} and~\eqref{eq:dydom} into~\eqref{eq:geo-fold-l2} yields (for any fixed $\epsilon$)
\begin{align}
    & \frac{DA_{\mathcal{L}^2}}{D\Omega} =\frac{\partial A_{\mathcal{L}^2}}{\partial\Omega}+\left(\frac{\partial A_{\mathcal{L}^2}}{\partial\boldsymbol{y}}\right)^\top\frac{\partial\boldsymbol{y}}{\partial\Omega}\nonumber\\
    & =-\left(\frac{\partial\boldsymbol{h}}{\partial\Omega}\right)^\top\boldsymbol{\lambda}+\boldsymbol{\lambda}^\top\left(\frac{\partial\boldsymbol{h}}{\partial\boldsymbol{y}}\right)\left(\frac{\partial\boldsymbol{h}}{\partial\boldsymbol{y}}\right)^{-1}\left(\frac{\partial\boldsymbol{h}}{\partial\Omega}\right)\equiv0.\label{eq:DAl2-Dom}
\end{align}
Thus,~\eqref{eq:geo-fold-l2} indeed holds, and the solutions from Step 3 of our procedure lie on the ridges or trenches of the FRS.

\subsection{Solution to Problem~\ref{P4}}
\label{sec:sol-p4}
We introduce a Lagrangian
\begin{align}
    L_\mathrm{opt}=\mu_{A_\mathrm{opt}}+\eta_{A_\mathrm{opt}}(A_\mathrm{opt}-\mu_{A_\mathrm{opt}})+\eta_t(t-\mu_t)\nonumber\\+\eta_\epsilon(\epsilon-\mu_\epsilon)+\eta_\Omega(\Omega-\mu_\Omega)+\boldsymbol{\lambda}^\top\boldsymbol{h}.
\end{align}
where $(\mu_{A_\mathrm{opt}},\mu_t,\mu_\epsilon,\mu_\Omega)$ are auxiliary parameters and $(\eta_{A_\mathrm{opt}},\eta_t,\eta_\epsilon,\eta_\Omega,\boldsymbol{\lambda})$ are Lagrangian multipliers. Equating the derivatives of $L_\mathrm{opt}$ to zero yields
\begin{gather}
    A_\mathrm{opt}-\mu_{A_\mathrm{opt}}=0, \quad t-\mu_t=0,\quad \epsilon-\mu_\epsilon=0,\label{eq:constraints1-inf}\\ \Omega-\mu_\Omega=0,\quad \boldsymbol{h}=\boldsymbol{0},\label{eq:constraints2-inf}\\
    \frac{\partial A_\mathrm{opt}}{\partial\boldsymbol{y}}\eta_{A_\mathrm{opt}}+\left(\frac{\partial\boldsymbol{h}}{\partial\boldsymbol{y}}\right)^\top\boldsymbol{\lambda}=\boldsymbol{0},\label{eq:P4-adj-y}\\ \frac{\partial A_\mathrm{opt}}{\partial t}\eta_{A_\mathrm{opt}}+\eta_t=0,\label{eq:P4-adj-t}\\
    \frac{\partial A_\mathrm{opt}}{\partial\Omega}\eta_{A_\mathrm{opt}}+\eta_\Omega+\left(\frac{\partial\boldsymbol{h}}{\partial\Omega}\right)^\top\boldsymbol{\lambda}=0,\\\frac{\partial A_\mathrm{opt}}{\partial\epsilon}\eta_{A_\mathrm{opt}}+\eta_\epsilon+\left(\frac{\partial\boldsymbol{h}}{\partial\epsilon}\right)^\top\boldsymbol{\lambda}=0 \label{eq:P4-adj-om-eps},
\end{gather}
$1-\eta_{A_\mathrm{opt}}=0$, and $\eta_t=\eta_\epsilon=\eta_\Omega=0$. Here,~\eqref{eq:constraints1-inf}-\eqref{eq:constraints2-inf} represent the original constraints and~\eqref{eq:P4-adj-y}-\eqref{eq:P4-adj-om-eps} provide the adjoint equations. Similarly to the previous case, we have introduced auxiliary parameters such that the adjoint equations are homogeneous and linear with respect to the Lagrange multipliers. Explicit gradients of $A_\mathrm{opt}$ with respect to $\boldsymbol{y}$, $\Omega$ and $\epsilon$ are given in Appendix~\ref{sec:appendix-grad}.

A continuation problem is constructed with the first-order necessary conditions~\eqref{eq:constraints1-inf}-\eqref{eq:P4-adj-om-eps}. Likewise, the ridges and trenches on the FRS are obtained using a successive continuation technique as below
\begin{sloppypar}
\begin{enumerate}
    \item Find the amplitude of a periodic orbit for given $\epsilon$ and $\Omega$. With $\eta_{A_\mathrm{opt}}=\eta_t=\eta_\epsilon=\eta_\Omega=0$ and $\boldsymbol{\lambda}=\boldsymbol{0}$, the adjoint equations~\eqref{eq:P4-adj-y}-\eqref{eq:P4-adj-om-eps} are automatically satisfied, and a periodic orbit is obtained for the given $\epsilon$ and $\Omega$, along which extrema of $A_\mathrm{opt}$ for $t\in[0,T]$ are detected as fold points. We identify the fold point with maximum magnitude, which gives the amplitude of the periodic orbit.
    \item Perform continuation along the secondary branch until $\eta_{A_\mathrm{opt}}=1$. The fold point above is also a branch point~\cite{kernevez1987optimization,li2020optimization}. Along the secondary branch passing through the branch point, the design variables $(\boldsymbol{y},\epsilon,\Omega)$ do not change whereas the Lagrange multipliers $(\eta_{A_\mathrm{opt}},\eta_\epsilon,\eta_\Omega,\boldsymbol{\lambda})$ vary linearly~\cite{li2020optimization}. We perform branch switching and continue along the secondary branch until $\eta_{A_\mathrm{opt}}=1$.
    \item Release $\Omega$ and perform continuation until $\eta_\Omega=0$. Once $\eta_{A_\mathrm{opt}}=1$ is obtained, we fix $\eta_{A_\mathrm{opt}}=1$ and allow $\Omega$ to vary within $[\Omega_\mathrm{lb},\Omega_\mathrm{ub}]$ to produce an FRC. This is because $\eta_{A_\mathrm{opt}}=1$ and $\eta_t=0$ along this curve. If $\eta_\Omega=0$ is detected at a point along the curve, this point corresponds to a stationary solution on the FRC. Along the FRC, several stationary points may be detected. We move on to Step 4 for each of these stationary points.
    \item Release $\epsilon$ to locate ridges and trenches. Starting from a stationary point on the FRC obtained in Step 3, we fix $\eta_\Omega=0$ but allow $\epsilon$ to vary to obtain a one-dimensional manifold defining the ridges and trenches of the FRS. This is because $\eta_{A_\mathrm{opt}}=1$ and $\eta_t=\eta_\Omega=0$ along this one-dimensional manifold. We terminate this continuation run once $\epsilon$ reaches the endpoints of the interval $[\epsilon_\mathrm{lb},\epsilon_\mathrm{ub}]$.
\end{enumerate}
\end{sloppypar}

From a geometric point of view, require (for any fixed $\epsilon$)
\begin{equation}
\label{eq:geo-aopt}
    \frac{DA_{\mathrm{opt}}}{D\Omega}:=\frac{\partial A_{\mathrm{opt}}}{\partial\Omega}+\left(\frac{\partial A_{\mathrm{opt}}}{\partial\boldsymbol{y}}\right)^\top\frac{\partial\boldsymbol{y}}{\partial\Omega}=0,\, \frac{\partial A_{\mathrm{opt}}}{\partial t}=0
\end{equation}
to locate a point on the ridges/trenches of the FRS. We need to show that~\eqref{eq:geo-aopt} holds in Step 4 of the above procedure. In this step, we have $\eta_{A_{\mathrm{opt}}}=1$ and $\eta_t=\eta_\Omega=0$. Substituting $\eta_{A_{\mathrm{opt}}}=1$ and $\eta_t=0$ into~\eqref{eq:P4-adj-t} yields $\partial A_\mathrm{opt}/\partial t=0$. Next, we show that ${DA_{\mathrm{opt}}}/{D\Omega}=0$  in Step 4 of the above procedure. We substitute $\eta_{A_{\mathrm{opt}}}=1$ and $\eta_\Omega=0$ into \eqref{eq:P4-adj-y} and~\eqref{eq:P4-adj-om-eps}, yielding
\begin{equation}
    \frac{\partial A_{\mathrm{opt}}}{\partial\boldsymbol{y}}=-\left(\frac{\partial\boldsymbol{h}}{\partial\boldsymbol{y}}\right)^\top\boldsymbol{\lambda},\, \frac{\partial A_{\mathrm{opt}}}{\partial\Omega}=-\left(\frac{\partial\boldsymbol{h}}{\partial\Omega}\right)^\top\boldsymbol{\lambda}.
\end{equation}
Along the constraint manifold (for any fixed $\epsilon$),~\eqref{eq:Dy-Dom} still holds. Then we can easily show ${DA_{\mathrm{opt}}}/{D\Omega}=0$ similarly to~\eqref{eq:DAl2-Dom}. Thus,~\eqref{eq:geo-aopt} indeed holds, and the solutions from Step 4 of our procedure lie on ridges or trenches of the FRS.

\begin{remark}
\label{rmk-sn}
In numerical experiments, we observed that when the continuation run in Step 3 approaches a saddle-node (SN) bifurcation point on the FRC, the design variables barely change while the Lagrange multiplier changes considerably. At an SN point, we have $\eta_{A_\mathrm{opt}}=1$ and $\partial\boldsymbol{h}/\partial\boldsymbol{y}$ is singular. We deduce from~\eqref{eq:P4-adj-y} that $|\boldsymbol{\lambda}|\to\infty$ when $\partial\boldsymbol{h}/\partial\boldsymbol{y}$ becomes singular and $\eta_{A_\mathrm{opt}}\neq0$. This explains the observation because $\eta_{A_\mathrm{opt}}\equiv1$ along the FRC. Therefore, we need to select initial points such that fold points can be found along a segment of the FRC that does not contain any SN points. This is feasible since the fold points do not exactly coincide with the SN points.
\end{remark}

%\begin{remark}
%In numerical experiments, we also observe that the Jacobian of Newton iteration becomes nearly singular when $\epsilon\to0$ in the continuation run in Step 4. This indicates that the trivial solution is a singular point. Indeed, when $\epsilon=0$, we have $z(t)\equiv0$, independently of $t$, and hence locating a stationary solution with respect to $t$ is not meaningful. In practice, we again set the lower bound of $\epsilon$ to be some small positive numbers.
%\end{remark}

\section{Examples}
\label{sec:examples}
\subsection{A cantilever beam with nonlinear support}
We consider a cantilever beam with a cubic spring and a cubic damper support at its free end~\cite{ponsioen2019analytic}. The beam is modeled using Bernoulli beam theory, and hence the only nonlinearity in this example comes from the support spring and damper. The reaction force of the support is modeled as
\begin{equation}
\label{eq:f-spring-damp}
F=\kappa w^3+\gamma \dot{w}^3,
\end{equation}
where $w$ is the transverse displacement at the free end, $F$ is the reaction force, and $\kappa$ and $\gamma$ denote the coefficients of the cubic spring and damper. We apply a harmonic excitation $\epsilon\cos\Omega t$ at the free end and calculate the FRS. Here, we use the amplitude functional
\begin{equation}
\label{eq:w-l2}
    ||w||_{\mathcal{L}^2}=\sqrt{\frac{1}{T}\int_0^T w^2(t)\mathrm{d}t}
\end{equation}
to characterize the response of the beam. Note that $||w||_{\mathcal{L}^2}$ is a special case of the $\mathcal{L}^2$ norm-based objective~\eqref{eq:al2}.

The geometric and material properties of this beam are the same as those in~\cite{ponsioen2019analytic}. In particular, the width, height, and length of the beam are 10 mm, 10 mm, and 2700 mm, respectively, and the density and Young's modulus of the material are $1780\times10^{-9}\,\mathrm{kg/{mm}^3}$ and $45\times10^6\,\mathrm{kPa}$. We use a classic finite element scheme to discretize this beam. Specifically, two degrees of freedom, namely, the transverse displacement and the rotation angle, are introduced at each node. At each element, the displacement field is approximated with Hermite interpolation. The resulting equations of motion for the discretized beam model are given as
\begin{equation}
\label{eq:eom-beams}
\boldsymbol{M}\ddot{\boldsymbol{x}}+\boldsymbol{C}\dot{\boldsymbol{x}}+\boldsymbol{K}\boldsymbol{x}+\boldsymbol{N}(\boldsymbol{x},\dot{\boldsymbol{x}})=\epsilon\boldsymbol{f}\cos\Omega t,
\end{equation}
where $\boldsymbol{x}\in\mathbb{R}^{2N_{\mathrm{e}}}$ is the assembly of all degrees of freedom with $N_{\mathrm{e}}$ being the number of elements used in the discretization, $\boldsymbol{M}$, $\boldsymbol{C}$ and $\boldsymbol{K}$ are the mass, damping and stiffness matrices, respectively, $\boldsymbol{N}$ collects the nonlinear internal force vector due to~\eqref{eq:f-spring-damp}, and $\boldsymbol{f}$ denotes the external force vector associated with the harmonic excitation. Here, we use the Rayleigh damping hypothesis $\boldsymbol{C}=\alpha\boldsymbol{M}+\beta\boldsymbol{K}$ with $\alpha=1.25\times10^{-4}\,\mathrm{s}$ and $\beta=2.5\times10^{-5}\,\mathrm{s}^{-1}$~\cite{ponsioen2019analytic}. The coefficients of cubic spring and damper are chosen as $\kappa=6\,\mathrm{mN/{mm}^3}$ and $\gamma=-0.02\mathrm{mNs/{mm}^3}$.

In the following computations, the beam is uniformly discretized with 25 elements ($N_\mathrm{e}=25$) and hence the system has 50 degrees of freedom and a 100-dimensional phase space. In this case, the eigenvalues corresponding to the slowest eigenspace are
\begin{equation}
    \lambda_{1,2}=-0.0062\pm7.0005\mathrm{i}.
\end{equation}
We take the slowest eigenspace as the master subspace $\mathcal{E}$, construct two-dimensional SSM-based ROMs, and use them to extract ridges and trenches on the forced response surface of this system. Here, we set $\epsilon_\mathrm{lb}=1\times10^{-4}$ and $\epsilon_\mathrm{ub}=0.01$, $\Omega_\mathrm{lb}=6.96$ and $\Omega_\mathrm{ub}=7.04$ to limit the domain of the FRS.

We first set $\epsilon=\epsilon_\mathrm{ub}$ and compute the FRC of the system based on TV-SSM solution~\eqref{eq:ssm-time-varying} and TI-SSM solution~\eqref{eq:ssm-time-independent} to check whether TI-SSM is accurate enough. We find that $\mathcal{O}(5)$ expansion is sufficient to approximate the converged FRC.
As seen in Fig.~\ref{fig:cant_beam_ti_vs_tv}, the FRC based on the TI-SSM solution matches that of the TV-SSM solution. We conclude that the TI-SSM based predictions have sufficient accuracy.  Hence, we will use the TI-SSM solution~\eqref{eq:ssm-time-independent} in the rest of this example.

\begin{figure}[!ht]
\centering
\includegraphics[width=.45\textwidth]{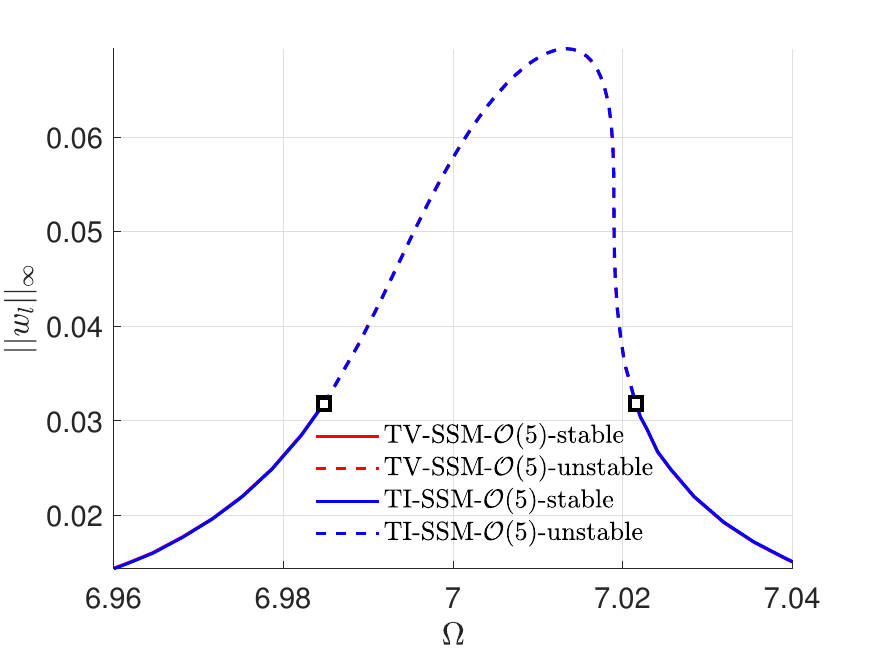}
\caption{FRC for the amplitude of periodic orbits at the end of the cantilever beam with $\epsilon=\epsilon_\mathrm{ub}=0.01$. Here and throughout this paper, $\mathcal{O}(k)$ denotes that the expansion truncation order for $\boldsymbol{W}(\boldsymbol{p})$ in~\eqref{eq:ssm-time-varying} and~\eqref{eq:ssm-time-independent}, namely, the autonomous part of SSM, is equal to $k$. TV-SSM and TI-SSM correspond to~\eqref{eq:ssm-time-varying} and~\eqref{eq:ssm-time-independent}, respectively. The blue lines coincide well with the red lines such that the red lines are nearly invisible.}
\label{fig:cant_beam_ti_vs_tv}
\end{figure}

Now we compute the FRS of the system using both the analytic prediction in Sect.~\ref{sec:FRS-ana} and the multidimensional atlas algorithm in Sect.~\ref{sec:atlas-2d}. The FRS obtained is shown in Fig.~\ref{fig:cant-frs}. In the upper panel, we show the FRS obtained from the analytic prediction, which took 7 seconds of computational time. In the lower panel, the FRS obtained from the multidimensional continuation algorithm shown, where the surface is approximated with 1500 polygons. The computational time for obtaining this FRS is about half an hour. We note that isolas (cf. the tip pointed by the green arrow in the left panel) are uncovered automatically via the FRS. By comparing the two panels, we observe that polygons near the isola region have much smaller sizes relative to other polygons away from the region. This non-uniform mesh is a result of the adaptation of continuation step sizes in the atlas algorithm. In contrast, we have used uniformly distributed grids to generate the analytic FRS. To capture the intricate surface around the tip, we used a fine mesh (with 42538 faces) for the analytic FRS.

\begin{figure}[!ht]
\centering
\includegraphics[width=.45\textwidth]{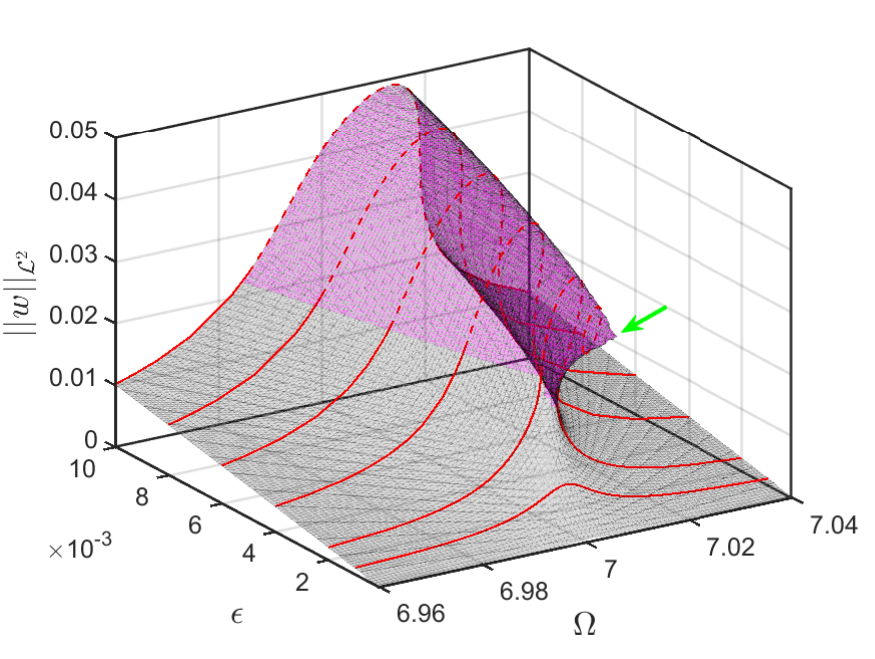}
\includegraphics[width=.45\textwidth]{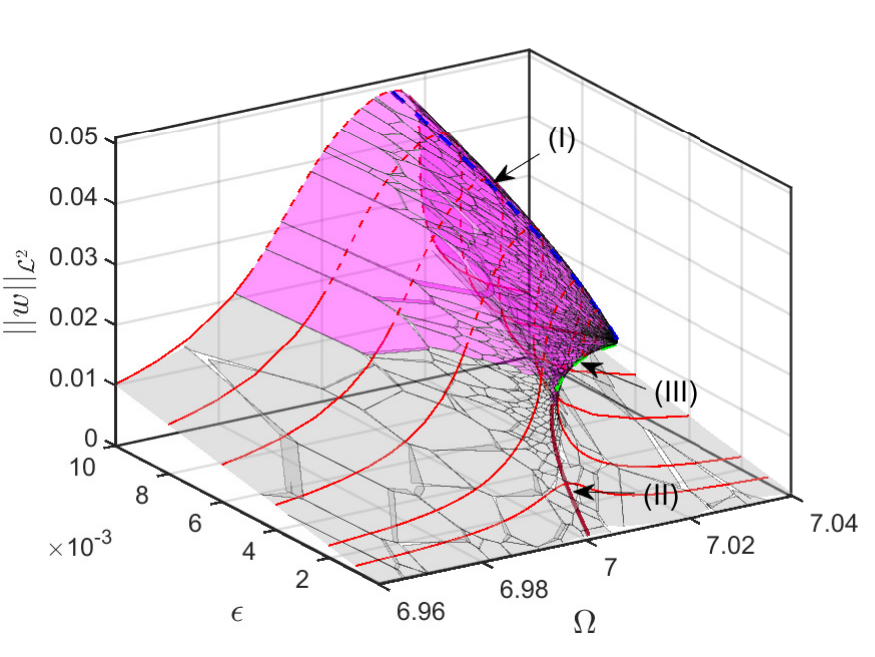}
\caption{Forced response surface of the cantilever beam with nonlinear support, obtained from the analytic prediction (upper panel), and multidimensional continuation of fixed points (lower panel). Some sampled FRCs of the full system are also presented for the purpose of validation. In the right panel, ridges, and trenches on the FRS are also provided. Here and throughout this paper, gray and magenta areas indicate stable and unstable forced responses and solid and dashed lines represent stable and unstable periodic solutions.}
\label{fig:cant-frs}
\end{figure}

To validate the above FRS obtained from SSM-based ROMS, we compute the FRCs of the full system sampled at $\epsilon\in\{1\times10^{-3},2\times10^{-3},4\times10^{-3},6\times10^{-3},8\times10^{-3},1\times10^{-2}\}$ using the collocation method implemented in the po-toolbox of \textsc{coco}. We choose the direct computation of FRCs instead of the FRS for validation because the computational cost of the FRS for such a high-dimensional system is significant. As seen in Fig.~\ref{fig:cant-frs}, the six sampled FRCs agree with the FRS obtained from SSM-based reduction. We note that the computational time for the sampled FRCs is about 6 hours, which shows a significant speed-up using the SSM-based reduction.

Next, we compute the ridges and trenches of the FRS using the solution procedure established in~\ref{sec:sol-p3}. As we will see, the computational time for locating these ridges and trenches via the successive continuation is just 31 seconds, much less than the half hour required for generating the entire FRS via the two-dimensional manifold continuation. 

We initialize $\epsilon=\epsilon_\mathrm{ub}$ and apply the successive continuation scheme to locate the ridge on the forced response curve. Indeed, we have a maximum point on the FRC shown in Fig.~\ref{fig:cant_beam_ti_vs_tv}. We expect a ridge consisting of this maximum under the variation of $\epsilon$. The maximum point is detected as a branch point and denoted by a blue cross marker on the red curve shown in Fig.~\ref{fig:cant_beam_ridges}. We follow the procedure in~\ref{sec:sol-p3} and obtain the ridge as expected. This ridge is plotted in a blue dashed line and marked as (I) in Fig.~\ref{fig:cant_beam_ridges}. All periodic orbits are unstable along this ridge, which is depicted by our use of the dashed line.

\begin{figure*}[!ht]
\centering
\includegraphics[width=.45\textwidth]{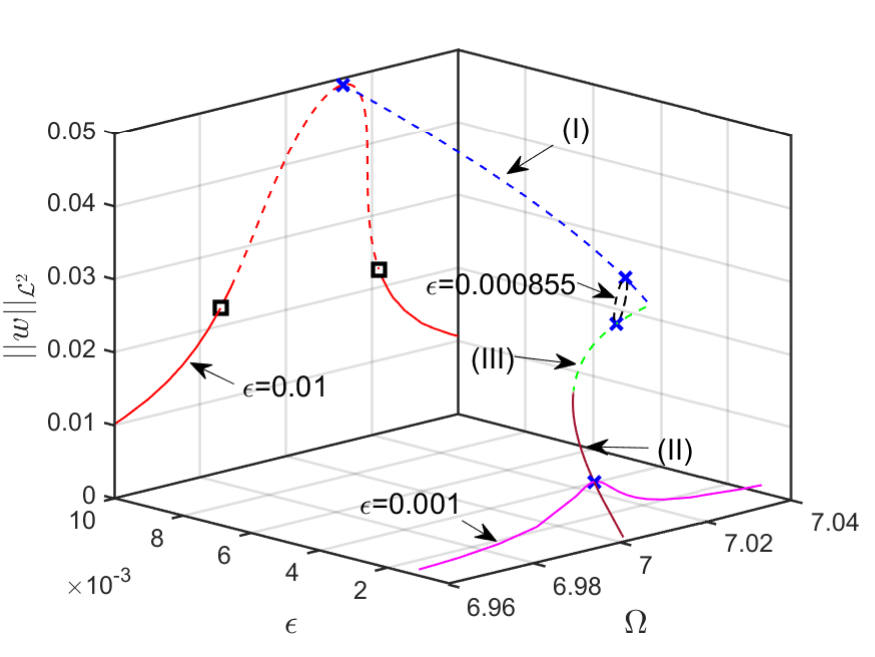}\\
\includegraphics[width=.45\textwidth]{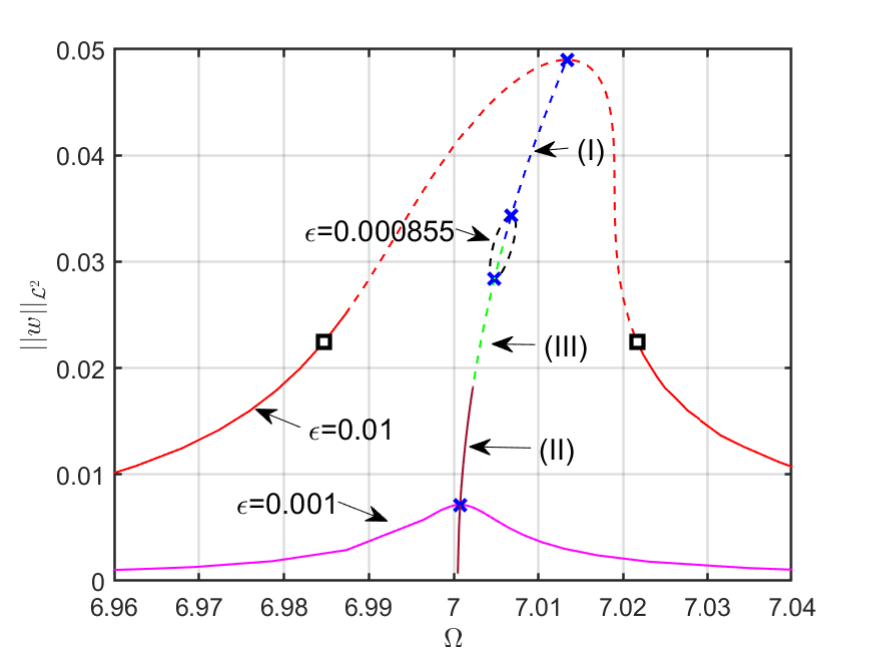}
\includegraphics[width=.45\textwidth]{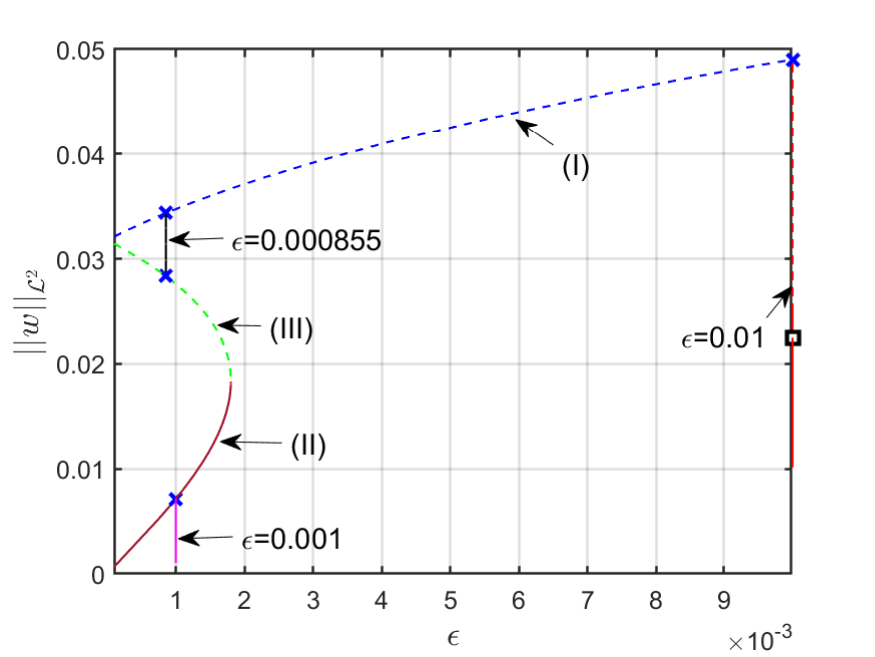}
\caption{Frequency response curves and resulting ridges and trenches in the forced response surface of the cantilever beam with cubic spring and damper support. The lower two panels give the projection of the upper panel onto $(\Omega,||w||_{\mathcal{L}^2}$ and $(\epsilon,||w||_{\mathcal{L}^2}$).}
\label{fig:cant_beam_ridges}
\end{figure*}

We note that the response along the ridge does not decrease to zero when $\epsilon\to0$. This indicates that the system admits a limit cycle in an unforced case and undergoes an isola bifurcation when $\epsilon\to0$, as illustrated in Fig.~\ref{fig:isola-simp}. Consequently, one can use the obtained ridges and trenches on the forced response surface to infer isola bifurcations.

\begin{figure}[!ht]
\centering
\includegraphics[width=.25\textwidth]{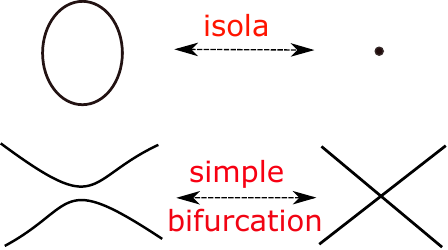}
\caption{A schematic plot of isola and simple bifurcations.}
\label{fig:isola-simp}
\end{figure}

We expect that the system has another family of periodic orbits under the addition of harmonic excitation. These periodic orbits are perturbed from the origin, which is a hyperbolic fixed point, and the response amplitude of these periodic orbits will decrease to zero when $\epsilon\to0$. We initialize with $\epsilon=0.001$ and $\Omega=6.96$ and apply the solution procedure established in~\ref{sec:sol-p3} to locate the ridge on the FRS that corresponds to this family of periodic orbits. As seen in Fig.~\ref{fig:cant_beam_ridges}, the FRC for $\epsilon=0.001$ consists of a maximum, which lies on a ridge shown as the brown solid curve marked as (II). Indeed, the response along this ridge decreases to zero when $\epsilon\to0$, as seen in Fig.~\ref{fig:cant_beam_ridges}.

In the last continuation run to generate segment (II) of the ridge, we found that when $\epsilon$ increases to a critical value of $\epsilon_\mathrm{simp}\approx 1.8020\times10^{-3}$, it cannot be increased further, as shown in the lower-right panel of Fig.~\ref{fig:cant_beam_ridges}. This critical value corresponds to a simple bifurcation illustrated in Fig.~\ref{fig:isola-simp}. At the simple bifurcation, the primary branch of the FRC is merged into the isolated (detached) branch. Consequently, the maximum on the primary branch merges with the minimum on the isolated branch when $\epsilon\to\epsilon_\mathrm{simp}$. Moreover, these two extrema disappear when $\epsilon>\epsilon_\mathrm{simp}$. This explains why $\epsilon$ cannot be increased further than $\epsilon_\mathrm{simp}$ along segment (II). Therefore, we can also use the obtained ridges and trenches to infer simple bifurcations.

As seen in Fig.~\ref{fig:isola-simp}, there is a local minimum in the isolated branch of an FRC. We initialize with a point on the segment (I) ($\epsilon=0.000855$) and apply the solution procedure established in~\ref{sec:sol-p3} to locate the trench on the FRS corresponds to this family of local minima. This trench is plotted in a green dashed line in Fig.~\ref{fig:cant_beam_ridges} and marked with (III). We see that segments (I) and (III) intersect at $\epsilon=0$ where the isola bifurcation occurs. In addition, segments (III) and (II) merge smoothly at $\epsilon=\epsilon_\mathrm{cusp}$, where the simple bifurcation is observed, as seen in the lower-right panel of Fig.~\ref{fig:cant_beam_ridges}.

We now provide a validation of the ridges and trenches obtained via our SSM-based ROMs. We apply the collocation method implemented in \textsc{coco}~\cite{COCO,ahsan2022methods,dankowicz2013recipes} to solve for Problem~\ref{P1}. In particular, the \texttt{po}-toolbox supports an automated construction of adjoint equations of periodic orbits~\cite{li2018staged}. We follow a successive continuation method similar to the procedure in~\ref{sec:sol-p3} to locate the ridges and trenches on the FRS of the full system. As seen in the left panel of Fig.~\ref{fig:cant-ridge}, the results from the collocation methods agree with the predictions from our SSM-based ROM. Here, the computation time of the collocation method on the full systems is nearly 1.5 days, while that of the SSM-based prediction is just about 31 seconds. %Therefore, the proposed model reduction has a significant speed-up gain relative to the collocation methods applied to the full system.

\begin{figure}[!ht]
\centering
\includegraphics[width=.45\textwidth]{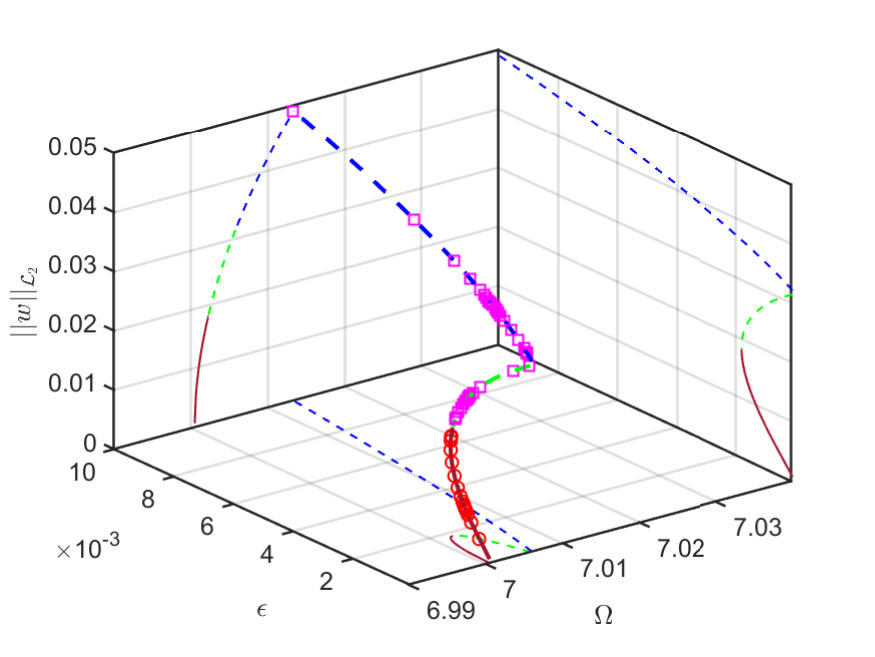}
\caption{Ridges and trenches obtained from SSM-based predictions (lines) and collocation methods (markers) applied to the full system. Here the magenta squares and red circles denote unstable and stable periodic orbits, respectively. The projection of these curves onto the three coordinate planes is also shown here.}
\label{fig:cant-ridge}
\end{figure}

\subsection{A square plate with 1:1 internal resonance}

As our second example, we consider a simply supported plate shown in the left panel of Fig.~\ref{fig:plate_mesh}~\cite{part-i,part-ii}. Here, $a$ and $b$ give the length and width of the plate. We are interested in the case of a square plate, i.e., $a=b$, such that the second and third bending modes of the system satisfy a 1:1 internal resonance because of the plate's geometric symmetry.

\begin{figure}[!ht]
\centering
\includegraphics[width=0.4\textwidth]{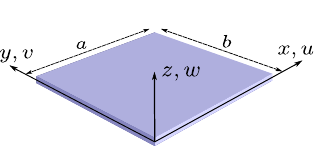}
\includegraphics[width=0.4\textwidth]{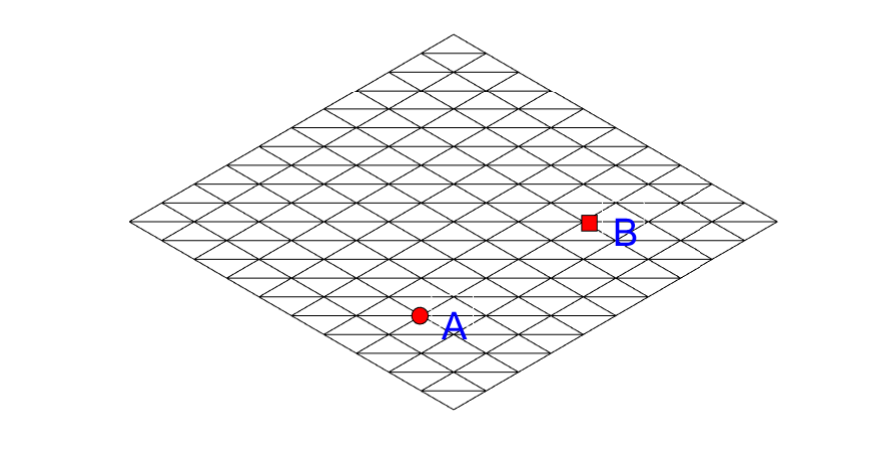}
\caption{A simply supported rectangular plate and a mesh for a square plate ($a=b$)~\cite{part-i}.}
\label{fig:plate_mesh}
\end{figure}

We model this square plate using von K\'arm\'an theory and hence the system has distributed nonlinearity, which is different from the previous example. In particular, both the in-plane and out-of-plane displacements are considered as unknowns, and nonlinear stretching forces due to large transverse displacement are taken into account. The nonlinear equations governing the motion of the plate can be found in~\cite{reddy2015introduction}.

We use flat facet shell elements developed in~\cite{allman1976simple,allman1996implementation} to discretize the unknown displacement field of this plate. Specifically, we use triangular elements to discretize the domain. An illustration of the mesh generated using the triangular elements is shown in the right panel of Fig.~\ref{fig:plate_mesh}. Here, we have 200 elements. Six degrees-of-freedom (DOFs) are introduced at each node of an element. With the boundary conditions from the simple supports applied, this discrete model has 606 DOFs, resulting in a 1212-dimensional phase space. The discrete model obtained is of the same form as~\eqref{eq:eom-beams}. We use an open-source finite element package~\cite{FEcode} to obtain the mass and stiffness matrices and the nonlinear internal force vector. For the damping matrix, we again consider the Rayleigh damping hypothesis $\boldsymbol{C}=\alpha\boldsymbol{M}+\beta\boldsymbol{K}$ with $\alpha=1$ and $\beta=4\times10^{-6}$.

We apply a transverse excitation $\epsilon100\cos\Omega t$ at point A (cf.~the lower panel of Fig.~\ref{fig:plate_mesh}) and study the forced response of the system under variations in $\epsilon$ and $\Omega$. We use the same geometric and material parameters as those in~\cite{part-i}. The natural frequencies of the second and third bending modes of the finite element model are obtained as $\omega_2 \approx 763.6\,\mathrm{rad/s}$ and $\omega_3\approx767.7\,\mathrm{rad/s}$. Their vibration mode shapes are shown in Fig.~\ref{fig:plate_mode}, where we see that points A and B represent the response of the third and second bending modes (cf.~the lower panel of Fig.~\ref{fig:plate_mesh}). We consider three measures to characterize the response of the system:
\begin{gather}
        ||w_\mathrm{A}||_{\mathcal{L}^2}=\sqrt{\frac{1}{T}\int_0^T w_\mathrm{A}^2(t)\mathrm{d}t},\nonumber\\||w_\mathrm{B}||_{\mathcal{L}^2}=\sqrt{\frac{1}{T}\int_0^T w_\mathrm{B}^2(t)\mathrm{d}t},\nonumber\\||E_\mathrm{k}||_{\mathcal{L}^2}=\sqrt{\frac{1}{2T}\int_0^T\dot{\boldsymbol{x}}^\mathrm{T}\boldsymbol{M}\dot{\boldsymbol{x}}\mathrm{d}t}\label{eq:obj-plate},
\end{gather}
where $w_\mathrm{A}$ and ${w}_\mathrm{B}$ denote the transverse displacements at points A and B, and $E_\mathrm{k}$ gives the kinetic energy of the plate.

\begin{figure}[!ht]
\centering
\includegraphics[width=0.45\textwidth]{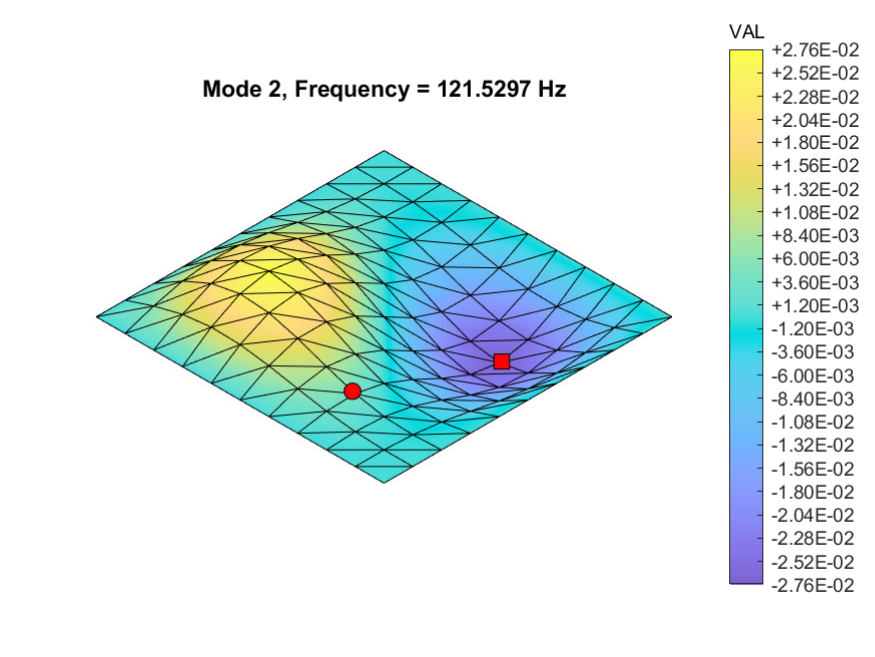}
\includegraphics[width=0.45\textwidth]{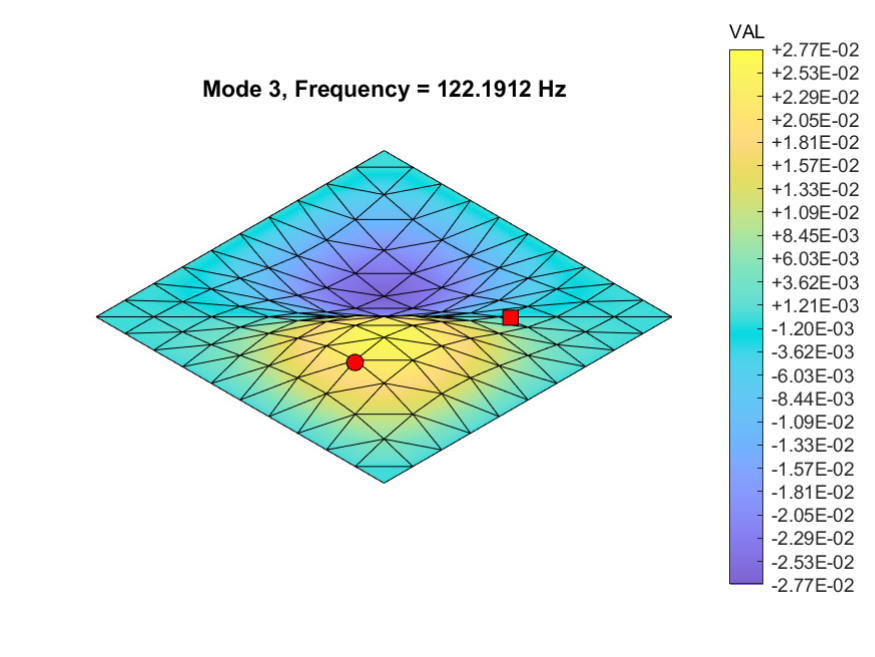}
\caption{Mode shapes of the second and third linear bending modes of the simply supported square plate~\cite{part-i}.}
\label{fig:plate_mode}
\end{figure}

With the Rayleigh damping assumption,  two pairs of complex conjugate eigenvalues associated with the second and the third bending mode are given as
\begin{gather}
\lambda_{3,4}\approx-1.7 \pm\mathrm{i}763.6\approx\pm\mathrm{i}\omega_2, \nonumber\\\lambda_{5,6}\approx-1.7 \pm\mathrm{i}767.7\approx\pm\mathrm{i}\omega_3.
\end{gather}
We take the four-dimensional spectral subspace corresponding to these eigenvalues as the master subspace $\mathcal{E}$ for SSM-based model reduction. In the following computations, we set $\Omega_\mathrm{lb}=0.95\mathrm{Im}(\lambda_3)=725.4$ and $\Omega_\mathrm{ub}=1.15\mathrm{Im}(\lambda_3)=878.1$. For the forcing amplitude, we take $\epsilon_\mathrm{lb}=0.01$ and $\epsilon_\mathrm{ub}=1$. 

Similarly to the previous example, we compare the FRCs obtained from the TV-SSM solution~\eqref{eq:ssm-time-varying} and the TI-SSM solution~\eqref{eq:ssm-time-independent} to conclude that the TI-SSM solution sufficiently approximates the converged FRC. We set $\epsilon=\epsilon_\mathrm{ub}$ and compute the corresponding FRC, where the predictions from TI-SSM agree with those from TV-SSM as shown in Fig.~\ref{fig:plate_plate_ti_vs_tv}. Here, we use an $\mathcal{O}(5)$ expansion for the SSM and its reduced dynamics to obtain predictions with sufficient accuracy ~\cite{part-i}.

\begin{figure}[!ht]
\centering
\includegraphics[width=.45\textwidth]{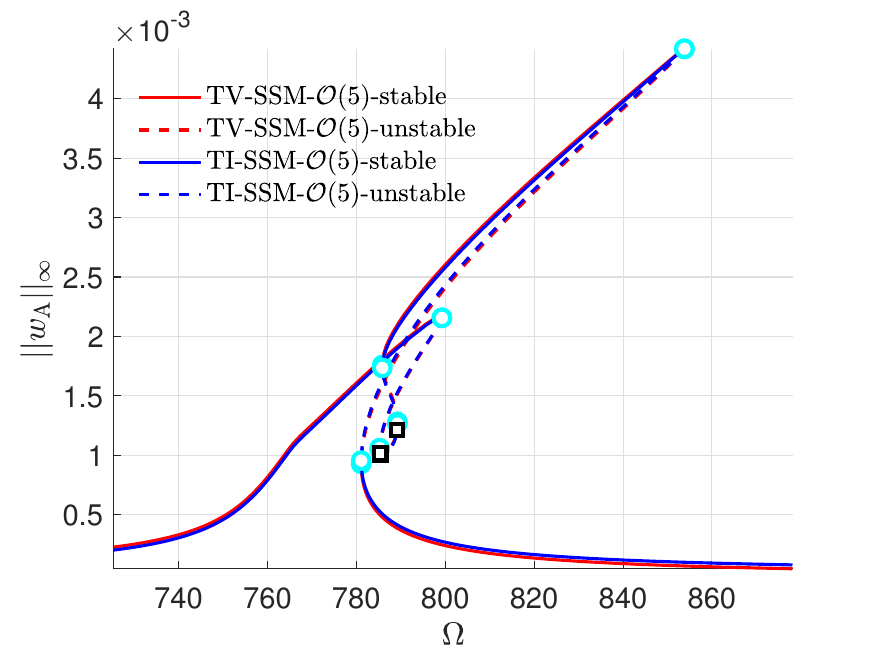}
\includegraphics[width=.45\textwidth]{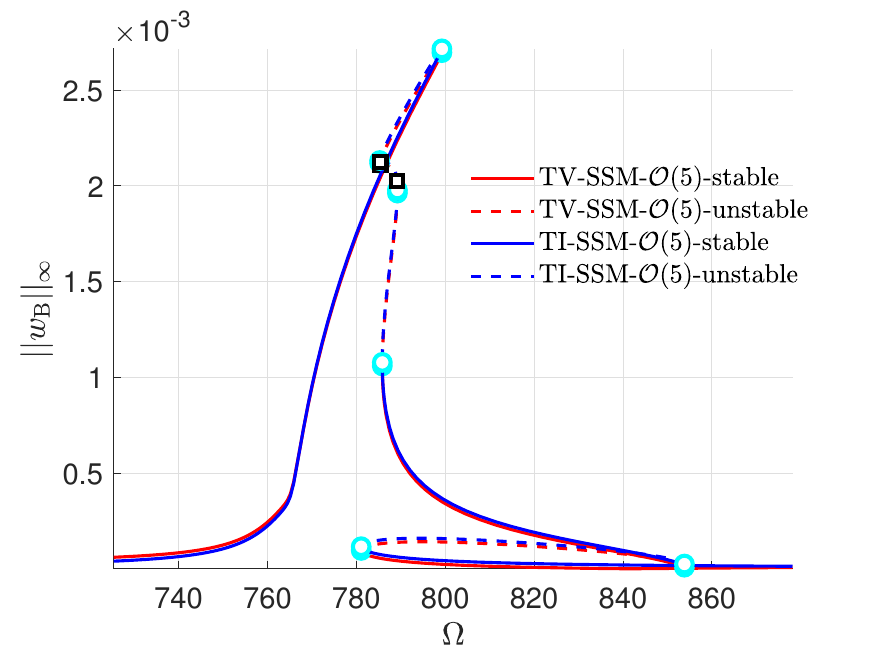}
\caption{Frequency response curves (amplitude of
periodic responses for the transverse displacement $w$ at points A and B) for the von K\'arm\'an plate discretized with 606 DOFs. Here $\epsilon=\epsilon_\mathrm{ub}=1$. TV-SSM and TI-SSM correspond to~\eqref{eq:ssm-time-varying} and~\eqref{eq:ssm-time-independent}, respectively.}
\label{fig:plate_plate_ti_vs_tv}
\end{figure}

Next, we compute the FRS for three amplitude objectives in~\eqref{eq:obj-plate}. As our analytic results for FRS predictions are only available for two-dimensional SSMs, we employ numerical continuation of fixed points of the reduced dynamics to compute the FRS associated with the four-dimensional SSM (see Sect.~\ref{sec:frs-ssm}). The FRS obtained is shown in Fig.~\ref{fig:plate_frs}. We observe that the FRS has a complicated geometry with self-intersections for sufficiently large values of $\epsilon$ (also see Fig.~\ref{fig:plate_plate_ti_vs_tv}). Here, the FRS is approximated via roughly 10,000 polygons, adaptively determined by the multidimensional continuation algorithm.

\begin{sloppypar}
We now validate the FRS obtained by our SSM-based ROM against the FRCs of the full system for the samples of the forcing amplitude $\epsilon\in\{0.25,0.5,0.75,1\}$. The collocation technique employed for full-system simulations in the previous example is not feasible here due to the large dimensionality of the full system~\cite{part-i}. As an alternative, we use a shooting method combined with parameter continuation to calculate the FRCs of the full system. Specifically, we use a~\textsc{coco}-based shooting toolbox~\cite{coco-shoot} for validation, where the Newmark scheme is used for the forward simulation. We use 1,000 time steps per excitation period for numerical integration~\cite{part-i}. At the same time, we increase the maximum continuation step size from the default value of 0.5 to 50 so that we can obtain the FRCs of the full system in a reasonable amount of time. As seen in Fig.~\ref{fig:plate_frs}, the FRCs obtained for the full system agree with the FRS predicted by our SSM-based ROM. The total computational time for the four FRCs in Fig.~\ref{fig:plate_frs}  is approximately 31 days. In contrast, we obtain the entire FRS in just 2 hours. %Thus, we obtain a significant speed-up from the SSM-based model reduction.
\end{sloppypar}

\begin{figure*}[!ht]
\centering
\includegraphics[width=.45\textwidth]{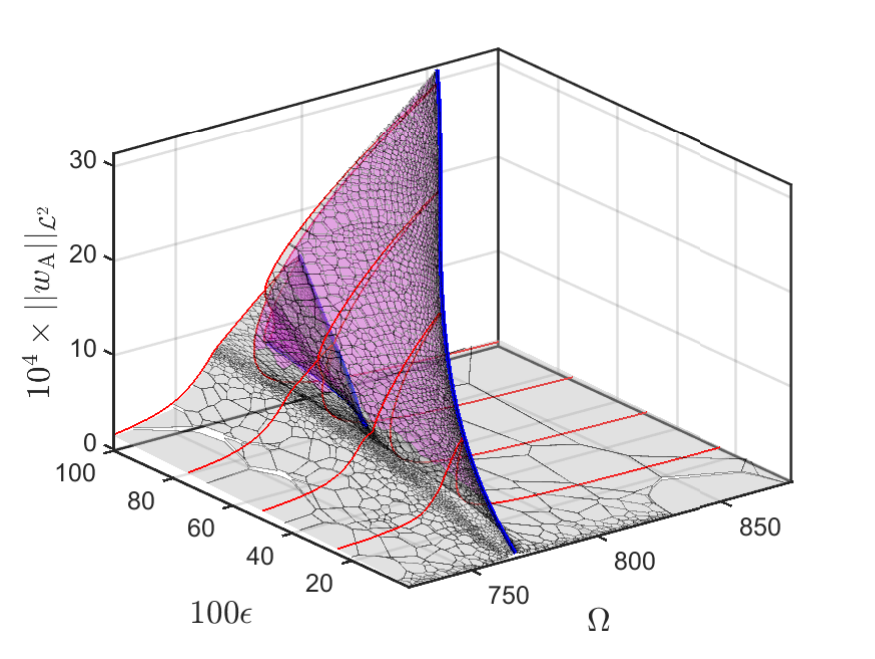}
\includegraphics[width=.45\textwidth]{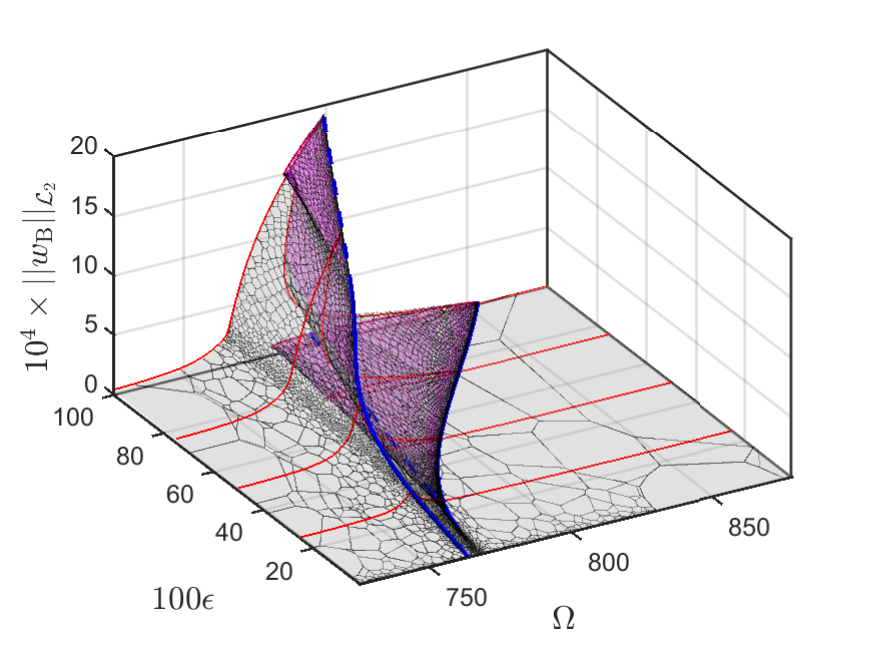}\\
\includegraphics[width=.45\textwidth]{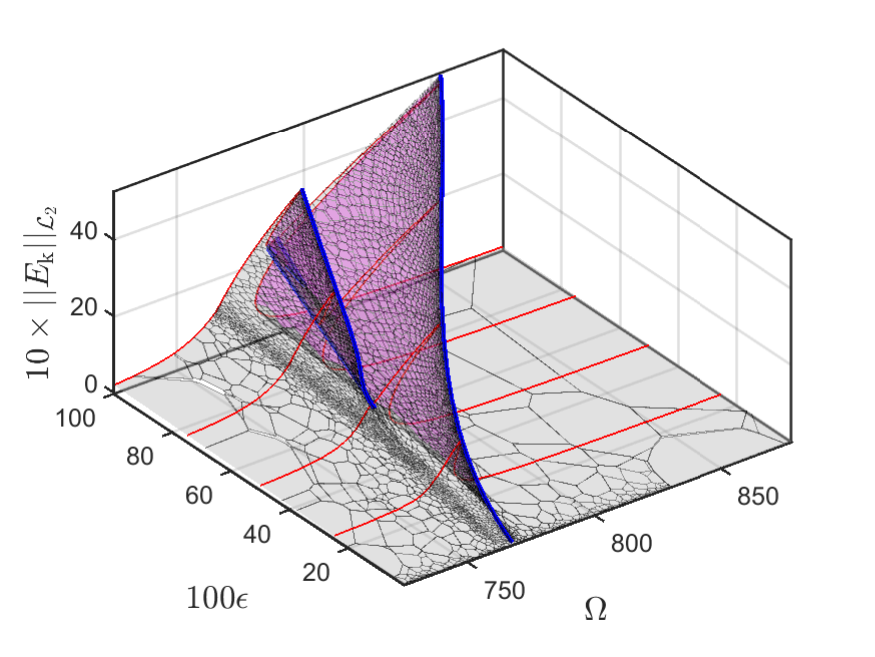}
\caption{FRSs and their ridges and trenches of the square plate. The top left and right panels give the FRS in terms of $||w_\mathrm{A}||_{\mathcal{L}^2}$ and $||w_\mathrm{B}||_{\mathcal{L}^2}$, while the lower panel gives the FRS for $||E_k||_{\mathcal{L}^2}$. Some sampled FRCs (red lines) of the full system are also plotted here for validation. We have rescaled $\epsilon$ and the amplitudes properly such that the ranges of all these variables have comparable magnitudes. This is important for the computation of these FRSs.}
\label{fig:plate_frs}
\end{figure*}

Now we compute the ridges and trenches on the FRS. As we will see, we obtain these curves in 4 minutes via the SSM-based ROM, which is significantly less than the two hours needed for the SSM-based FRS computation. To locate the ridges and trenches on the FRS, we first take $||w_\mathrm{A}||_{\mathcal{L}^2}$ as an optimization objective, initialize $\epsilon=\epsilon_\mathrm{ub}$ and apply the solution procedure established in~\ref{sec:sol-p3}. Along the FRC for $\epsilon=\epsilon_\mathrm{ub}$, three extrema are detected (two local maxima and one local minimum), as shown by the blue markers in the top-right panel of Fig.~\ref{fig:plate_ridge_3d}. Among the two maxima, we refer to the one with a higher value of $||w_\mathrm{A}||_{\mathcal{L}^2}$ as the global maximum and the other one as the local maximum. The trench emanating from the minimum and the ridge associated with the local maximum merge when $\epsilon$ decreases to $\epsilon \approx 0.525$. We further observe that the merger of a ridge and a trench results in their disappearance beyond the point of merger. On the other hand, the ridge emanating from the global maximum persists for $\epsilon\in(0,1]$ and the response amplitude along this ridge converges to zero when $\epsilon\to0$, as shown in the top-left panel of Fig.~\ref{fig:plate_ridge_3d}.

\begin{figure*}[!ht]
\centering
\includegraphics[width=.45\textwidth]{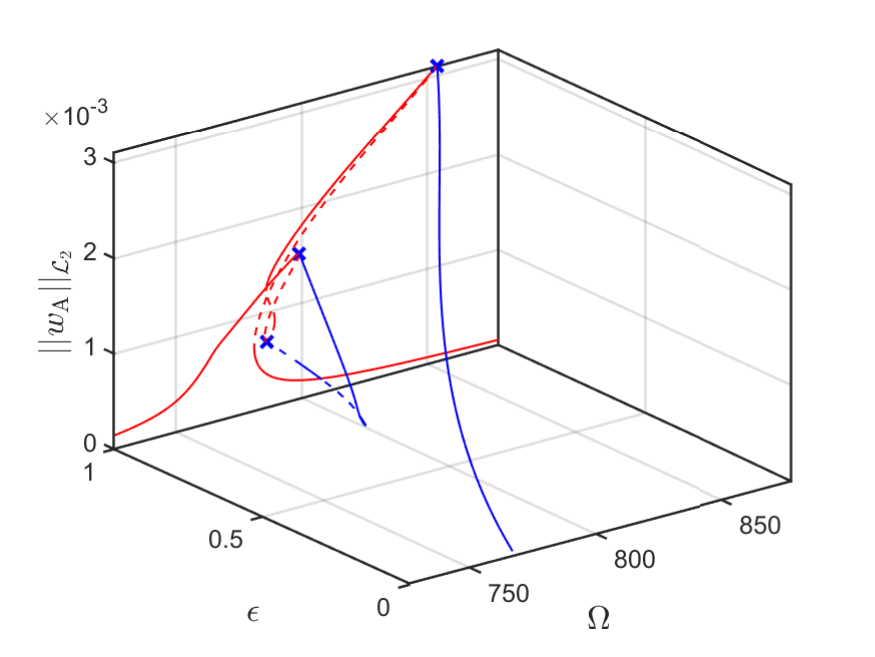}
\includegraphics[width=.45\textwidth]{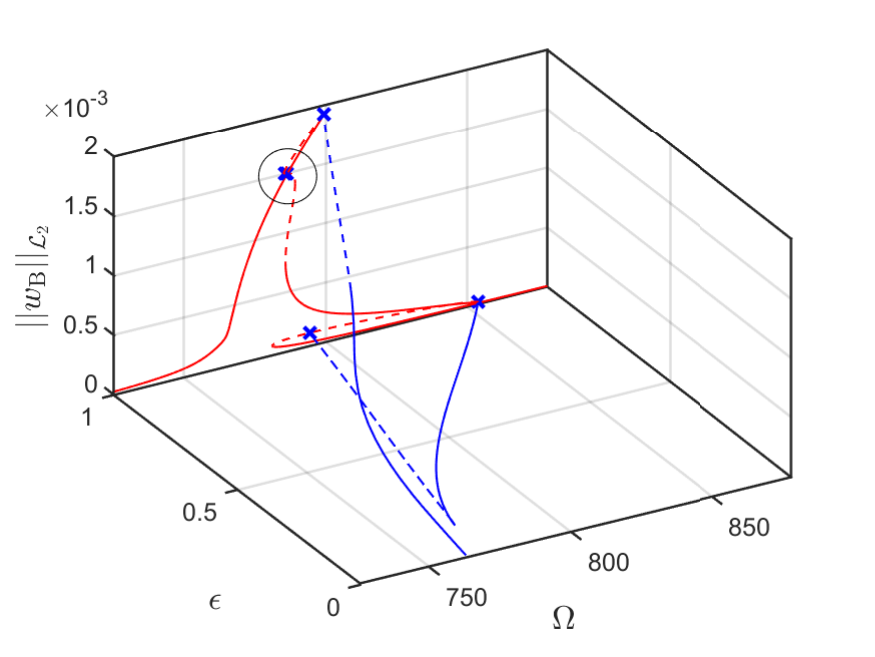}\\
\includegraphics[width=.45\textwidth]{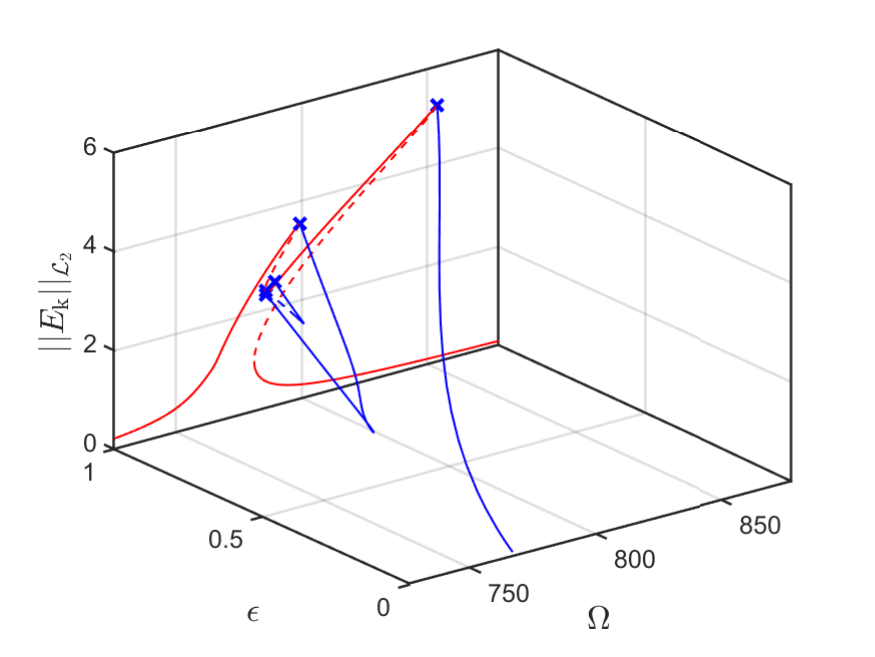}
\caption{FRCs (red lines) and resulting ridges and trenches (blue lines) in the FRS of the square plate in terms of different characterizations of response amplitude: $||w_\mathrm{A}||_{\mathcal{L}^2}$ (top-left panel), $||w_\mathrm{B}||_{\mathcal{L}^2}$ (top-right panel) and $||E||_{\mathcal{L}^2}$ (lower-panel). Here red lines are FRCs for $\epsilon=\epsilon_\mathrm{ub}$ and blue markers are local extrema on the FRCs.}
\label{fig:plate_ridge_3d}
\end{figure*}

Next, we take $||w_\mathrm{B}||_{\mathcal{L}^2}$ as the optimization objective and again apply the solution procedure established in~\ref{sec:sol-p3}. In this case, five extrema are detected along the FRC for $\epsilon=\epsilon_\mathrm{ub}$, as seen in the top-right panel of Fig.~\ref{fig:plate_ridge_3d}. Here, we observe two extrema that are close to each other within the circle. These two extrema quickly merge and disappear when $\epsilon$ decreases below $\epsilon=\epsilon_\mathrm{ub}=1$. We also observe a ridge merging with a trench as $\epsilon\to0.1332$. Similarly to the previous case, the ridge emanating from the global maximum persists for $\epsilon\in(0,1]$ and the response amplitude on this ridge converges to zero when $\epsilon\to0$.

Finally, we take $||E_\mathrm{k}||_{\mathcal{L}^2}$ as the optimization objective and repeat the analysis above. We again detect five extrema along the FRC for $\epsilon=\epsilon_\mathrm{ub}$, as seen in the lower panel of Fig.~\ref{fig:plate_ridge_3d}. We observe that a ridge and a trench merge near $\epsilon=0.8466$. Another ridge also merges with the second trench near $\epsilon=0.4905$. Once again, the ridge emanating from the global maximum persists for $\epsilon\in(0,1]$.

We conclude from the above discussion that the geometry of the FRS and its ridges and trenches depend on the choice of the optimization objective, especially, for mechanical systems with internal resonance. Indeed, for internally resonant systems, the FRSs constructed for modal response amplitudes may have significantly different features. The modal contributions to the FRSs constructed for some physical response amplitudes can change depending on the choice of the optimization objective. 

% The FRS in physical coordinates is resulted from the contributions of responses of all these modes. Depending on the choice of physical coordinates, each mode can have different levels of contributions. Consequently, the FRS.

%Therefore, the geometry of the FRSs in physical coordinates depends on the choice of the optimization objective that is defined in terms of the physical coordinates.

The computational times for generating the ridges and trenches in the three panels of Fig.~\ref{fig:plate_ridge_3d} are 60 seconds, 65 seconds, and 155 seconds, respectively. Indeed, these times are significantly less than the two hours required to obtain the entire FRS in Fig.~\ref{fig:plate_frs}.  We observe in Fig.~\ref{fig:plate_frs} that the ridges and trenches provide a skeleton of the FRS.

\subsection{A shallow shell with 1:2 internal resonance}

As our final example, we consider the nonlinear vibrations of a shallow-arc structure~\cite{part-i}, shown in Fig.~\ref{fig:shell_mesh}. Here, the shell is simply supported at the two opposite edges aligned along the $y-$axis in Fig.~\ref{fig:shell_mesh}.

\begin{figure}[!ht]
\centering
\includegraphics[width=0.4\textwidth]{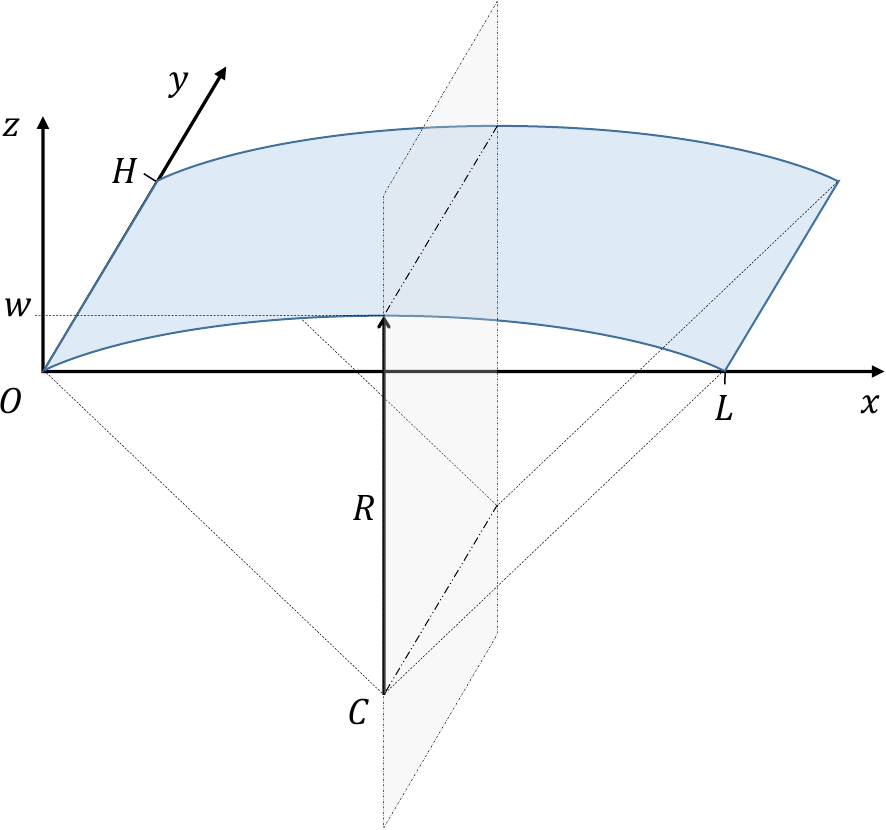}
\caption{The schematic of a shallow shell structure~\cite{jain2022compute,part-i}.}
\label{fig:shell_mesh}
\end{figure}

\begin{sloppypar}
The geometric and material properties of this shell can be found in~\cite{part-i}. We use the same finite-element model as in~\cite{part-i}. Specifically, the discrete model has 400 elements and 1,320 DOFs, resulting in a 2,640-dimensional phase space. With the chosen curvature, the first two bending modes of this structure admit a 1:2 internal resonance~\cite{part-i}. In particular, the eigenvalues of the first two pairs of modes of the discrete model are given by~\cite{part-i}
\begin{equation}
    \lambda_{1,2}=-0.30\pm\mathrm{i}149.22,\quad\lambda_{3,4}=-0.60\pm\mathrm{i}298.78.
\end{equation}

We apply a concentrated load $\epsilon100\cos\Omega t$ in the $z-$ direction at mesh node A with $(x,y)=(0.25L,0.5H)$. We are concerned with the forced response in terms of the $z$-displacements of node A and node B, where node B is located at $(x,y)=(0.5L,0.5H)$. We set $\Omega_\mathrm{lb}=0.92\mathrm{Im}(\lambda_1)=137.2857$, $\Omega_\mathrm{ub}=1.07\mathrm{Im}(\lambda_1)=159.6693$, $\epsilon_\mathrm{lb}=0.001$, and $\epsilon_\mathrm{ub}=0.1$ to extract the FRS around the first mode.
\end{sloppypar}

Similarly to the previous two examples, we compare the FRCs obtained from the TV-SSM solution~\eqref{eq:ssm-time-varying} and the TI-SSM solution~\eqref{eq:ssm-time-independent} to conclude that the TI-SSM solution sufficiently approximates the converged FRC. Specifically, we set $\epsilon=\epsilon_\mathrm{ub}$ and calculate the FRC using the TI-SSM and TV-SSM solutions, which match closely, as shown in Fig.~\ref{fig:shell_ti_vs_tv}. Therefore, we use the TI-SSM solutions to make faster FRS predictions in this example.

\begin{figure}[!ht]
\centering
\includegraphics[width=.45\textwidth]{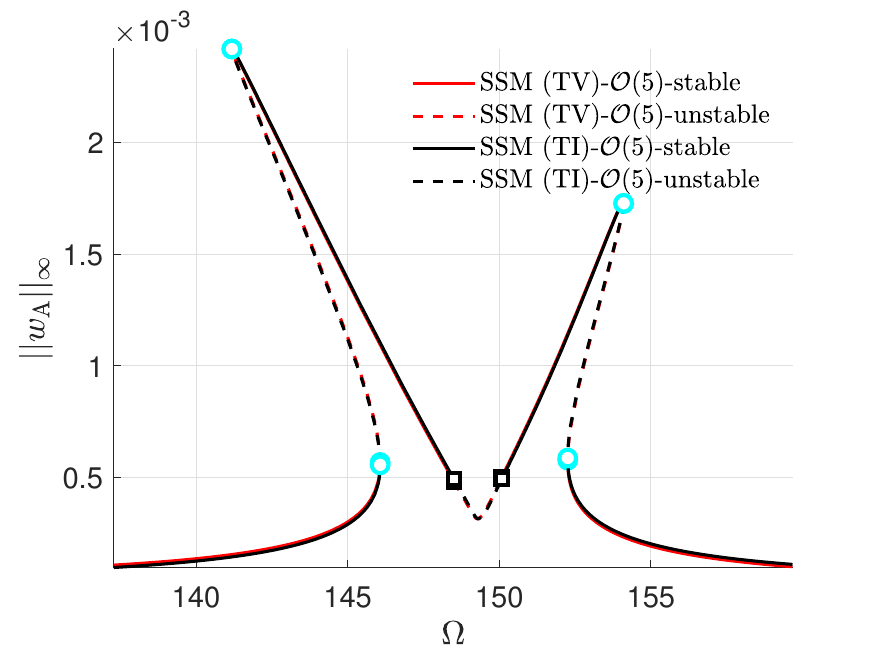}
\includegraphics[width=.45\textwidth]{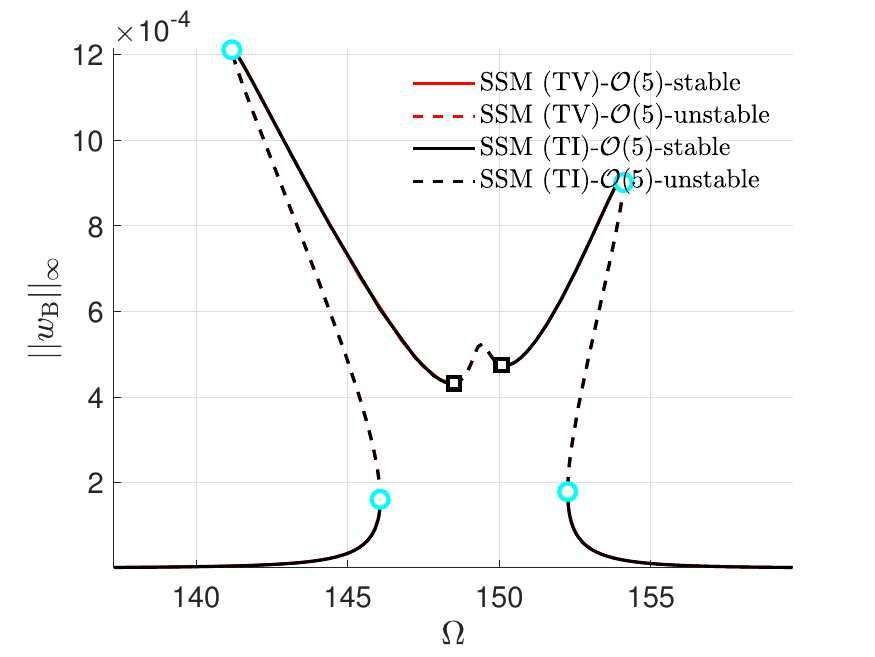}
\caption{FRCs (amplitude of periodic responses for the transverse displacement $w$ at points A and B) for the shallow shell discretized with 1320 DOFs. Here $\epsilon=\epsilon_\mathrm{ub}=0.1$. TV-SSM and TI-SSM correspond to~\eqref{eq:ssm-time-varying} and~\eqref{eq:ssm-time-independent}, respectively.}
\label{fig:shell_ti_vs_tv}
\end{figure}

For FRS computation, we use the response amplitude objectives $||w_\mathrm{A}||_{\mathcal{L}^2}$, $||w_\mathrm{B}||_{\mathcal{L}^2}$ (see~\eqref{eq:obj-plate} for detailed definitions), as well as $||w_\mathrm{A}||_{\mathcal{L}^\infty}$ and $||w_\mathrm{B}||_{\mathcal{L}^\infty}$ . Similarly to the previous example, we obtain the FRS shown in Fig.~\ref{fig:shell_frs} via the two-dimensional continuation of fixed points of the SSM-based ROM. We again observe that the geometry of the FRS depends on the choice of our amplitude objective. Indeed, in Fig.~\ref{fig:shell_frs}, we observe a local ridge near the primary trench in the FRS of point B for sufficiently large values of $\epsilon$, which is different to the FRS of point A.

%Comparing the upper and lower panels in Fig.~\ref{fig:shell_frs}, we observe that the geometry of the FRS is independent of the norms used to characterize the response amplitude. However, upon comparing the left and corresponding right panels, we observe that the FRS of point B is different from that of point A. In particular, a local ridge near the primary trench is observed in the FRS of point B when $\epsilon$ is large enough.

\begin{figure*}[!ht]
\centering
\includegraphics[width=.45\textwidth]{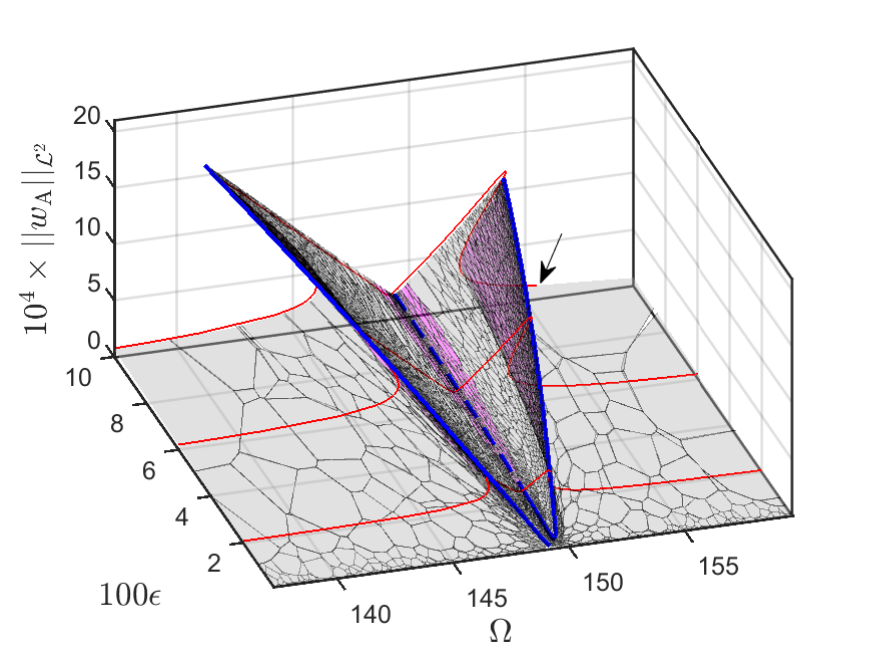}
\includegraphics[width=.45\textwidth]{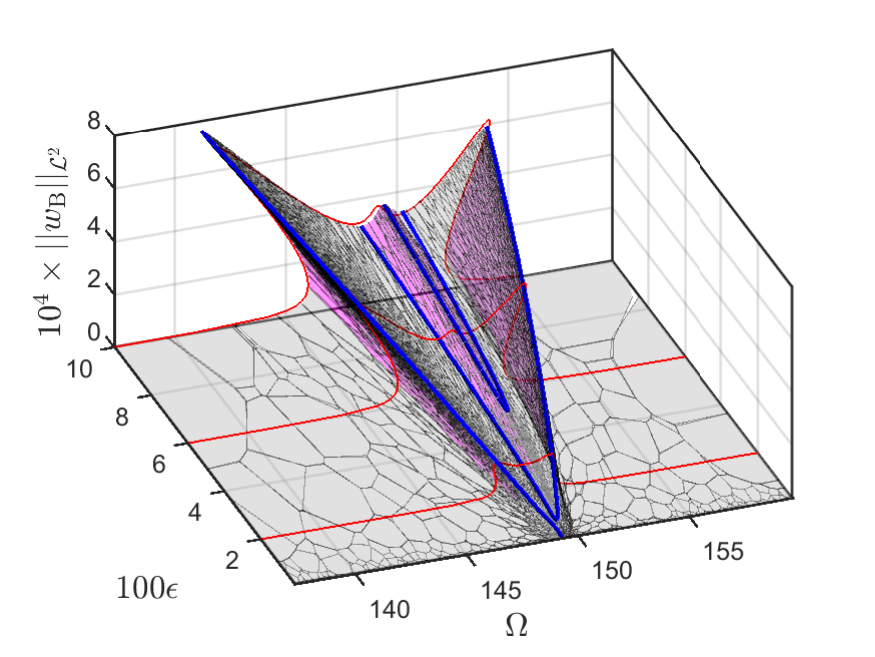}\\
\includegraphics[width=.45\textwidth]{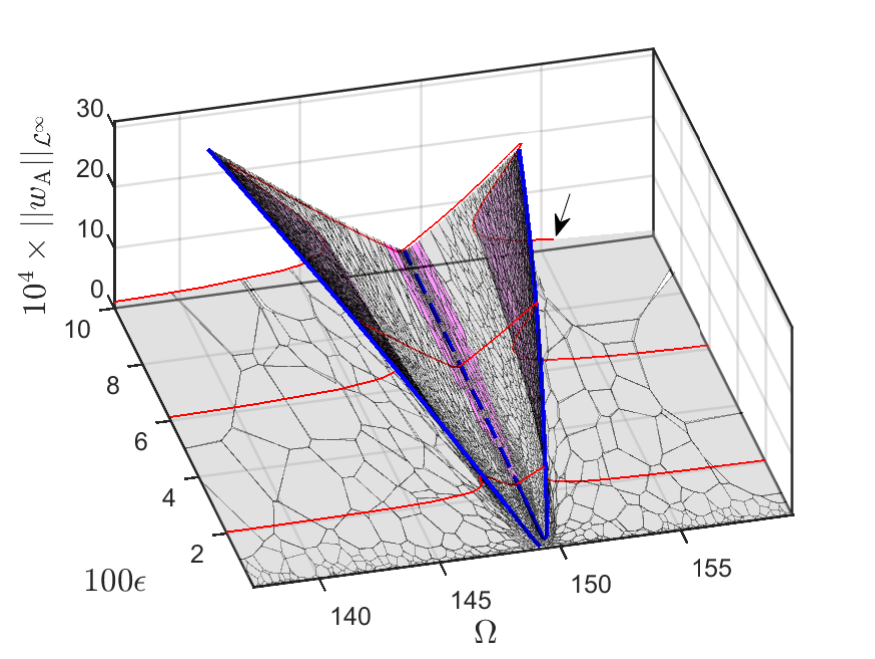}
\includegraphics[width=.45\textwidth]{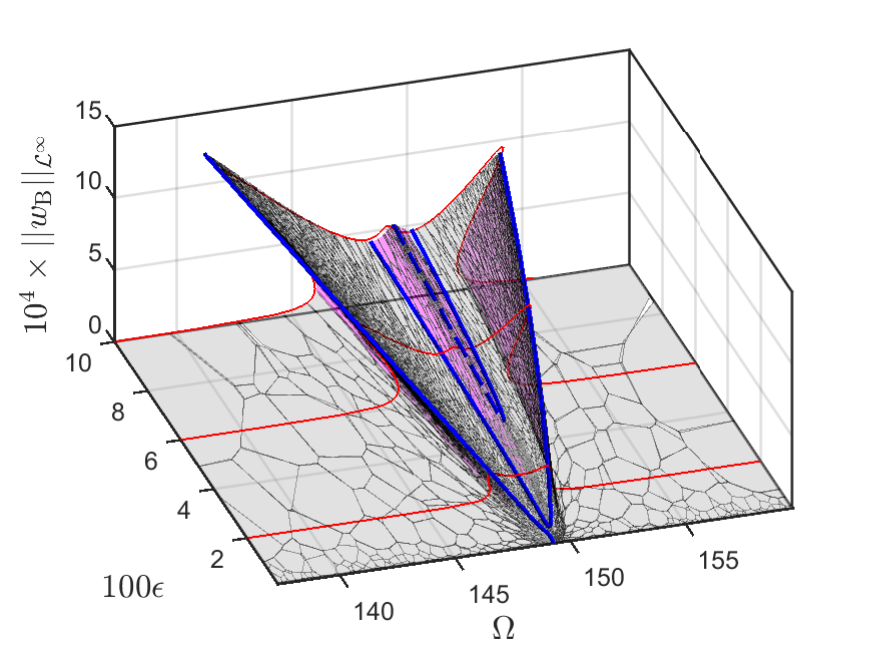}
\caption{FRS and its ridges and trenches (blue lines) of the shallow shell. The top left and right panels give the FRS in terms of $||w_\mathrm{A}||_{\mathcal{L}^2}$ and $||w_\mathrm{B}||_{\mathcal{L}^2}$, while the lower left and right panels give the FRS in terms of $||w_\mathrm{A}||_{\mathcal{L}^\infty}$ and $||w_\mathrm{B}||_{\mathcal{L}^\infty}$. Some sampled FRCs (red lines) of the full system are provided for the purpose of validation. We have rescaled $\epsilon$ and the amplitudes properly such that the ranges of all these variables have comparable magnitudes.}
\label{fig:shell_frs}
\end{figure*}

We now validate the FRS obtain via our SSM-based ROM against the FRCs of the full system for the forcing amplitude samples $\epsilon\in\{0.02, 0.06, 0.1\}$. Similarly to the previous example, we use the shooting method combined with parameter continuation~\cite{coco-shoot} to calculate the FRCs of the full system. As seen in Fig.~\ref{fig:shell_frs}, the three sampled FRCs lie close to the FRSs, which validates the accuracy of the SSM-based predictions. The computational times to obtain the three FRCs with $\epsilon=0.02, 0.06$ and 0.1 are approximately 90, 146, and 180 hours, respectively. We have set the computational time limit for each continuation run to be 180 hours. It turns out that the continuation run for the FRC with $\epsilon=0.1$  was terminated near  $\Omega=156$ rad/s (see the arrows in the left panels of Fig.~\ref{fig:shell_frs}) as it reached the set time limit. On the other hand, we recall that the computational time to obtain the entire FRS via the SSM-based ROM is only about 1.5 hours.% So we again obtain a significant speed-up gain from the SSM-based model reduction.

Next, we aim to obtain a skeleton of the FRS by computing the ridges and trenches of the FRS at a fraction of the cost associated with FRS computation. Taking $||w_\mathrm{A}||_{\mathcal{L}^2}$ as the optimization objective, initialize $\epsilon=\epsilon_\mathrm{ub}$, we apply the solution procedure in Sect.~\ref{sec:sol-p3} to obtain the ridges and trenches. We first compute the FRC for $\epsilon=\epsilon_\mathrm{ub}$, where two local maxima and one local minimum are detected, as shown by the blue markers in the upper-left panel of Fig.~\ref{fig:shell_ridge_3d}. Among the two maxima, the one with $\Omega\approx141$ is the global maximum, while the one with $\Omega\approx154$ is a local maximum. The trench associated with the local minimum merges with the ridge emanating from the local maximum as $\epsilon\to0$. The remaining ridge persists as the global maximum of FRCs in the computational domain and the response amplitude $||w_\mathrm{A}||_{\mathcal{L}^2}$ converges to zero along this ridge as $\epsilon\to0$.

\begin{figure*}[!ht]
\centering
\includegraphics[width=.45\textwidth]{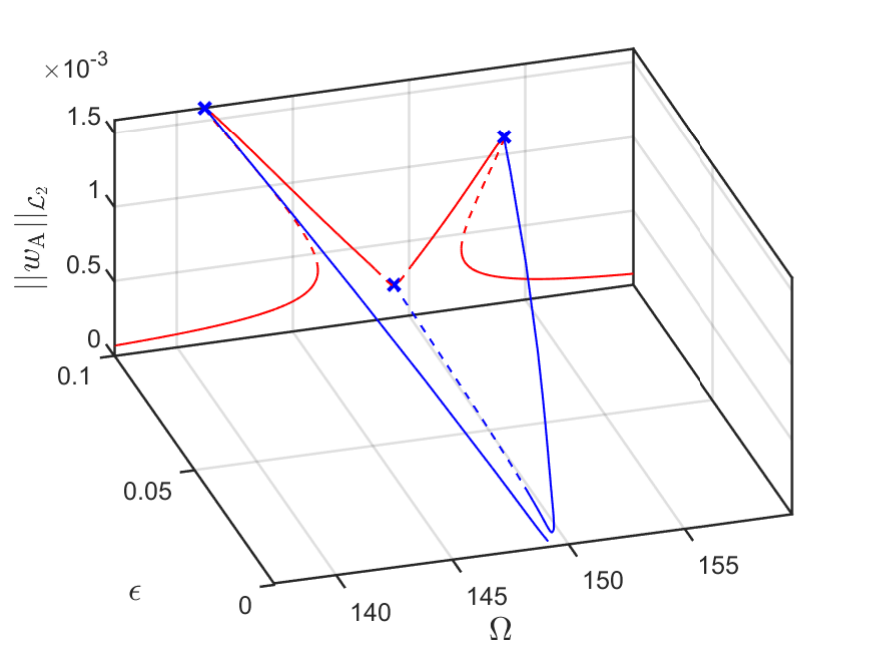}
\includegraphics[width=.45\textwidth]{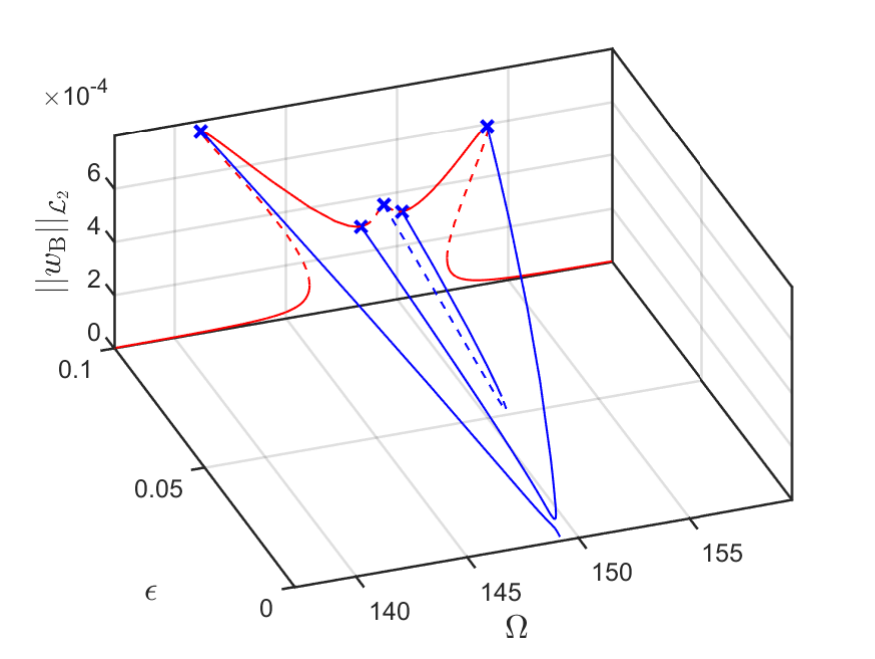}\\
\includegraphics[width=.45\textwidth]{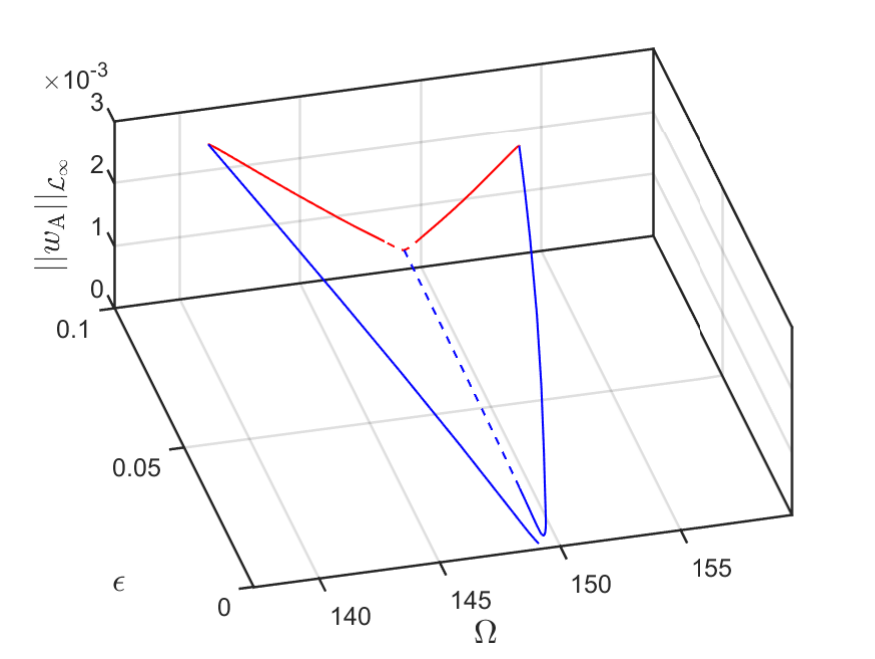}
\includegraphics[width=.45\textwidth]{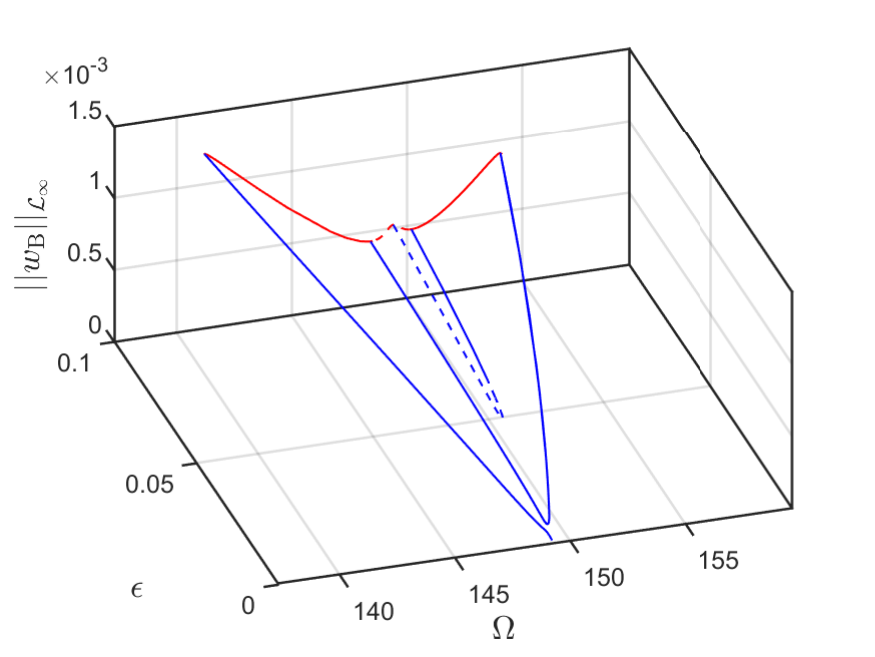}
\caption{FRCs (red lines) and resulting ridges and trenches (blue lines) in the FRS of the shallow shell. The upper left and right panels show the results for $||w_\mathrm{A}||_{\mathcal{L}^2}$ and $||w_\mathrm{B}||_{\mathcal{L}^2}$, while the lower left and right panels present the results for $||w_\mathrm{A}||_{\mathcal{L}^\infty}$ and $||w_\mathrm{B}||_{\mathcal{L}^\infty}$.}
\label{fig:shell_ridge_3d}
\end{figure*}

We now consider the optimization objective to $||w_\mathrm{B}||_{\mathcal{L}^2}$ and repeat the above computation of ridges and trenches. For this optimization objective, five extrema (three local maxima and two local minima) are detected along the FRC at $\epsilon=\epsilon_\mathrm{ub}$. As seen in the upper-right panel of Fig.~\ref{fig:shell_ridge_3d}, a ridge-trench pair around $\Omega=148$ merges to disappear near $\epsilon=0.035$. Another ridge-trench pair merges around $\epsilon=0.006$. Similarly to the previous optimization objective, the ridge corresponding to the global maximum persists in the computational domain, and the response amplitude along this ridge converges to zero as $\epsilon\to0$.

Finally, we consider the amplitude objectives in the $\mathcal{L}^\infty$ norm instead of those in the $\mathcal{L}^2$ norm earlier. We demonstrate the effectiveness of the solution procedure proposed in Sect.~\ref{sec:sol-p4} for the optimization objective based on the $\mathcal{L}^\infty$ norm. For the objective $||w_\mathrm{A}||_{\mathcal{L}^\infty}$, the ridges and trenches are shown in the lower-left panel of Fig.~\ref{fig:shell_ridge_3d}. We observe that the FRC in this panel is terminated at two points that are very close to the intersection points of the two ridges and the FRC. These two terminations occur due to a saddle-node bifurcation on the FRC (see Remark~\ref{rmk-sn}). For the optimization objective $||w_\mathrm{B}||_{\mathcal{L}^\infty}$, we obtain the ridges and trenches shown in the lower-right panel of Fig.~\ref{fig:shell_ridge_3d}. By comparing the upper and lower panels in Fig.~\ref{fig:shell_ridge_3d}, we observe that the ridges and trenches are qualitatively similar for the optimization objectives in the $\mathcal{L}^2$ and  $\mathcal{L}^\infty$ norms.

The computational times for generating the ridges and trenches in the four panels of Fig.~\ref{fig:shell_ridge_3d} are 71, 75, 77, and 81 seconds, which is significantly less than the 1.5 hours used to generate the entire FRS in Fig.~\ref{fig:shell_frs}. We observe in Fig.~\ref{fig:shell_frs} that the ridges and trenches provide a skeleton of the FRS. Therefore, to characterize the forced response, it is worthwhile to compute only the ridges and trenches of the FRS instead of the entire FRS.  

% COMPUTATIONAL TIME L2 NORM 71 and 75, 77 and 81

\section{Conclusion}

We have developed a new, SSM-based approach for computing the forced response surface (FRS) of harmonically excited high-dimensional mechanical systems including those with internal resonances. To this end, we constructed low-dimensional reduced-order models (ROMs) for such systems using spectral manifolds (SSMs). We computed the FRS as a two-dimensional manifold of fixed points of the SSM-based ROMs. For systems without internal resonance, we obtained analytic expressions for the FRS via the SSM-based ROMs. For general systems with possible internal resonances, we have used multidimensional manifold continuation to cover the FRS as a manifold of fixed points of the SSM-based ROMs. Since manifold covering may still be a demanding task, we further illustrated how to extract the ridges and trenches on the FRS directly without computing the entire FRS. We achieved this extraction via a successive continuation technique applied to an augmented continuation problem that is derived from the first-order necessary conditions of appropriately defined optimization problems on the SSM-based ROMs. These ridges and trenches serve as a skeleton of the FRS and  their computation provides a fast characterization of the FRS.

We have also demonstrated the accuracy and efficiency of the SSM-based reduction method using three examples. In the first example, a 50 DOF cantilever beam with a nonlinear support spring and damper was studied. This example demonstrates an analytic prediction of the FRS via a ROM based on a two-dimensional SSM. In addition, we showed that this FRS automatically detects the existence and bifurcation of isolas in the given range of forcing amplitude. We validated the accuracy of our SSM-based FRS prediction against 6 sampled FRCs of the full system via the collocation method. We also calculated the ridges and trenches of the SSM-based FRS in 31 seconds via the successive continuation method. We validated these ridges and trenches using the collocation method applied to an optimization problem for periodic orbits of the full system, which took approximately 1.5 days.

Next, we studied the forced response of a 606 DOF von K\'am\'an plate with 1:1 internal resonance. Here, we constructed a 4-dimensional SSM-based ROM for this internally resonant system. Based on the SSM-based ROM, we computed the ridges and trenches on the FRS in three minutes and the entire FRS in two hours. We validated our SSM-based predictions using a shooting method combined with parameter continuation to compute four sampled FRCs of the full system, which took more than 24 days. %So we again have a significant speed-up gain from the SSM-based model reduction. 

In the last example, we investigated the forced response of a 1,320-DOF shell structure with 1:2 internal resonance. We again constructed a 4-dimensional SSM-based ROM for the system taking into account the internal resonance. We computed the ridges and trenches on the FRS via our SSM-based ROM in less than 2 minutes, and the entire FRS in 1.5 hours. We validated the accuracy our SSM-based predictions  against three sampled FRCs of the full system calculated using the shooting method. The computational time for these three FRCs was more than 20 days, which again shows the significant speed-up gain from the SSM-based model reduction.

The computations performed in this study can be applied to systems with configuration constraints~\cite{li2022model} as well. While we have computed the FRS of periodic orbits in this work, it is instructive to extend this procedure to the computation of FRS of quasi-periodic orbits~\cite{part-ii}. %Furthermore, one can also extract the FRS via data-driven SSM-based ROMs~\cite{cenedese2022data}, which needs to be further studied and demonstrated.

\section*{Acknowledgement}
We are grateful to Mattia Cenedese for useful discussions and helpful comments on analytic solutions for forced response surfaces. ML gratefully acknowledges support by the National Natural Science Foundation of China (No. 12302014).

\appendix

\section{Theorem on periodic SSM}
\label{sec:app-ssm-existence}
\begin{theorem}
\label{th:SSM-existence-uniqueness}
Let $\spect(\mathcal{E}) = \{\lambda^\mathcal{E}_1,\bar{\lambda}^\mathcal{E}_1,\cdots,\lambda^\mathcal{E}_m,\bar{\lambda}^\mathcal{E}_m\}$ and define  $\spect(\boldsymbol{\Lambda})=\{\lambda_1,\cdots,\lambda_{2n}\}$.
Under the non-resonance condition
\begin{gather}
\boldsymbol{a}\cdot\mathrm{Re}(\boldsymbol{\lambda}^\mathcal{E})+\boldsymbol{b}\cdot\mathrm{Re}(\bar{\boldsymbol{\lambda}}^\mathcal{E})\neq \mathrm{Re}(\lambda_k),\nonumber\\
\forall\,\,\lambda_k\in\spect(\boldsymbol{\Lambda})\setminus\spect(\mathcal{E}),\nonumber\\
\forall\,\,\boldsymbol{a},\boldsymbol{b}\in\mathbb{N}_0^m,\,\,2\leq |\boldsymbol{a}+\boldsymbol{b}|\leq\Sigma(\mathcal{E}),
\end{gather}
where the \emph{absolute spectral quotient} $\Sigma(\mathcal{E})$ of $\mathcal{E}$ is defined as
\begin{equation}
\Sigma(\mathcal{E}) = \mathrm{Int}\left(\frac{\min_{\lambda\in\spect(\boldsymbol{\Lambda})}\mathrm{Re}\lambda}{\max_{\lambda\in\spect(\mathcal{E})}\mathrm{Re}\lambda}\right).
\end{equation}
Assume further that $r>\Sigma(\mathcal{E})$. Then for any $\epsilon>0$ small enough, the following hold for system~\eqref{eq:full-first}:
\begin{enumerate}[label=(\roman*)]
\item There exists a $2m$-dimensional, time-periodic, class $C^r$ SSM $\mathcal{W}(\mathcal{E},\Omega t)$ that depends smoothly on the parameter $\epsilon$.
\item The SSM $\mathcal{W}(\mathcal{E},\Omega t)$ is unique among all $C^{\Sigma(\mathcal{E})+1}$ invariant manifolds satisfy (i).
\item $\mathcal{W}(\mathcal{E},\Omega t)$ can be viewed as an embedding {of an open set in the reduced coordinates $(\boldsymbol{p},\phi)$} into the phase space of system~\eqref{eq:full-first} via the map
\begin{equation}
\boldsymbol{W}_{\epsilon}(\boldsymbol{p},\phi):{\mathbb{C}^{2m}}\times{S}^1\to\mathbb{R}^{2n} .
\end{equation}
\item There exists a polynomial {series} {$\boldsymbol{R}_{\epsilon}(\boldsymbol{p},\phi):\mathbb{C}^{2m}\times{S}^1\to \mathbb{C}^{2m}$} satisfying the invariance equation
\begin{align}
\label{eq:invariance}
& \boldsymbol{B}\left({D}_{\boldsymbol{p}}\boldsymbol{W}_{\epsilon}(\boldsymbol{p},\phi) \boldsymbol{R}_{\epsilon}(\boldsymbol{p},\phi)+{D}_{\phi}\boldsymbol{W}_{\epsilon}(\boldsymbol{p},\phi) \Omega\right)\nonumber\\
&=\boldsymbol{A}\boldsymbol{W}_{\epsilon}(\boldsymbol{p},\phi)+\boldsymbol{F}( \boldsymbol{W}_{\epsilon}(\boldsymbol{p},\phi))+\epsilon\boldsymbol{F}^{\mathrm{ext}}({\phi}),
\end{align}
such that the reduced dynamics on the SSM $\mathcal{W}(\mathcal{E},\Omega t)$ can be expressed as
\begin{equation}
\label{eq:red-dyn}
\dot{\boldsymbol{p}} = \boldsymbol{R}_\epsilon(\boldsymbol{p},\phi),\quad \dot{\phi}=\Omega.
\end{equation}
\end{enumerate}
\end{theorem}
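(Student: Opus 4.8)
The plan is to reduce the statement to the autonomous spectral submanifold theory of~\cite{haller2016nonlinear} via an autonomization of~\eqref{eq:full-first}. First I would append the phase variable $\phi$ governed by $\dot{\phi}=\Omega$, obtaining an autonomous system on the extended phase space $\mathbb{R}^{2n}\times S^1$; to make the smooth $\epsilon$-dependence transparent I would also carry $\epsilon$ as a trivial parameter with $\dot{\epsilon}=0$. At $\epsilon=0$ the $\phi$-equation decouples, the trivial equilibrium $\boldsymbol{z}=\boldsymbol{0}$ becomes a normally hyperbolic periodic orbit $\gamma_0=\{\boldsymbol{0}\}\times S^1$ with Floquet spectrum $\{\lambda_1,\dots,\lambda_{2n}\}$ together with one trivial exponent along $\gamma_0$, and the master subspace lifts to the invariant subbundle $\mathcal{E}\times S^1$ over $\gamma_0$ with fibrewise spectrum $\spect(\mathcal{E})$. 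Applying the existence-and-uniqueness theorem for autonomous SSMs to the $\epsilon=0$ system produces $\mathcal{W}(\mathcal{E})$, and then smooth dependence of SSMs on parameters (equivalently, persistence of the normally hyperbolic construction) extends it to a $C^r$, time-periodic invariant manifold $\mathcal{W}(\mathcal{E},\Omega t)$ for $0<\epsilon\ll1$ that is $\mathcal{O}(\epsilon)$ $C^r$-close to $\mathcal{E}$; this yields items (i) and (ii).

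The key step is to verify that the hypotheses of the autonomous theorem translate into the conditions stated here. The stated non-resonance condition is precisely the outer non-resonance requirement: it forbids $\boldsymbol{a}\cdot\mathrm{Re}(\boldsymbol{\lambda}^\mathcal{E})+\boldsymbol{b}\cdot\mathrm{Re}(\bar{\boldsymbol{\lambda}}^\mathcal{E})$ from equalling $\mathrm{Re}(\lambda_k)$ for $\lambda_k\notin\spect(\mathcal{E})$ and all $2\le|\boldsymbol{a}+\boldsymbol{b}|\le\Sigma(\mathcal{E})$, which is what decouples $\mathcal{E}$ from the remaining spectrum up to the spectral-quotient order; the absolute spectral quotient $\Sigma(\mathcal{E})$ encodes the spectral gap, and the assumption $r>\Sigma(\mathcal{E})$ provides the smoothness surplus that selects the unique smoothest invariant manifold among all $C^{\Sigma(\mathcal{E})+1}$ candidates. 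I would note that the near inner resonances~\eqref{eq:res-inner} are internal to $\mathcal{E}$ and thus do not obstruct existence or uniqueness; likewise the near external resonance~\eqref{eq:res-forcing} does not enter the outer non-resonance count. Both only affect which monomials and Fourier harmonics are retained in the reduced vector field.

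For items (iii) and (iv) I would invoke the parameterization method: one posits an embedding $\boldsymbol{W}_{\epsilon}(\boldsymbol{p},\phi)$ and a reduced field $\boldsymbol{R}_{\epsilon}(\boldsymbol{p},\phi)$ and requires the invariance equation~\eqref{eq:invariance}, then solves it as a formal series in $\boldsymbol{p}$ (and, if needed, in $\epsilon$), Fourier-expanded in $\phi$. At each polynomial order the invariance equation reduces to a linear cohomological equation for the new coefficients of $\boldsymbol{W}_{\epsilon}$ and $\boldsymbol{R}_{\epsilon}$; the outer non-resonance condition guarantees that the components of $\boldsymbol{W}_{\epsilon}$ transverse to $\mathcal{E}$ are uniquely solvable, while the near inner resonances~\eqref{eq:res-inner} and the forcing resonance~\eqref{eq:res-forcing} identify the resonant monomials and harmonics that must be absorbed into $\boldsymbol{R}_{\epsilon}$ (this is the normal-form style parameterization underlying~\eqref{eq:ode-reduced-slow-polar}). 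Truncating the series at the order allowed by $r>\Sigma(\mathcal{E})$ and using the same estimates as in~\cite{haller2016nonlinear} produces the polynomial series $\boldsymbol{R}_{\epsilon}$ and the reduced dynamics~\eqref{eq:red-dyn}.

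I expect the main obstacle to be the bookkeeping in the periodic, non-autonomous adaptation of the autonomous proof: one has to check carefully that the trivial Floquet exponent contributed by the $S^1$-factor neither violates the spectral-gap inequality defining $\Sigma(\mathcal{E})$ nor appears in the non-resonance count, and that the merely approximate resonances in~\eqref{eq:res-inner} and~\eqref{eq:res-forcing} can be handled as exact resonances when setting up the normal form, without affecting the existence, uniqueness, or smoothness conclusions. Once this is in place, the remainder is a routine transcription of the autonomous SSM construction to the extended system.
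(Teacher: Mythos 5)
Your proposal is correct in outline, but it takes a different (and much more laborious) route than the paper: the paper's entire proof is a one-sentence citation, observing that the statement is a restatement of Theorem~4 of Haller and Ponsioen~\cite{haller2016nonlinear}, which in turn rests on the parameterization-method results of Cabr\'e et al.~\cite{cabre2003parameterization-i} and Haro and de la Llave~\cite{haro2006parameterization}. What you have written is essentially a reconstruction of the argument \emph{behind} that citation, and it is faithful to it: the autonomization via $\dot{\phi}=\Omega$, the identification of $\{\boldsymbol{0}\}\times S^1$ as a normally hyperbolic invariant circle whose whisker over the subbundle $\mathcal{E}\times S^1$ is the sought SSM, and the order-by-order solution of the cohomological equations arising from the invariance equation~\eqref{eq:invariance} are exactly the ingredients of the cited proofs. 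Your observation that the near-resonances~\eqref{eq:res-inner} and~\eqref{eq:res-forcing} do not enter the outer non-resonance count but only dictate which monomials are absorbed into $\boldsymbol{R}_\epsilon$ is also the correct reading. One caveat: the passage from $\epsilon=0$ to $\epsilon>0$ is not quite ``smooth dependence of the autonomous SSM on a parameter,'' since the forcing destroys the equilibrium; what one actually uses is the persistence of the perturbed invariant torus together with its spectral submanifold in the quasi-periodic parameterization framework of Haro and de la Llave, which is precisely what the cited theorem packages. Since the paper delegates all of this to the literature, your sketch adds detail the paper deliberately omits; for the purposes of this paper, the citation suffices and nothing in your outline contradicts it.
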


\begin{proof}
This theorem is simply a restatement of Theorem 4 by Haller and Ponsioen~\cite{haller2016nonlinear}, which is based on more abstract results by Cabr\'e et al.~\cite{cabre2003parameterization-i,cabre2003parameterization-ii,cabre2005parameterization-iii} and Haro and de la Llave~\cite{haro2006parameterization,haro2006parameterization-num}.
\end{proof}

\section{Derivation of explicit responses amplitude}
\label{sec:appendix-th3}
In the case of polar coordinates, we substitute~\eqref{eq:polar-form} and obtain
\begin{align}
    \boldsymbol{q}^{\boldsymbol{c}}= & (\rho_1e^{\mathrm{i}(\theta_1+r_1\Omega t)})^{c_1}\cdots(\rho_me^{\mathrm{i}(\theta_m+r_m\Omega t)})^{c_m}\nonumber\\= &\boldsymbol{\rho}^{\boldsymbol{c}}e^{\mathrm{i}\boldsymbol{c}\cdot\boldsymbol{\theta}}e^{\mathrm{i}\boldsymbol{c}\cdot\boldsymbol{r}\Omega t},\\
    \bar{\boldsymbol{q}}^{\boldsymbol{d}}=& (\rho_1e^{-\mathrm{i}(\theta_1+r_1\Omega t)})^{d_1}\cdots(\rho_me^{-\mathrm{i}(\theta_m+r_m\Omega t)})^{d_m}\nonumber\\=&\boldsymbol{\rho}^{\boldsymbol{d}}e^{-\mathrm{i}\boldsymbol{d}\cdot\boldsymbol{\theta}}e^{-\mathrm{i}\boldsymbol{d}\cdot\boldsymbol{r}\Omega t},
\end{align}
and then $\boldsymbol{q}^{\boldsymbol{c}}\bar{\boldsymbol{q}}^{\boldsymbol{d}}=\boldsymbol{\rho}^{\boldsymbol{c}+\boldsymbol{d}}e^{\mathrm{i}(\boldsymbol{c}-\boldsymbol{d})\cdot\boldsymbol{\theta}}e^{\mathrm{i}(\boldsymbol{c}-\boldsymbol{d})\cdot\boldsymbol{r}\Omega t}$. Thus we have
\begin{align}
    \boldsymbol{W}(\boldsymbol{p})& =\sum_{(\boldsymbol{c},\boldsymbol{d})}\boldsymbol{w}_{(\boldsymbol{c},\boldsymbol{d})}\boldsymbol{q}^\mathrm{c}\bar{\boldsymbol{q}}^{\boldsymbol{d}}\nonumber\\
    & =\sum_{(\boldsymbol{c},\boldsymbol{d})}\boldsymbol{w}_{(\boldsymbol{c},\boldsymbol{d})}\boldsymbol{\rho}^{\boldsymbol{c}+\boldsymbol{d}}e^{\mathrm{i}(\boldsymbol{c}-\boldsymbol{d})\cdot\boldsymbol{\theta}}e^{\mathrm{i}(\boldsymbol{c}-\boldsymbol{d})\cdot\boldsymbol{r}\Omega t}\nonumber\\
    & =\sum_{\hat{r}_i}\boldsymbol{w}_{\hat{r}_i}^\mathrm{p}e^{\mathrm{i}{\hat{r}_i}\Omega t}
\end{align}
where
\begin{equation}
\label{eq:what-ri-p}
\boldsymbol{w}_{\hat{r}_i}^\mathrm{p}=\sum_{(\boldsymbol{c},\boldsymbol{d})\in\mathcal{T}_i}\boldsymbol{w}_{(\boldsymbol{c},\boldsymbol{d})}\boldsymbol{\rho}^{\boldsymbol{c}+\boldsymbol{d}}e^{\mathrm{i}(\boldsymbol{c}-\boldsymbol{d})\cdot\boldsymbol{\theta}}    
\end{equation}
with $\mathcal{T}_i=\{(\boldsymbol{c},\boldsymbol{d}):\hat{r}_i=(\boldsymbol{c}-\boldsymbol{d})\cdot\boldsymbol{r}\}$.

In the case of Cartesian coordinates, we substitute~\eqref{eq:cartesian-form} and obtain
\begin{equation}
    \boldsymbol{q}^{\boldsymbol{c}}=(q_{1,\mathrm{s}}e^{\mathrm{i}r_1\Omega t})^{c_1}\cdots(q_{m,\mathrm{s}}e^{\mathrm{i}r_m\Omega t})^{c_m}=\boldsymbol{q}_\mathrm{s}^{\boldsymbol{c}}e^{\mathrm{i}\boldsymbol{c}\cdot\boldsymbol{r}\Omega t},
\end{equation}
\begin{align}
    \bar{\boldsymbol{q}}^{\boldsymbol{d}} & =(\bar{q}_{1,\mathrm{s}}e^{-\mathrm{i}r_1\Omega t})^{d_1}\cdots(\bar{q}_{m,\mathrm{s}}e^{-\mathrm{i}r_m\Omega t})^{d_m}\nonumber\\
    & =\bar{\boldsymbol{q}}_\mathrm{s}^{\boldsymbol{d}}e^{-\mathrm{i}\boldsymbol{d}\cdot\boldsymbol{r}\Omega t},
\end{align}
and then $\boldsymbol{q}^{\boldsymbol{c}}\bar{\boldsymbol{q}}^{\boldsymbol{d}}=\boldsymbol{q}_\mathrm{s}^{\boldsymbol{c}}\bar{\boldsymbol{q}}_\mathrm{s}^{\boldsymbol{d}}e^{\mathrm{i}(\boldsymbol{c}-\boldsymbol{d})\cdot\boldsymbol{r}\Omega t}$. Thus we have
\begin{align}
    \boldsymbol{W}(\boldsymbol{p})& =\sum_{(\boldsymbol{c},\boldsymbol{d})}\boldsymbol{w}_{(\boldsymbol{c},\boldsymbol{d})}\boldsymbol{q}^\mathrm{c}\bar{\boldsymbol{q}}^{\boldsymbol{d}}\nonumber\\
    & =\sum_{(\boldsymbol{c},\boldsymbol{d})}\boldsymbol{w}_{(\boldsymbol{c},\boldsymbol{d})}\boldsymbol{q}_\mathrm{s}^{\boldsymbol{c}}\bar{\boldsymbol{q}}_\mathrm{s}^{\boldsymbol{d}}e^{\mathrm{i}(\boldsymbol{c}-\boldsymbol{d})\cdot\boldsymbol{r}\Omega t}\nonumber\\ &=\sum_{\hat{r}_i\in\hat{\mathcal{R}}}\boldsymbol{w}_{\hat{r}_i}^\mathrm{c}e^{\mathrm{i}{\hat{r}_i}\Omega t}
\end{align}
where
\begin{equation}
\label{eq:what-ri-c}
    \boldsymbol{w}_{r_i}^\mathrm{c}=\sum_{(\boldsymbol{c},\boldsymbol{d})\in\mathcal{T}_i}\boldsymbol{w}_{(\boldsymbol{c},\boldsymbol{d})}\boldsymbol{q}_\mathrm{s}^{\boldsymbol{c}}\bar{\boldsymbol{q}}_\mathrm{s}^{\boldsymbol{d}}.
\end{equation}

With TV SSM solution used, the periodic response~\eqref{eq:ssm-time-varying} is simplified as
\begin{equation}
\boldsymbol{z}(t)=\sum_{\hat{r}_i\in\hat{\mathcal{R}}}\hat{\boldsymbol{w}}_{\hat{r}_i}e^{\mathrm{i}{\hat{r}_i}\Omega t}
\end{equation}
where $\hat{\boldsymbol{w}}_{\hat{r}_i}$ is given by~\eqref{eq:what-ri}. Thus
\begin{equation}
    A_{\mathrm{opt}}=\sum_{\hat{r}_i}\hat{\boldsymbol{w}}_{\hat{r}_i,\mathrm{opt}}e^{\mathrm{i}{\hat{r}_i}\Omega t}
\end{equation}
Note that
\begin{align}
    &\quad \int_0^T \boldsymbol{z}_I^\ast(t)\boldsymbol{Q}\boldsymbol{z}_I(t)\mathrm{d}t \nonumber\\
    & = \int_0^T\left(\sum_{\hat{r}_i}\hat{\boldsymbol{w}}_{\hat{r}_i,I}^\ast e^{-\mathrm{i}{\hat{r}_i}\Omega t}\right)\boldsymbol{Q}\left(\sum_{\hat{r}_j}\hat{\boldsymbol{w}}_{\hat{r}_j,I} e^{\mathrm{i}{\hat{r}_j}\Omega t}\right)\mathrm{d}t\nonumber\\
    & =\sum_{\hat{r}_i}\sum_{\hat{r}_j}\hat{\boldsymbol{w}}_{\hat{r}_i,I}^\ast\boldsymbol{Q}\hat{\boldsymbol{w}}_{\hat{r}_j,I}\int_0^T e^{\mathrm{i}({\hat{r}_j}-{\hat{r}_i})\Omega t}\mathrm{d}t\nonumber\\
    & = T\sum_{\hat{r}_i}\hat{\boldsymbol{w}}_{\hat{r}_i,I}^\ast\boldsymbol{Q}\hat{\boldsymbol{w}}_{\hat{r}_i,I}
\end{align}
and hence
\begin{equation}
    A_{\mathcal{L}^2}=\sqrt{\sum_{\hat{r}_i}\hat{\boldsymbol{w}}_{\hat{r}_i,I}^\ast\boldsymbol{Q}\hat{\boldsymbol{w}}_{\hat{r}_i,I}}.
\end{equation}

\section{Derivation of explicit gradients}
\label{sec:appendix-grad}
\subsection{Gradients of $A_{\mathcal{L}^2}$}
\label{sec:appendix-th4}
The derivative of $A_{\mathcal{L}^2}$ is given by
\begin{align}
    \mathcal{D}A_{\mathcal{L}^2}&=\frac{1}{2A_{\mathcal{L}^2}}\sum_{\hat{r}_i\in\hat{\mathcal{R}}}\left(\mathcal{D}\hat{\boldsymbol{w}}_{\hat{r}_i,\mathcal{I}}^\ast\boldsymbol{Q}\hat{\boldsymbol{w}}_{\hat{r}_i,\mathcal{I}}+\hat{\boldsymbol{w}}_{\hat{r}_i,\mathcal{I}}^\ast\boldsymbol{Q}\mathcal{D}\hat{\boldsymbol{w}}_{\hat{r}_i,\mathcal{I}}\right)\nonumber\\&=\frac{1}{A_{\mathcal{L}^2}}\sum_{\hat{r}_i\in\hat{\mathcal{R}}}\hat{\boldsymbol{w}}_{\hat{r}_i,\mathcal{I}}^\ast\bar{\boldsymbol{Q}}\mathcal{D}\hat{\boldsymbol{w}}_{\hat{r}_i,I}\label{eq:DA-l2}
\end{align}
where $\bar{\boldsymbol{Q}}=(\boldsymbol{Q}+\boldsymbol{Q}^\top)/2$ and (see~\eqref{eq:what-ri})
\begin{align}
\label{eq:Dwhat}
    \mathcal{D}\hat{\boldsymbol{w}}_{\hat{r}_i} = & \mathcal{D}{\boldsymbol{w}}_{\hat{r}_i}^\mathrm{cor}+\left(\delta\epsilon\boldsymbol{x}_{\boldsymbol{0}}+\epsilon\delta\boldsymbol{x}_{\boldsymbol{0}}\right)\chi_1({\hat{r}_i})+\nonumber\\& \quad\left(\delta\epsilon\bar{\boldsymbol{x}}_{\boldsymbol{0}}+\epsilon\delta\bar{\boldsymbol{x}}_{\boldsymbol{0}}\right)\chi_{-1}({\hat{r}_i})
\end{align}
Here $\chi_1(\hat{r}_i)=1$ if $\hat{r}_i=1$ and $\chi_1(\hat{r}_i)=0$ otherwise. Likewise, $\chi_{-1}(\hat{r}_i)=1$ if $\hat{r}_i=-1$ and $\chi_{-1}(\hat{r}_i)=0$ otherwise. Based on~\eqref{eq:expphi-}, we have
\begin{equation}
-\mathrm{i}\delta\Omega\boldsymbol{B}\boldsymbol{x}_{\boldsymbol{0}}+(\boldsymbol{A}-\mathrm{i}\Omega\boldsymbol{B})\delta\boldsymbol{x}_{\boldsymbol{0}}=\boldsymbol{0}.
\end{equation}
From which we obtain
\begin{equation}
\label{eq:delta-x0}
    \delta\boldsymbol{x}_{\boldsymbol{0}}=\mathrm{i}(\boldsymbol{A}-\mathrm{i}\Omega\boldsymbol{B})^{-1}\boldsymbol{B}\boldsymbol{x}_{\boldsymbol{0}}\delta\Omega.
\end{equation}
For polar coordinate representation, we have~\eqref{eq:what-ri-p} and thus
\begin{align}
\mathcal{D}\boldsymbol{w}_{\hat{r}_i}^\mathrm{p}=&\sum_{(\boldsymbol{c},\boldsymbol{d})\in\mathcal{T}_i}\boldsymbol{w}_{(\boldsymbol{c},\boldsymbol{d})}\cdot\nonumber\\&\quad\left(\mathcal{D}\boldsymbol{\rho}^{\boldsymbol{c}+\boldsymbol{d}}+\mathrm{i}\boldsymbol{\rho}^{\boldsymbol{c}+\boldsymbol{d}}(\boldsymbol{c}-\boldsymbol{d})\cdot\delta\boldsymbol{\theta}\right)e^{\mathrm{i}(\boldsymbol{c}-\boldsymbol{d})\cdot\boldsymbol{\theta}}\label{eq:Dwp}
\end{align}
where
\begin{equation}
\mathcal{D}\boldsymbol{\rho}^{\boldsymbol{c}+\boldsymbol{d}}=\boldsymbol{\rho}^{\boldsymbol{c}+\boldsymbol{d}}\sum_{j=1}^m\frac{c_j+d_j}{\rho_j}\delta\rho_j.
\end{equation}
For Cartesian coordinates, we have~\eqref{eq:what-ri-c} and thus
\begin{equation}
\label{eq:Dwc}
\mathcal{D}\boldsymbol{w}_{r_i}^\mathrm{c}=\sum_{(\boldsymbol{c},\boldsymbol{d})\in\mathcal{T}_i}\boldsymbol{w}_{(\boldsymbol{c},\boldsymbol{d})}(\mathcal{D}\boldsymbol{q}_\mathrm{s}^{\boldsymbol{c}}\bar{\boldsymbol{q}}_\mathrm{s}^{\boldsymbol{d}}+\boldsymbol{q}_\mathrm{s}^{\boldsymbol{c}}\mathcal{D}\bar{\boldsymbol{q}}_\mathrm{s}^{\boldsymbol{d}}),
\end{equation}
where
\begin{gather}
\mathcal{D}\boldsymbol{q}_\mathrm{s}^{\boldsymbol{c}}=\boldsymbol{q}_\mathrm{s}^{\boldsymbol{c}}\sum_{j=1}^m\frac{c_j}{q_{j,\mathrm{s}}}\delta(q_{j,\mathrm{s}}^\mathrm{R}+\mathrm{i}q_{j,\mathrm{s}}^\mathrm{I}),\\ \mathcal{D}\bar{\boldsymbol{q}}_\mathrm{s}^{\boldsymbol{d}}=\bar{\boldsymbol{q}}_\mathrm{s}^{\boldsymbol{d}}\sum_{j=1}^m\frac{d_j}{\bar{q}_{j,\mathrm{s}}}\delta(q_{j,\mathrm{s}}^\mathrm{R}-\mathrm{i}q_{j,\mathrm{s}}^\mathrm{I}).\label{eq:dqcs}
\end{gather}
Substitution~\eqref{eq:Dwhat},~\eqref{eq:delta-x0}-\eqref{eq:dqcs} into~\eqref{eq:DA-l2} yields the gradient below
\begin{align}
    \frac{\partial A_{\mathcal{L}^2}}{\partial\rho_j}=&\frac{1}{A_{\mathcal{L}^2}}\sum_{\hat{r}_i\in\hat{\mathcal{R}}}\hat{\boldsymbol{w}}_{\hat{r}_i,\mathcal{I}}^{\ast}\bar{\boldsymbol{Q}}\cdot\nonumber\\
&\qquad\sum_{(\boldsymbol{c},\boldsymbol{d})\in\mathcal{T}_i}\boldsymbol{w}_{(\boldsymbol{c},\boldsymbol{d})}^{\mathcal{I}}\boldsymbol{\rho}^{\boldsymbol{c}+\boldsymbol{d}}e^{\mathrm{i}(\boldsymbol{c}-\boldsymbol{d})\cdot\boldsymbol{\theta}}\frac{c_j+d_j}{\rho_j},\label{eq:Al2-rho}
\end{align}
\begin{align}
    \frac{\partial A_{\mathcal{L}^2}}{\partial\theta_j}=&\frac{\mathrm{i}}{A_{\mathcal{L}^2}}\sum_{\hat{r}_i\in\hat{\mathcal{R}}}\hat{\boldsymbol{w}}_{\hat{r}_i,\mathcal{I}}^{\ast}\bar{\boldsymbol{Q}}\cdot\nonumber\\
&\sum_{(\boldsymbol{c},\boldsymbol{d})\in\mathcal{T}_i}\boldsymbol{w}_{(\boldsymbol{c},\boldsymbol{d})}^\mathcal{I}\boldsymbol{\rho}^{\boldsymbol{c}+\boldsymbol{d}}e^{\mathrm{i}(\boldsymbol{c}-\boldsymbol{d})\cdot\boldsymbol{\theta}}(c_j-d_j),
\end{align}
\begin{align}
    \frac{\partial A_{\mathcal{L}^2}}{\partial q_{j,\mathrm{s}}^\mathrm{R}}=&\frac{1}{A_{\mathcal{L}^2}}\sum_{\hat{r}_i\in\hat{\mathcal{R}}}\hat{\boldsymbol{w}}_{\hat{r}_i,\mathcal{I}}^{\ast}\bar{\boldsymbol{Q}}\cdot\nonumber\\
&\qquad\sum_{(\boldsymbol{c},\boldsymbol{d})\in\mathcal{T}_i}\boldsymbol{w}_{(\boldsymbol{c},\boldsymbol{d})}^{\mathcal{I}}\boldsymbol{q}_\mathrm{s}^{\boldsymbol{c}}\bar{\boldsymbol{q}}_\mathrm{s}^{\boldsymbol{d}}\left(\frac{c_j}{q_{j,\mathrm{s}}}+\frac{d_j}{\bar{q}_{j,\mathrm{s}}}\right),
\end{align}
\begin{align}
    \frac{\partial A_{\mathcal{L}^2}}{\partial q_{j,\mathrm{s}}^\mathrm{I}}=&\frac{\mathrm{i}}{A_{\mathcal{L}^2}}\sum_{\hat{r}_i\in\hat{\mathcal{R}}}\hat{\boldsymbol{w}}_{\hat{r}_i,\mathcal{I}}^{\ast}\bar{\boldsymbol{Q}}\cdot\nonumber\\&\sum_{(\boldsymbol{c},\boldsymbol{d})\in\mathcal{T}_i}\boldsymbol{w}_{(\boldsymbol{c},\boldsymbol{d})}^\mathcal{I}\boldsymbol{q}_\mathrm{s}^{\boldsymbol{c}}\bar{\boldsymbol{q}}_\mathrm{s}^{\boldsymbol{d}}\left(\frac{c_j}{q_{j,\mathrm{s}}}-\frac{d_j}{\bar{q}_{j,\mathrm{s}}}\right),
\end{align}
for $j=1,\cdots,m$ and
\begin{gather}
    \frac{\partial A_{\mathcal{L}^2}}{\partial\Omega}=\frac{\epsilon}{A_{\mathcal{L}^2}}\left(\hat{\boldsymbol{w}}_{1,\mathcal{I}}^{\ast}\bar{\boldsymbol{Q}}\mathrm{i}\left((\boldsymbol{A}-\mathrm{i}\Omega\boldsymbol{B})^{-1}\boldsymbol{B}\boldsymbol{x}_{\boldsymbol{0}}\right)_\mathcal{I}+\mathrm{cc}\right),\label{eq:Al2-om}\\
    \frac{\partial A_{\mathcal{L}^2}}{\partial\epsilon}=\frac{1}{A_{\mathcal{L}^2}}\left(\hat{\boldsymbol{w}}_{1,\mathcal{I}}^{\ast}\bar{\boldsymbol{Q}}\boldsymbol{x}_{\boldsymbol{0},\mathcal{I}}+\hat{\boldsymbol{w}}_{-1,\mathcal{I}}^{\ast}\bar{\boldsymbol{Q}}\bar{\boldsymbol{x}}_{\boldsymbol{0},\mathcal{I}}\right)\label{eq:Al2-eps},
\end{gather}
where $\bar{\boldsymbol{Q}}=(\boldsymbol{Q}+\boldsymbol{Q}^\top)/2$, $\boldsymbol{w}_{(\boldsymbol{c},\boldsymbol{d})}^\mathcal{I}\in\mathbb{C}^{|\mathcal{I}|}$ extracts the corresponding entries from the vector $\boldsymbol{w}_{(\boldsymbol{c},\boldsymbol{d})}$, and {cc} denotes the corresponding complex conjugate part.

\subsection{Gradients of $A_{\mathrm{opt}}$}
\label{sec:appendix-th5}
The derivative of $A_{\mathrm{opt}}$ is given by
\begin{align}   \mathcal{D}A_{\mathrm{opt}}=&\sum_{\hat{r}_i\in\hat{\mathcal{R}}}(\mathcal{D}\hat{\boldsymbol{w}}_{\hat{r}_i,\mathrm{opt}}+\mathrm{i}\hat{r}_i\Omega\hat{\boldsymbol{w}}_{\hat{r}_i,\mathrm{opt}}\mathcal{D}t+\nonumber\\&\qquad\mathrm{i}\hat{r}_i t\hat{\boldsymbol{w}}_{\hat{r}_i,\mathrm{opt}}\mathcal{D}\Omega)e^{\mathrm{i}{\hat{r}_i}\Omega t}.\label{eq:DAopt}
\end{align}
Substitution~\eqref{eq:Dwhat},~\eqref{eq:delta-x0}-\eqref{eq:dqcs} into~\eqref{eq:DAopt} yields the gradient below
\begin{gather}
    \frac{\partial A_\mathrm{opt}}{\partial\rho_j}=\sum_{\hat{r}_i\in\hat{\mathcal{R}}}e^{\mathrm{i}\hat{r}_i\Omega t}\sum_{(\boldsymbol{c},\boldsymbol{d})\in\mathcal{T}_i}\boldsymbol{w}_{(\boldsymbol{c},\boldsymbol{d})}^{\mathrm{opt}}\boldsymbol{\rho}^{\boldsymbol{c}+\boldsymbol{d}}e^{\mathrm{i}(\boldsymbol{c}-\boldsymbol{d})\cdot\boldsymbol{\theta}}\frac{c_j+d_j}{\rho_j},\label{eq:aopt-rhoj}\\
    \frac{\partial A_\mathrm{opt}}{\partial\theta_j}={\mathrm{i}}\sum_{\hat{r}_i\in\hat{\mathcal{R}}}e^{\mathrm{i}\hat{r}_i\Omega t}\sum_{(\boldsymbol{c},\boldsymbol{d})\in\mathcal{T}_i}\boldsymbol{w}_{(\boldsymbol{c},\boldsymbol{d})}^{\mathrm{opt}}\boldsymbol{\rho}^{\boldsymbol{c}+\boldsymbol{d}}e^{\mathrm{i}(\boldsymbol{c}-\boldsymbol{d})\cdot\boldsymbol{\theta}}(c_j-d_j),\\
    \frac{\partial A_\mathrm{opt}}{\partial q_{j,\mathrm{s}}^\mathrm{R}}=\sum_{\hat{r}_i\in\hat{\mathcal{R}}}e^{\mathrm{i}\hat{r}_i\Omega t}\sum_{(\boldsymbol{c},\boldsymbol{d})\in\mathcal{T}_i}\boldsymbol{w}_{(\boldsymbol{c},\boldsymbol{d})}^{\mathrm{opt}}\boldsymbol{q}_\mathrm{s}^{\boldsymbol{c}}\bar{\boldsymbol{q}}_\mathrm{s}^{\boldsymbol{d}}\left(\frac{c_j}{q_{j,\mathrm{s}}}+\frac{d_j}{\bar{q}_{j,\mathrm{s}}}\right),\\
    \frac{\partial A_\mathrm{opt}}{\partial q_{j,\mathrm{s}}^\mathrm{I}}=\sum_{\hat{r}_i\in\hat{\mathcal{R}}}e^{\mathrm{i}\hat{r}_i\Omega t}\sum_{(\boldsymbol{c},\boldsymbol{d})\in\mathcal{T}_i}\boldsymbol{w}_{(\boldsymbol{c},\boldsymbol{d})}^{\mathrm{opt}}\boldsymbol{q}_\mathrm{s}^{\boldsymbol{c}}\bar{\boldsymbol{q}}_\mathrm{s}^{\boldsymbol{d}}\left(\frac{c_j}{q_{j,\mathrm{s}}}-\frac{d_j}{\bar{q}_{j,\mathrm{s}}}\right)
\end{gather}
for $j=1,\cdots,m$ and
\begin{align}
       \frac{\partial A_\mathrm{opt}}{\partial\Omega}=&\sum_{\hat{r}_i\in\hat{\mathcal{R}}}e^{\mathrm{i}\hat{r}_i\Omega t}\mathrm{i}\hat{r}_i t\hat{\boldsymbol{w}}_{\hat{r}_i,\mathrm{opt}}+\nonumber\\&\quad{\epsilon}\left(\mathrm{i}\left((\boldsymbol{A}-\mathrm{i}\Omega\boldsymbol{B})^{-1}\boldsymbol{B}\boldsymbol{x}_{\boldsymbol{0}}\right)_{\mathrm{opt}}e^{\mathrm{i}\Omega t}+\mathrm{cc}\right),\label{eq:Alinf-om} 
\end{align}
\begin{gather}
    \frac{\partial A_\mathrm{opt}}{\partial\epsilon}=\boldsymbol{x}_{\boldsymbol{0},\mathrm{opt}}e^{\mathrm{i}\Omega t}+\bar{\boldsymbol{x}}_{\boldsymbol{0},\mathrm{opt}}e^{-\mathrm{i}\Omega t},\label{eq:Alinf-eps}\\
    \frac{\partial A_\mathrm{opt}}{\partial t}=\sum_{\hat{r}_i\in\hat{\mathcal{R}}}e^{\mathrm{i}\hat{r}_i\Omega t}\mathrm{i}\hat{r}_i \Omega\hat{\boldsymbol{w}}_{\hat{r}_i,\mathrm{opt}}\label{eq:aopt-t},
\end{gather}
where $\boldsymbol{w}_{(\boldsymbol{c},\boldsymbol{d})}^\mathrm{opt}\in\mathbb{C}$ extracts the corresponding entry from the vector $\boldsymbol{w}_{(\boldsymbol{c},\boldsymbol{d})}$.

The derivatives of $A_{\mathcal{L}^2}$ and $A_\mathrm{opt}$ with respect to $\Omega$ involves inversion of the matrix $\boldsymbol{A}-\mathrm{i}\Omega\boldsymbol{B}\in\mathbb{C}^{N\times N}$ , as seen in~\eqref{eq:Al2-om} and~\eqref{eq:Alinf-om}. This inversion must be performed in each iteration, as the matrix is $\Omega$-dependent. Hence, the associated computational cost can be very significant if $n\gg1$. We note that this inversion is a result of the non-autonomous part of the SSM, as seen in~\eqref{eq:ssm-time-varying} and~\eqref{eq:expphi-}, and this matrix inversion is not required if we adopt the TI SSM solution~\eqref{eq:ssm-time-independent}. Thus, it is worth checking if the difference between the TV SSM solutions and the corresponding TI SSM solutions can be ignored.

% Let $\hat{\mathcal{R}}=\{\hat{r}_i\}$, we know that $r_i\in\hat{\mathcal{R}}$ for $i=1,\cdots,m$ because linear part is included in $\boldsymbol{W}(\boldsymbol{p})$. Therefore, $r_\mathrm{d}$ also defines the largest common divisor for the set of rational numbers $\hat{\mathcal{R}}$. 

\bibliography{manuscript.bbl} 
\bibliographystyle{ieeetr}

\section*{Declarations}
\section*{Conflict of interest}
The authors declare that they have no conflict of interest.

\section*{Author contributions}
All authors contributed equally to the conception of this paper. Mingwu Li was lead on the formal analysis and code development with Shobhit Jain and George Haller in supporting roles. George Haller supervised this project. The first draft of the manuscript was written by Mingwu Li and Shobhit Jain. All authors contributed to the review and editing of the text.

\section*{Data availability}
The code used to generate the numerical results included in this paper is available as part of the open-source package SSMTool 2.5 at \url{https://zenodo.org/records/10018285}.

\end{document}